\newtheorem{thm}{Theorem}[section]
\newtheorem*{theorem*}{Theorem}
\newtheorem*{acknowledgement*}{Acknowledgment}
\newtheorem{cor}[thm]{Corollary}
\newtheorem{lem}[thm]{Lemma}
\newtheorem{prop}[thm]{Proposition}
\theoremstyle{definition}
\theoremstyle{remark}
\newtheorem{rem}[thm]{Remark}
\newtheorem{ques}[thm]{Question}
\newtheorem{conj}[thm]{Conjecture}
\numberwithin{equation}{section}
\newcommand{\set}[1]{\left\{#1\right\}}
\newcommand{\Real}{\mathbb R}
\newcommand{\vte}{virtual entropy}
\newcommand{\WW}[2]{W^{#1}_{#2}}
\title[Conformal Volume and Entropy of Self-Shrinkers]{Li-Yau Conformal Volume and Colding-Minicozzi Entropy of Self-Shrinkers}
\author{Jacob Bernstein}
\address{Department of Mathematics, Johns Hopkins University, 3400 N. Charles Street, Baltimore, MD 21218}
\email{bernstein@math.jhu.edu}
\thanks{The author was partially supported by the NSF Grant  DMS-2203132.  The author would like to thank Jeffrey Case for a stimulating discussion that lead to this work and Letian Chen for helpful comments on an earlier draft.}
\begin{document}

\begin{abstract}
We show the (normalized) Li-Yau conformal volume of a self-shrinker of mean curvature flow in Euclidean space bounds its Colding-Minicozzi entropy from below.  This bound is independent of codimension and sharp on planes.  As an application we verify a conjecture of Colding-Minicozzi about the entropy of closed self-shrinkers of arbitrary codimension for self-shrinkers that are topologically two-dimensional real projective planes.  As part of the proof we introduce two auxiliary functionals which we call \emph{stable conformal volume} and \emph{\vte} which should be of independent interest. 
\end{abstract}
\maketitle
\section{Introduction}
In \cite{liNewConformalInvariant1982a}, Li-Yau introduced a quantity associated to branched conformal immersions into $\mathbb{S}^N$ they called the ($N$-)\emph{conformal volume} -- this quantity is also called \emph{visual volume} by Gromov \cite{gromovFillingRiemannianManifolds1983}.  Among other applications, it has proven to be of significant importance both in the study of the Willmore energy and spectral properties of surfaces.  There is a vast literature on this quantity -- we refer in particular to \cite{bryantSurfacesConformalGeometry1988} for helpful background discussion.  In this paper, we relate the conformal volume to another important geometric quantity introduced in \cite{Coldinga} called  \emph{Colding-Minicozzi entropy} or \emph{entropy} for short, which has proven important in the study of mean curvature flow.

Consider the inverse stereographic projection map
\begin{equation}\label{SteroEqn}
\mathcal{S}:\Real^N \to \mathbb{S}^{N}\subset \Real^{N+1}, \mathbf{x}\mapsto \left( \frac{\mathbf{x}}{1+ \frac{1}{4}|\mathbf{x}|^2}, \frac{1-\frac{1}{4}|\mathbf{x}|^2}{1+\frac{1}{4}|\mathbf{x}|^2}\right).
\end{equation}
For an $n$-dimensional submanifold, $\Sigma \subset \Real^N$ and embedding $j:M^n \to \Sigma\subset \Real^N$ define
$$
\lambda_{LY}^n[\Sigma]=V_{c}(N, \mathcal{S}\circ j)
$$
where $V_{c}(N, \mathcal{S}\circ j)$ is the $N$-conformal volume of the embedding $\mathcal{S}\circ j:M \to \mathcal{S}(\Sigma)\subset \mathbb{S}^N$ introduced in \cite{liNewConformalInvariant1982a}. A straightforward computation yields, 
$$
\lambda_{LY}^n[\Sigma]=\sup_{\rho>0, \mathbf{x}_0\in \Real^N} \int_{\Sigma} \WW{ n}{\rho}(\mathbf{x}-\mathbf{x}_0) d\mathcal{H}^n(\mathbf{x}).
$$
where the weights $\WW{ n}{\rho}$ are given explicitly by
$$
\WW{ n}{\rho}(\mathbf{x})=  \frac{\rho^n}{(1+\frac{1}{4}\rho^2|\mathbf{x}|^2)^{n}}.
$$
These are the volume element on $\Sigma$ induced by homothetic scalings of $\mathcal{S}^* g_{\mathbb{S}}$, i.e.,  the pull-back of the round metric on the sphere. Denote the \emph{normalized} conformal volume by
$$
\bar{\lambda}_{LY}^n[\Sigma]=|\mathbb{S}^n|^{-1} \lambda_{LY}^n[\Sigma].
$$
When $\Sigma$ is an affine $n$-plane, $\Real^n$ or round $n$-sphere, $\mathbb{S}^n$,  $\bar{\lambda}_{LY}^n[\Sigma]=1$.

Following \cite{Coldinga}, for a $n$-dimensional submanifold $\Sigma\subset \Real^N$, the entropy of $\Sigma$ is the value
$$
\lambda_{CM}^n[\Sigma]=\sup_{\rho>0, \mathbf{x}_0\in \Real^N} (4\pi \rho)^{-\frac{n}{2}} \int_{\Sigma} e^{-\frac{|\mathbf{x}-\mathbf{x}_0|^2}{4\rho}} d\mathcal{H}^n.
$$
This quantity is monotone non-increasing along mean curvature flows and has found many applications both in the study of mean curvature flow and as a measure of complexity, especially of closed hypersurfaces, e.g.,  \cite{BernsteinWang1, BernsteinWang2, BernsteinWang3,BWIsotopy, BWDensity, CCMS, CMS, JZhu}. For affine $n$-planes, $\Real^n$, the value is one, but is strictly larger for all other proper submanifolds -- see \cite{chenRigidityStabilitySubmanifolds2021, BernsteinRigidity}.
While there has been extensive study of the entropy of hypersurfaces, comparatively little is known about it in higher codimension -- see however recent work of Colding-Minicozzi \cite{coldingComplexityParabolicSystems2020, coldingEntropyCodimensionBounds2019}.

One readily observes -- see Proposition \ref{ConfVolCMentProp} --  that
$$
(4\pi \rho)^{-\frac{n}{2}} e^{-\frac{|\mathbf{x}|^2}{4\rho}}\leq  \left( \frac{n}{4\pi}\right)^{\frac{n}{2}} \WW{ n}{ (n\rho)^{-\frac{1}{2}}}(\mathbf{x})
$$
which leads to the non-sharp bound
$$
\lambda_{CM}^n[\Sigma]\leq \left( \frac{n}{4\pi}\right)^{\frac{n}{2}} |\mathbb{S}^n| \bar{\lambda}_{LY}^n[\Sigma].
$$
Note that asymptotically the constant behaves like
$$
\left( \frac{n}{4\pi}\right)^{\frac{n}{2}} |\mathbb{S}^n|=\sqrt{2} \left( \frac{e}{2}\right)^{\frac{n}{2}}+l.o.t, n\to \infty.
$$
which grows rapidly as $n$ grows.

We introduce two auxiliary measures of complexity of submanifolds and then relate the new quantities with the conformal volume and entropy. These relationships are particularly striking when the submanifold, $\Sigma\subset \Real^N$, is a  \emph{self-shrinker}.  That is, satisfies
$$
\mathbf{H}_\Sigma+\frac{\mathbf{x}^\perp}{2}=\mathbf{0}
$$
where $\mathbf{H}_\Sigma$ is the mean curvature vector and $\mathbf{x}^\perp$ is the normal component of the position vector.  Such submanifolds are important in the study of mean curvature flow, i.e., in the geometric evolution equation
$$
t\in I\mapsto \Sigma_t \subset \Real^{N}, \left(\frac{\partial \mathbf{x}}{\partial t}\right)^\perp = \mathbf{H}_{\Sigma_t}.
$$
Here $I$ is an interval of time and $t\mapsto \Sigma_t$ is a smooth evolution of $n$-dimensional submanifolds.
This is because for a self-shrinker $\Sigma$, 
$$t\in (-\infty, 0)\mapsto \sqrt{-t} \Sigma$$
 is an evolution by mean curvature flow that provides a model of singularity formation of the flow. As $\lambda_{CM}^n$ is invariant under rescaling, it is constant along this flow. 
 \begin{thm}\label{LYCMIneqThm}
 	Let $\Sigma\subset \Real^N$ be a $n$-dimensional properly embedded submanifold that is, up to translation and dilation, a self-shrinker.  One has
 	$$
1\leq \bar{\lambda}_{LY}^n[\Sigma]\leq \lambda_{CM}^n[\Sigma].
 	$$
 \end{thm}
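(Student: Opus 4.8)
In the approach I have in mind the two bounds are proved separately, and neither uses the shrinker equation directly; the shrinker hypothesis enters only to guarantee that the quantities involved are finite (a properly embedded self-shrinker has polynomial volume growth, so $\lambda_{CM}^n[\Sigma]<\infty$ and hence $\bar\lambda_{LY}^n[\Sigma]<\infty$). For the lower bound $1\le\bar\lambda_{LY}^n[\Sigma]$, fix any $\mathbf{x}_0\in\Sigma$ and rescale:
\[
\int_\Sigma\WW{n}{\rho}(\mathbf{x}-\mathbf{x}_0)\,d\mathcal{H}^n=\int_{\rho(\Sigma-\mathbf{x}_0)}\frac{d\mathcal{H}^n(\mathbf{y})}{(1+\tfrac14|\mathbf{y}|^2)^n}.
\]
As $\rho\to\infty$ the rescaled submanifolds $\rho(\Sigma-\mathbf{x}_0)$ converge in $C^1_{\mathrm{loc}}$ to the tangent plane $T_{\mathbf{x}_0}\Sigma$ with multiplicity one, so for each fixed $R>0$ the integral over $\rho(\Sigma-\mathbf{x}_0)\cap B_R$ tends to the same integral over $T_{\mathbf{x}_0}\Sigma\cap B_R$; letting $\rho\to\infty$ and then $R\to\infty$ gives $\liminf_{\rho\to\infty}\int_\Sigma\WW{n}{\rho}(\mathbf{x}-\mathbf{x}_0)\,d\mathcal{H}^n\ge\int_{T_{\mathbf{x}_0}\Sigma}(1+\tfrac14|\mathbf{y}|^2)^{-n}\,d\mathcal{H}^n=|\mathbb{S}^n|$, which is the claim after normalizing.

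For the upper bound the idea is to realize each conformal weight as a continuous superposition of the Gaussian weights defining the entropy. Applying $a^{-n}=\Gamma(n)^{-1}\int_0^\infty s^{n-1}e^{-as}\,ds$ with $a=1+\tfrac14\rho^2|\mathbf{x}-\mathbf{x}_0|^2$ gives
\[
\WW{n}{\rho}(\mathbf{x}-\mathbf{x}_0)=\frac{\rho^n}{\Gamma(n)}\int_0^\infty s^{n-1}e^{-s}\,e^{-\frac14 s\rho^2|\mathbf{x}-\mathbf{x}_0|^2}\,ds.
\]
Integrating over $\Sigma$, interchanging the two integrations (Tonelli; all integrands are nonnegative), and estimating the inner integral at scale $\tau=(s\rho^2)^{-1}$ directly from the definition of the entropy,
\[
\int_\Sigma e^{-\frac14 s\rho^2|\mathbf{x}-\mathbf{x}_0|^2}\,d\mathcal{H}^n=(4\pi\tau)^{\frac n2}\Big[(4\pi\tau)^{-\frac n2}\!\int_\Sigma e^{-\frac{|\mathbf{x}-\mathbf{x}_0|^2}{4\tau}}\,d\mathcal{H}^n\Big]\le\Big(\tfrac{4\pi}{s\rho^2}\Big)^{\frac n2}\lambda_{CM}^n[\Sigma],
\]
one is left with $\int_\Sigma\WW{n}{\rho}(\mathbf{x}-\mathbf{x}_0)\,d\mathcal{H}^n\le\frac{(4\pi)^{n/2}}{\Gamma(n)}\big(\int_0^\infty s^{n/2-1}e^{-s}\,ds\big)\lambda_{CM}^n[\Sigma]=\frac{(4\pi)^{n/2}\Gamma(n/2)}{\Gamma(n)}\lambda_{CM}^n[\Sigma]$. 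The satisfying point is that the Legendre duplication formula collapses the constant exactly, $(4\pi)^{n/2}\Gamma(n/2)/\Gamma(n)=2\pi^{(n+1)/2}/\Gamma(\tfrac{n+1}{2})=|\mathbb{S}^n|$; taking the supremum over $\rho>0$ and $\mathbf{x}_0\in\Real^N$ — which, by the identification of $\lambda_{LY}^n[\Sigma]$ with $\sup_{\rho,\mathbf{x}_0}\int_\Sigma\WW{n}{\rho}(\mathbf{x}-\mathbf{x}_0)\,d\mathcal{H}^n$ recorded above, is the full conformal supremum — and dividing by $|\mathbb{S}^n|$ yields $\bar\lambda_{LY}^n[\Sigma]\le\lambda_{CM}^n[\Sigma]$.

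I expect the real content of the paper's argument, and the reason for the auxiliary functionals \emph{stable conformal volume} and \emph{\vte}, to be in treating the intermediate object $\frac{\rho^n}{\Gamma(n)}\int_0^\infty s^{n-1}e^{-s}\int_\Sigma e^{-\frac14 s\rho^2|\mathbf{x}-\mathbf{x}_0|^2}\,d\mathcal{H}^n\,ds$ — the conformal weight paired with the $\Gamma$-mixture but \emph{before} the entropy bound — as a functional in its own right, which for a self-shrinker can instead be estimated by specializing to $(\rho,\mathbf{x}_0)=(1,\mathbf{0})$, the pair at which the entropy of a shrinker is attained; this should be what pins down the sharp constant and yields the rigidity used in the $\mathbb{RP}^2$ application. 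The one genuine hazard I anticipate is integrability: the superposition identity, the use of Tonelli, and the blow-up limit all require $\lambda_{CM}^n[\Sigma]<\infty$, i.e.\ polynomial volume growth of $\Sigma$, and it is precisely there — not in any of the inequalities themselves — that properness of the embedding together with the shrinker equation is used; for an arbitrary properly embedded submanifold of infinite entropy the upper bound is vacuous, so restricting to shrinkers is what gives the statement content.
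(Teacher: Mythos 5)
Your proof is correct, and it is a genuinely different and considerably more elementary route than the paper's. The paper proves the upper bound by chaining $\bar{\lambda}_{LY}^n[\Sigma]\leq\bar{\lambda}_{LY}^\infty[\Sigma]\leq\lambda_V^n[\Sigma]$ (Proposition~\ref{StabConfVolMonDimProp} and Theorem~\ref{CMLYVTIneqThm}), and then uses the shrinker hypothesis --- via the self-similar mean curvature flow $t\mapsto\sqrt{-t}\,\Sigma$, the Huisken--Magni--Mantegazza monotonicity of $\lambda_V^n$ (Theorem~\ref{LCMonThm}), and the matrix Harnack inequality --- to conclude $\lambda_V^n[\Sigma]=\lambda_{CM}^n[\Sigma]$ in Theorem~\ref{SSVTCMThm}. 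Your argument instead writes the conformal density as a Gamma-mixture of Gaussians,
\[
\WW{n}{\rho}(\mathbf{x}-\mathbf{x}_0)=\frac{\rho^n}{\Gamma(n)}\int_0^\infty s^{n-1}e^{-s}e^{-\frac{s\rho^2}{4}|\mathbf{x}-\mathbf{x}_0|^2}\,ds,
\]
bounds each Gaussian layer by $(4\pi\tau)^{n/2}\lambda_{CM}^n[\Sigma]$ at scale $\tau=(s\rho^2)^{-1}$, and the Legendre duplication formula collapses the constant to exactly $|\mathbb{S}^n|$. This is clean, needs only Tonelli, and --- worth emphasizing --- does not use the shrinker equation at all: it proves $\bar{\lambda}_{LY}^n[\Sigma]\leq\lambda_{CM}^n[\Sigma]$ for \emph{every} proper $n$-submanifold of finite entropy (and the bound is vacuously true otherwise), whereas the paper's route produces this only as the terminal equality case of a chain that is sharp precisely for shrinkers. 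Both arguments are tight on planes. What your argument does not recover is the broader structure the paper is after: the inequalities $\lambda_{CM}^n\leq\bar{\lambda}_{LY}^\infty\leq\lambda_V^n$ with equality for shrinkers, the monotonicity of $\lambda_V^n$ under mean curvature flow, and the behaviour of the stabilized conformal volume, all of which are presented as of independent interest.

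Your closing speculation is, however, off the mark as a guess at the paper's mechanism. The auxiliary functionals are not the Gamma-mixture intermediate object: \emph{stable conformal volume} is the increasing limit $\lim_m\bar{\lambda}_{LY}^{n+2m}[\Sigma\times\Real^{2m}]$, produced by taking products with Euclidean factors (Lemma~\ref{IterateLem} and Proposition~\ref{StableLYIterateProp}), and \emph{virtual entropy} is built from the class $\mathcal{VT}_1^+(T)$ of log-concave densities via Hamilton's matrix Harnack inequality --- neither involves a Gamma superposition. The paper's modified weights $\hat{W}^n_{n+m,\rho}$, which decrease pointwise in $m$ to $(4\pi\rho)^{-n/2}e^{-|\mathbf{x}|^2/(4\rho)}$ (Proposition~\ref{ConfVolCMentProp}), play a role loosely analogous to your mixture, but they come from iterated fibre integration rather than a one-parameter Gaussian decomposition. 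Also, you write that ``restricting to shrinkers is what gives the statement content'': your own argument shows the inequality holds with full force for any finite-entropy submanifold, so the restriction in the theorem reflects the paper's method and the context (self-shrinkers), not a genuine obstruction.
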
  The  inequality of Theorem \ref{LYCMIneqThm} is sharp in the sense that equality holds throughout on affine $n$-planes and by \cite{chenRigidityStabilitySubmanifolds2021} this double equality characterizes these planes.  When $n\geq 2$ planes and round spheres are characterized by equality in the first inequality by \cite{bryantSurfacesConformalGeometry1988}, though for round spheres the second inequality is strict.  While they are not self-shrinkers, certain double logarithmic spirals provide examples with equality in the second inequality but not the first -- see Appendix \ref{LoxodromeSec}.

In \cite[pg. 119]{coldingComplexityParabolicSystems2020}, Colding-Minicozzi made the following conjecture:
\begin{conj} \label{CMConj} If $\Sigma\subset \Real^N$ is a closed $n$-dimensional self-shrinker, then 
	$$\sqrt{2}=\lim_{n\to \infty} \left(\frac{2n}{4\pi e}\right)^{\frac{n}{2}}|\mathbb{S}^n|<\left(\frac{2n}{4\pi e}\right)^{\frac{n}{2}}|\mathbb{S}^n|=\lambda_{CM}^n[\mathbb{S}^n]\leq \lambda_{CM}^n[\Sigma].$$
\end{conj}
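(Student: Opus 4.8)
The plan is to deduce the conjectured inequality from Theorem \ref{LYCMIneqThm} together with a topological lower bound for the Li--Yau conformal volume. A closed self-shrinker $\Sigma$ is automatically properly embedded and is (trivially) a self-shrinker up to translation and dilation, so Theorem \ref{LYCMIneqThm} gives
$$
\lambda_{CM}^n[\Sigma] \geq \bar{\lambda}_{LY}^n[\Sigma] = |\mathbb{S}^n|^{-1} V_c(N,\mathcal{S}\circ j),
$$
where $j\colon M\to\Sigma$ is the defining embedding of the underlying $n$-manifold. Hence it suffices to show $\bar{\lambda}_{LY}^n[\Sigma]\geq\lambda_{CM}^n[\mathbb{S}^n]$. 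Since $\mathcal{S}\circ j$ is a conformal embedding of $M$ into $\mathbb{S}^N$, the quantity $V_c(N,\mathcal{S}\circ j)$ is at least the intrinsic $N$-conformal volume of $M$ --- the infimum of $V_c(N,\cdot)$ over conformal immersions of $M$ into $\mathbb{S}^N$ --- which is determined by the conformal type of $M$ alone, and for $M\cong\mathbb{RP}^2$ by its topology alone.

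This makes the case $n=2$, $M\cong\mathbb{RP}^2$ completely accessible: by Li--Yau \cite{liNewConformalInvariant1982a}, every conformal immersion of $\mathbb{RP}^2$ into a round sphere has $N$-conformal volume at least $6\pi$ (with equality realized by the Veronese surface in $\mathbb{S}^4$). Therefore, for any self-shrinker $\Sigma$ that is topologically $\mathbb{RP}^2$,
$$
\lambda_{CM}^2[\Sigma] \geq \bar{\lambda}_{LY}^2[\Sigma] \geq \frac{6\pi}{|\mathbb{S}^2|} = \frac{3}{2} > \frac{4}{e} = \lambda_{CM}^2[\mathbb{S}^2],
$$
which is the asserted inequality, in fact strict and with a fixed gap. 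The same argument verifies the conjecture for any topology $M$ whose intrinsic $n$-conformal volume strictly exceeds $|\mathbb{S}^n|\,\lambda_{CM}^n[\mathbb{S}^n]$; for surfaces of positive genus one would try to feed in Willmore-type conformal-volume bounds, but $\mathbb{RP}^2$ is the case where the required inequality is at once clean and unconditional, and it is the one I would write up.

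The main obstacle to the conjecture in full generality, and the reason the conformal-volume route does not close it, is the genus-zero case and, more broadly, the large-$n$ regime. The round self-shrinking sphere has $\bar{\lambda}_{LY}^n[\mathbb{S}^n]=1$, while $\lambda_{CM}^n[\mathbb{S}^n]=\left(\frac{2n}{4\pi e}\right)^{n/2}|\mathbb{S}^n|\to\sqrt{2}>1$; so for submanifolds that are topologically spheres the best conformal-geometric estimate available, namely $\bar{\lambda}_{LY}^n\geq 1$ from Theorem \ref{LYCMIneqThm}, is already weaker than the target, and the conjectured minimizer is itself a sphere. Proving the conjecture in this case therefore seems to require genuine use of the self-shrinker equation beyond its conformal geometry --- in codimension one the analogous statement is known and rests on the Colding--Minicozzi theory of entropy-stable shrinkers \cite{Coldinga} (cf.\ \cite{JZhu}). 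I would approach this by trying to strengthen the second inequality of Theorem \ref{LYCMIneqThm} using the stable conformal volume and virtual entropy functionals introduced here, which plausibly serve to interpolate between $\bar{\lambda}_{LY}^n$ and $\lambda_{CM}^n$ precisely in the regime where $\bar{\lambda}_{LY}^n=1$ is insufficient.
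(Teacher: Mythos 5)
Your argument matches the paper's exactly: since the statement is a conjecture that the paper only partially confirms, the paper (in the Corollary immediately following) combines Theorem \ref{LYCMIneqThm} with Li--Yau's $6\pi$ lower bound on the conformal volume of $\mathbb{RP}^2$ to obtain $\lambda_{CM}^2[\Sigma]\geq 3/2 > 4/e = \lambda_{CM}^2[\mathbb{S}^2]$, precisely as you do. Your honest assessment that the genus-zero and higher-dimensional cases lie beyond this conformal-volume route, because $\bar{\lambda}_{LY}^n[\mathbb{S}^n]=1<\lambda_{CM}^n[\mathbb{S}^n]$, is also consistent with the paper's framing of this as only a ``partial confirmation.''
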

When $n=N+1$, i.e., $\Sigma$ is a hypersurface, this was shown in \cite{CIMW} and when $n=1$ it follows from the classification of self-shrinking curves \cite[Lemma 6.16]{coldingComplexityParabolicSystems2020}. 
It was shown in \cite{BernsteinWang2}   that the only non-flat proper self-shrinking surfaces in $\Real^3$ with entropy below that of the shrinking cylinder, i.e., less than $\lambda_{CM}^2[\mathbb{S}^1\times \Real]=\lambda_{CM}^1[\mathbb{S}^1]\approx 1.52$, are the flat plane which has entropy 1, the shrinking sphere which has entropy $\lambda_{CM}^2[\mathbb{S}^2]\approx 1.47$ and the shrinking cylinder itself. A strengthening of Conjecture \eqref{CMConj} is that this phenomena continues to hold for surfaces of any codimension  \cite[Conjecture 8.1]{coldingComplexityParabolicSystems2020}.

In  \cite{liNewConformalInvariant1982a},  Li-Yau observed that two-dimensional real projective planes have conformal volume at least $6\pi$.  Thus, Theorem \ref{LYCMIneqThm} gives is a partial confirmation of Conjecture \ref{CMConj}:
\begin{cor}
	If $\Sigma\subset \Real^N$ is a self-shrinker that is topologically $\mathbb{RP}^2$, then
	$$
1.47\approx	\lambda_{CM}^2[\mathbb{S}^2]<\frac{3}{2} \leq \lambda_{CM}^2[\Sigma].
	$$
\end{cor}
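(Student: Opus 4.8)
The plan is to deduce the Corollary from Theorem \ref{LYCMIneqThm} by inserting the classical Li--Yau lower bound for the conformal volume of $\mathbb{RP}^2$, and then to compare with the (explicitly computable) entropy of the round two-sphere. None of the geometric work is new here: the content is all in Theorem \ref{LYCMIneqThm}.

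First I would set things up. Since $\Sigma$ is topologically $\mathbb{RP}^2$ it is a closed surface, hence automatically properly embedded, and by hypothesis it is a self-shrinker; so it satisfies the hypotheses of Theorem \ref{LYCMIneqThm} with $n=2$. Fix an embedding $j\colon M\to\Sigma\subset\Real^N$ with $M$ diffeomorphic to $\mathbb{RP}^2$, and equip $M$ with the conformal structure induced by $j$, i.e.\ by the pull-back $j^\ast$ of the Euclidean metric.

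Next I would invoke Li--Yau. Since $\mathcal{S}$ is conformal and any smooth immersion of a surface is conformal with respect to its own induced conformal structure, the composition $\mathcal{S}\circ j\colon M\to\mathcal{S}(\Sigma)\subset\mathbb{S}^N$ is a conformal embedding of the conformal surface $M\cong\mathbb{RP}^2$ into $\mathbb{S}^N$; in particular it is an admissible competitor in the definition of the $N$-conformal volume. The Li--Yau bound quoted above — that the conformal volume of $\mathbb{RP}^2$ is at least $6\pi$, equivalently that every conformal immersion of $\mathbb{RP}^2$ into a round sphere has conformal volume at least $6\pi$ — then yields
$$
\lambda_{LY}^2[\Sigma]=V_c(N,\mathcal{S}\circ j)\geq 6\pi ,
$$
so that, dividing by $|\mathbb{S}^2|=4\pi$, we get $\bar\lambda_{LY}^2[\Sigma]\geq\tfrac{3}{2}$. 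The second inequality of Theorem \ref{LYCMIneqThm} then gives $\tfrac{3}{2}\leq\bar\lambda_{LY}^2[\Sigma]\leq\lambda_{CM}^2[\Sigma]$. For the remaining strict inequality I would use the identity recalled in Conjecture \ref{CMConj}, which at $n=2$ reads $\lambda_{CM}^2[\mathbb{S}^2]=\tfrac{4}{4\pi e}\,|\mathbb{S}^2|=\tfrac{4}{e}\approx 1.47$, together with $\tfrac{4}{e}<\tfrac{3}{2}$ (equivalently $8<3e$). Concatenating the two chains proves $\lambda_{CM}^2[\mathbb{S}^2]<\tfrac{3}{2}\leq\lambda_{CM}^2[\Sigma]$.

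I do not expect a genuine obstacle: the geometric work is done by Theorem \ref{LYCMIneqThm}, and the last step is arithmetic. The only points that need a sentence of care are (i) checking that $\mathcal{S}\circ j$ really is admissible for the conformal-volume functional — immediate, since every surface immersion is conformal in its induced conformal structure — and (ii) that the $6\pi$ lower bound of \cite{liNewConformalInvariant1982a} applies to \emph{every} conformal structure carried by $\mathbb{RP}^2$, not merely the round one; this is exactly what Li--Yau prove, with $6\pi$ attained by the Veronese embedding into $\mathbb{S}^4$.
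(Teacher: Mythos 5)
Your proposal is correct and follows exactly the argument the paper sketches: a closed surface is automatically proper, Li--Yau's $6\pi$ lower bound for conformal embeddings of $\mathbb{RP}^2$ gives $\bar\lambda_{LY}^2[\Sigma]\geq 3/2$, Theorem \ref{LYCMIneqThm} supplies $\bar\lambda_{LY}^2[\Sigma]\leq\lambda_{CM}^2[\Sigma]$, and the strict inequality $\lambda_{CM}^2[\mathbb{S}^2]=4/e<3/2$ is arithmetic. (The remark about "every conformal structure carried by $\mathbb{RP}^2$" is harmless but vacuous, since $\mathbb{RP}^2$ carries a unique conformal structure by uniformization.)
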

There is a minimal $\mathbb{RP}^2$ embedded into $\mathbb{S}^4\subset \Real^5$, which we denote by $\Sigma_{\mathbb{RP}^2}$ and is sometimes called the Veronese surface, which has conformal volume $6\pi$.  One may check that $2 \Sigma_{\mathbb{RP}^2}$ is a self-shrinker and compute that
$$
\bar{\lambda}_{LY}^2[\Sigma_{\mathbb{RP}^2}]=\frac{3}{2}< 2.207\approx\frac{6}{e}=\frac{3}{2} \lambda_{CM}^2[\mathbb{S}^2]= \lambda^2_{CM}[\Sigma_{\mathbb{RP}^2}].
$$

In the case of tori, the conformal area has been estimated in terms of the complex structure by Montiel and Ros \cite{montielMinimalImmersionsSurfaces1986}.  Moreover, they gave exact values for a large class of tori; this was refined by Bryant in \cite{bryantConformalVolume2tori2015}. 
\begin{cor}\label{ToriEntropyCor}
	Let $\Sigma\subset \Real^N$ be a self-shrinking torus conformally parametrized by the lattice $\mathbb{C}/\Lambda_\omega$ where $\omega=x+i y$ and $0\leq x\leq \frac{1}{2}$, $\sqrt{1-x^2}\leq y$.  
	
	One has 
	$$
 \frac{\pi y}{x^2+y^2-x+1}\leq \bar{\lambda}_{LY}^2[\Sigma]\leq 	\lambda_{CM}^2[\Sigma].
	$$
\end{cor}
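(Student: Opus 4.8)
The plan is to split the statement into its two halves. The right-hand inequality $\bar{\lambda}_{LY}^2[\Sigma]\le\lambda_{CM}^2[\Sigma]$ is simply Theorem~\ref{LYCMIneqThm} in dimension $n=2$, applied to the self-shrinker $\Sigma$, so it needs no separate argument and is the only place the self-shrinker hypothesis is used. All the work is therefore in the left-hand inequality
$$
\frac{\pi y}{x^2+y^2-x+1}\le\bar{\lambda}_{LY}^2[\Sigma],
$$
which is a purely conformal statement: it uses only that $\Sigma$ is a properly embedded two-torus whose induced conformal structure is represented by $\omega=x+iy$ in the indicated fundamental domain.

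For that inequality I would first reduce to the conformal volume of the abstract torus. By definition $\bar{\lambda}_{LY}^2[\Sigma]=|\mathbb{S}^2|^{-1}V_c(N,\mathcal{S}\circ j)$, where $j\colon M\to\Sigma$ is a conformal parametrization of $\Sigma$ by the flat torus $M=\mathbb{C}/\Lambda_\omega$ and $\mathcal{S}$ is the conformal inverse stereographic projection \eqref{SteroEqn}. Since $\mathcal{S}$ is conformal and $j$ is a conformal embedding, $\mathcal{S}\circ j\colon\mathbb{C}/\Lambda_\omega\to\mathbb{S}^N$ is a conformal immersion of the abstract conformal torus $\mathbb{C}/\Lambda_\omega$. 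Hence, by the very definition of the Li-Yau conformal volume $V_c(\mathbb{C}/\Lambda_\omega)$ as the infimum of $V_c(k,\cdot)$ over all $k\ge2$ and all conformal immersions into $\mathbb{S}^k$ \cite{liNewConformalInvariant1982a}, the particular immersion $\mathcal{S}\circ j$ competes in that infimum and we obtain
$$
\bar{\lambda}_{LY}^2[\Sigma]=|\mathbb{S}^2|^{-1}V_c(N,\mathcal{S}\circ j)\ge(4\pi)^{-1}V_c(\mathbb{C}/\Lambda_\omega).
$$

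It then remains to quote the lower bound of Montiel and Ros \cite{montielMinimalImmersionsSurfaces1986}, with the equality cases refined by Bryant \cite{bryantConformalVolume2tori2015}, for the conformal volume of a conformal two-torus in terms of its modulus. In the normalization used here and on the fundamental domain $0\le x\le\frac12$, $x^2+y^2\ge1$, this bound reads
$$
V_c(\mathbb{C}/\Lambda_\omega)\ge\frac{4\pi^2 y}{x^2+y^2-x+1},
$$
the right-hand side being (up to normalization) the area of the homogeneous flat torus minimally immersed in $\mathbb{S}^5$ built from the three shortest lattice vectors $1,\omega,\omega-1$; here $y$ is the covolume of $\Lambda_\omega$ and one has the algebraic identity $x^2+y^2-x+1=\tfrac12\bigl(|1|^2+|\omega|^2+|\omega-1|^2\bigr)$. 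Combining the last two displays and dividing by $4\pi$ gives exactly $\frac{\pi y}{x^2+y^2-x+1}\le\bar{\lambda}_{LY}^2[\Sigma]$, which together with Theorem~\ref{LYCMIneqThm} completes the chain.

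The main obstacle is not conceptual but bookkeeping around the cited input: one must match the normalization of conformal area in \cite{montielMinimalImmersionsSurfaces1986,bryantConformalVolume2tori2015} to the $V_c$ used here and to the factor $|\mathbb{S}^2|^{-1}=(4\pi)^{-1}$, and confirm that the stated region $0\le x\le\frac12$, $\sqrt{1-x^2}\le y$ is precisely the standard $\mathrm{SL}(2,\mathbb{Z})$ fundamental domain on which their estimate is formulated, in particular that $1,\omega,\omega-1$ are there the three shortest vectors of $\Lambda_\omega$. The reduction $V_c(N,\mathcal{S}\circ j)\ge V_c(\mathbb{C}/\Lambda_\omega)$ is immediate from Li-Yau's definition but should be stated explicitly, as it is the one place where the hypothesis \emph{conformally parametrized by} $\mathbb{C}/\Lambda_\omega$ enters. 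As consistency checks: at the square torus $\omega=i$ the bound reads $\bar{\lambda}_{LY}^2[\Sigma]\ge\frac{\pi}{2}$, i.e.\ $V_c\ge2\pi^2$, the conformal volume of the Clifford torus; and at the hexagonal torus $\omega=e^{i\pi/3}$ it reads $\bar{\lambda}_{LY}^2[\Sigma]\ge\frac{\pi}{\sqrt3}$, both of which are the known sharp values.
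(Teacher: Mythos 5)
Your proof is correct and is essentially the argument the paper intends (the paper leaves the corollary unproved except for the remark citing Montiel--Ros). You combine the right half from Theorem~\ref{LYCMIneqThm} with the reduction $\lambda_{LY}^2[\Sigma]=V_c(N,\mathcal{S}\circ j)\ge V_c(\mathbb{C}/\Lambda_\omega)$ (immediate from Li--Yau's infimum definition of conformal volume) and then the Montiel--Ros lower bound, exactly as the paper's remark indicates; the normalization and consistency checks you give are accurate.
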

\begin{rem}
	This bound comes from the bound established in \cite{montielMinimalImmersionsSurfaces1986} on (unnormalized) conformal volume of the lattice
	$$
 \frac{4\pi^2 y}{x^2+y^2-x+1}\leq 	V_c(\mathbb{C}/\Lambda_\tau, [g_{\Real}]).
	$$
	This inequality becomes strict for large $y$.  However, as conjectured and partially shown in \cite{montielMinimalImmersionsSurfaces1986} and fully established in \cite{bryantConformalVolume2tori2015}, it is an equality in the restricted domain satisfying
	$$
	 (x-\frac{1}{2})^2+y^2\leq \frac{9}{4}.
	$$
In this region, the minimum is achieved by the rectangular torus with $\omega=i \sqrt{2} $ and the maximum is achieved by the hexagonal torus with $\omega=\frac{1}{2} +i \frac{\sqrt{3}}{2} $ and the values satisfy
	$$
\lambda_{CM}^2[\mathbb{S}^2]\approx 1.47<1.48\approx\frac{\sqrt{2}\pi}{3} \leq \frac{\pi y}{x^2+y^2-x+1}\leq \frac{\sqrt{3} \pi}{3} \approx 1.814.
$$
\end{rem}

\subsection{Virtual entropy}
In order to establish the link between the Li-Yau conformal volume and the Colding-Minicozzi entropy, it will be helpful to introduce an alternative notion of entropy.  This is inspired by work of Magni-Mantegazza \cite{magman08},  who, prior to Colding-Minicozzi's paper on the subject, introduce a quantity based off of Hamilton's matrix Harnack inequality \cite{hamiltonMatricHarnackEstimate1993} and its application to a generalized notion of Huisken monotonicity \cite{hamiltonMonotonicityFormulasParabolic1993}.  We revisit the approach of \cite{magman08}.  Using a simple observation about the matrix Harnack inequality in Euclidean space, we are led to a more flexible concept which should have other applications beyond the proof of Theorem \ref{LYCMIneqThm} .  

Let us first briefly recall some observations and ideas drawn from \cite{hamiltonMonotonicityFormulasParabolic1993,magman08}.  For $t>0$ and $u_0\in C^0\cap L^1(\Real^N)$, consider the convolution by the heat kernel,
\begin{equation}\label{HeatFlowEqn}
\begin{aligned}
(\mathcal{H}_t u_0)(\mathbf{x})=\int_{\Real^N} u_0(\mathbf{y})\mathcal{H}(t, \mathbf{x}, \mathbf{y})d\mathcal{L}^N(\mathbf{y});\\
\mathcal{H}(t, \mathbf{x}, \mathbf{y})=(4\pi t)^{-\frac{N}{2}} e^{-\frac{|\mathbf{x}-\mathbf{y}|^2}{4t}}.
\end{aligned}
\end{equation}
 For such $u_0$, $U(t, \cdot)=\mathcal{H}_t u_0$ is a solution of the heat equation continuous on $[0, \infty)\times \Real^N$.   For $t>0$, consider the collection time $t$ evolutions of positive mass one initial data, 
$$
\mathcal{H}^+_1(t)=\set{\mathcal{H}_t u_0, u_0\in C^0(\Real^N),  u_0>0, \int_{\Real^N} u_0 d\mathcal{L}^N=1}\subset  C^\infty\cap L^1(\Real^N),
$$
For any $\delta>0$, the Gaussian profiles satisfy
$$
\mathcal{G}_1(t)= \set{ \mathcal{H}(t, \mathbf{x}, \mathbf{x}_0)=(4\pi t)^{-\frac{N}{2}} e^{-\frac{|\mathbf{x}-\mathbf{x}_0|^2}{4t}}:\mathbf{x}_0\in \Real^N}\subset \mathcal{H}^+_1(t+\delta).
$$
For $t>0$, any $U\in \mathcal{H}_1^+(t)\cup \mathcal{G}_1(t)$, satisfies Hamilton's matrix Harnack inequality:
\begin{equation}\label{LogConvexEqn}
	\nabla^2_{\Real} \log U +\frac{1}{2t} g_{\Real}\geq 0.
\end{equation}
When traced this is the Li-Yau Harnack inequality.  Both inequalities hold  in more general settings -- see \cite{hamiltonMatricHarnackEstimate1993, liParabolicKernelSchr6dinger1986}.  
For the convenience of the reader, an elementary proof of this fact in Euclidean space and for positive solutions given by convolving with the heat kernel as in \eqref{HeatFlowEqn} is provided in Lemma \ref{MHInequProp}.

 Hamilton observed in \cite{hamiltonMonotonicityFormulasParabolic1993}, that \eqref{LogConvexEqn} could allowed one to generalize the Huisken monotonicity formula \cite{HuiskenMon}  -- see also \cite{Ecker2001}.  It was this observation that was used in \cite{magman08} and which we will also exploit.  Following Magni-Mantegazza, for $t>0$ and $\Sigma\subset \Real^N$ an $n$-dimensional submanifold set
$$
\sigma_{MM}[\Sigma, t]= \sup_{U\in \mathcal{H}^+_1(t)} (4\pi t)^{\frac{N-n}{2}} \int_{\Sigma} U d\mathcal{H}^n.
$$
It is shown in \cite{magman08} that it suffices to take the sup over $U\in \mathcal{G}_1(t)$ and so
$$
\sigma_{MM}[\Sigma, t]=\sup_{U\in \mathcal{G}_1(t)}(4\pi t)^{\frac{N-n}{2}} \int_{\Sigma} U d\mathcal{H}^n\leq\lambda_{CM}^n[\Sigma]=\sup_{t>0} \sigma_{MM}[\Sigma, t].
$$
That is,  the Magni-Mantegezza functionals lead to the Colding-Minicozzi entropy.

We modify the approach of Magni-Mantegazza by focusing on the condition \eqref{LogConvexEqn}.  To that end, for a positive function $U\in C^2(\Real^{N})$, set
$$
\tau(U)= \sup\set{\tau\geq 0:  2\tau \nabla^2_\Real \log U+  g_\Real \geq 0}
$$
which we call the \emph{virtual time} of $U$. Note that $\tau(U)\geq 0$, but can be equal to $0$.  When $U\in \mathcal{H}^+_1(t)\cup \mathcal{G}_1(t)$, $t>0$, \eqref{LogConvexEqn} holds and so
$$
\tau(U) \geq t>0
$$
and one has equality for $U\in \mathcal{G}_1(t)$.  That is, for positive solutions of the heat equation the virtual time estimates the real time from above.  When $u_0\in C^2\cap L^1(\Real^N)$ has $\tau(u_0)=\tau_0>0$ and $U(t,\cdot)=\mathcal{H}_t u_0$ is heat flow with initial data $u_0$ the proof of matrix Harnack inequality from \cite{hamiltonMatricHarnackEstimate1993} suggests that
\begin{equation}\label{LinGrowVTEqn}
\tau(U(t,\cdot))\geq t+\tau_0.
\end{equation}
That is, the virtual time of a positive solution of the heat equation grows at least linearly in the evolution time.
This is slightly subtle due to the non-compactness of $\Real^N$ and is proved in Proposition \ref{MHInequProp} using ideas from \cite{caoMatrixLiYauHamiltonEstimates2005}.

For $T>0$, set
$$
\mathcal{VT}_1^+(T) =\set{u \in  C^2\cap L^1(\Real^{N}): u>0,  \tau (u)\geq T, \int_{\Real^N} u d\mathcal{L}^N=1}.
$$
Clearly, for $0<T_1<T_2$, $\mathcal{VT}_1^+(T_2)\subset \mathcal{VT}_1^+(T_1)$. As observed above
\begin{equation}\label{EinHinVEqn}
\mathcal{H}_1^+(T)\cup \mathcal{G}_1(T)\subset \mathcal{VT}_1^+(T).
\end{equation}\
The inclusion is strict as the elements of $\mathcal{H}_1^+(T)$ must also have strong regularity properties not required of elements in $\mathcal{VT}_1^+(t)$.   Moreover, $\mathcal{G}_{1}(t)\subset \mathcal{VT}_1^+(T)$ for $t\geq T$ and $\mathcal{G}_1(t)\cap \mathcal{VT}_1^+(T)=\emptyset $ for $0<t<T$
 Standard facts about heat flow and \eqref{LinGrowVTEqn} also imply,
$$
\mathcal{H}_t(\mathcal{VT}_1^+(T))\subset \mathcal{VT}_1^+(T+t).
$$
Given an $n$-dimensional submanifold $\Sigma \subset \Real^N$ we define its \emph{\vte} to be
$$
\lambda_{V}^n[\Sigma]=\sup_{T>0}\sup_{U\in\mathcal{VT}_1^+(T)} \left( (4\pi T )^{\frac{N-n}{2}} \int_\Sigma U d\mathcal{H}^n\right).
$$
As in \cite{hamiltonMonotonicityFormulasParabolic1993, magman08}, this quantity is monotone along reasonable mean curvature flows.
\begin{thm}\label{LCMonThm}
	If $t\in [0, T)\mapsto \Sigma_t\subset \Real^N$ is a mean curvature flow of $n$-dimensional proper submanifolds with $\lambda_{V}^n[\Sigma_0]<\infty$, then 
	$$
	t\mapsto \lambda_{V}^n[\Sigma_t]
	$$
	is monotone non-increasing in $t$.
\end{thm}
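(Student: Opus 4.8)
The plan is to run the Hamilton--Magni-Mantegazza proof of generalized Huisken monotonicity, but applied to the heat-flow evolution of an arbitrary competitor rather than a fixed positive heat flow; the linear growth \eqref{LinGrowVTEqn} of virtual time is exactly what keeps the parabolic normalization from degenerating at the terminal time. It suffices to show: for $0\le t_1<t_2<T$, every $T_0>0$, and every $U\in\mathcal{VT}_1^+(T_0)$,
$$
(4\pi T_0)^{\frac{N-n}{2}}\int_{\Sigma_{t_2}}U\,d\mathcal{H}^n\le \lambda_V^n[\Sigma_{t_1}],
$$
because taking the supremum over $U$ and $T_0$ then gives $\lambda_V^n[\Sigma_{t_2}]\le\lambda_V^n[\Sigma_{t_1}]$. (For $t_1>0$, finiteness of $\lambda_V^n[\Sigma_{t_1}]$ is not needed a priori; it follows from the case $t_1=0$, which uses the hypothesis $\lambda_V^n[\Sigma_0]<\infty$, and is in any case irrelevant when the right-hand side is infinite.)

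To prove the displayed inequality, set $s=t_2-t_1$, $t_\ast=t_2+T_0$, and on $\Real^N\times[t_1,t_2]$ define $W(\cdot,t)=\mathcal{H}_{t_2-t}U$. For $t\in[t_1,t_2)$ this is smooth and positive and solves the backward heat equation $\partial_tW=-\Delta_\Real W$, while $W(\cdot,t_2)=U$ by continuity of heat flow on $C^0\cap L^1$. By \eqref{LinGrowVTEqn}, $\tau(W(\cdot,t))\ge(t_2-t)+T_0=t_\ast-t$, and since $2\tau'\nabla^2_\Real\log W+g_\Real\ge0$ for every $\tau'\in[0,\tau(W(\cdot,t))]$ (write it as a nonnegative combination of $2\tau(W(\cdot,t))\nabla^2_\Real\log W+g_\Real$ and $g_\Real$), we get the matrix Harnack inequality $\nabla^2_\Real\log W(\cdot,t)+\tfrac{1}{2(t_\ast-t)}g_\Real\ge0$ on $[t_1,t_2)$. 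Now put $\phi(t)=(4\pi(t_\ast-t))^{\frac{N-n}{2}}\int_{\Sigma_t}W(\cdot,t)\,d\mathcal{H}^n$ and $u=(4\pi(t_\ast-t))^{\frac{N-n}{2}}W$, so that $\partial_tu=-\Delta_\Real u-\tfrac{N-n}{2(t_\ast-t)}u$ and $\nabla^2_\Real\log u=\nabla^2_\Real\log W$. Combining the first variation of area under the flow (whose normal velocity is $\hH_{\Sigma_t}$), the splitting $\Delta_\Real u=\Delta_{\Sigma_t}u-\langle\nabla_\Real u,\hH_{\Sigma_t}\rangle+\operatorname{tr}_{(T\Sigma_t)^\perp}\nabla^2_\Real u$, the vanishing of $\int_{\Sigma_t}\Delta_{\Sigma_t}u\,d\mathcal{H}^n$, completion of the square in $\hH_{\Sigma_t}$, and the identity $\nabla^2_\Real u=u\,\nabla^2_\Real\log u+u^{-1}\nabla_\Real u\otimes\nabla_\Real u$, one obtains (as in \cite{hamiltonMonotonicityFormulasParabolic1993,magman08})
$$
\frac{d}{dt}\phi(t)=-\int_{\Sigma_t}\left(u\Bigl|\hH_{\Sigma_t}-\tfrac{1}{u}(\nabla_\Real u)^\perp\Bigr|^2+u\operatorname{tr}_{(T\Sigma_t)^\perp}\!\Bigl(\nabla^2_\Real\log W+\tfrac{1}{2(t_\ast-t)}g_\Real\Bigr)\right)d\mathcal{H}^n.
$$
Because the normal space has dimension $N-n$ and $\nabla^2_\Real\log W+\tfrac{1}{2(t_\ast-t)}g_\Real\ge0$ for $t<t_2$, both integrands are nonnegative, so $\phi$ is non-increasing on $[t_1,t_2)$. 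Hence, using lower semicontinuity at $t=t_2$ (Fatou, after pulling back to a fixed parametrization, noting $W(\cdot,t)\to U$ locally uniformly and $t_\ast-t_2=T_0$), $\phi(t_2)\le\phi(t_1)$. Finally $\phi(t_2)=(4\pi T_0)^{\frac{N-n}{2}}\int_{\Sigma_{t_2}}U\,d\mathcal{H}^n$ while $\phi(t_1)=(4\pi(T_0+s))^{\frac{N-n}{2}}\int_{\Sigma_{t_1}}\mathcal{H}_sU\,d\mathcal{H}^n\le\lambda_V^n[\Sigma_{t_1}]$, since $\mathcal{H}_sU\in\mathcal{VT}_1^+(T_0+s)$ by the inclusion $\mathcal{H}_s(\mathcal{VT}_1^+(T_0))\subset\mathcal{VT}_1^+(T_0+s)$; this is the claimed inequality.

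The conceptual heart is the coupling of the generalized Huisken computation with \eqref{LinGrowVTEqn}: without the linear growth of virtual time one would only know $\tau(W(\cdot,t))\ge t_2-t$, hence could only run the monotone quantity with normalization $(4\pi(t_2-t))^{\frac{N-n}{2}}$, which collapses to $0$ as $t\to t_2$ and gives no information about $\Sigma_{t_2}$; the surplus $T_0$ is precisely what remains. I expect the main technical obstacle to be the justification of the formal steps on non-compact $\Sigma_t$: differentiation under the integral sign, the first variation identity, the integration by parts $\int_{\Sigma_t}\Delta_{\Sigma_t}u\,d\mathcal{H}^n=0$, and the semicontinuity at $t=t_2$. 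This is dealt with by the standard cutoff argument, using that $W(\cdot,t)=\mathcal{H}_{t_2-t}U$ and its derivatives have Gaussian decay for $t<t_2$ while the $\Sigma_t$ have locally uniform Euclidean area growth on $[t_1,t_2]$ (a consequence of properness, smoothness of the flow, and the finiteness hypothesis propagated forward from $t=0$), exactly as for Huisken-type monotonicity on non-compact flows in \cite{Ecker2001,magman08}.
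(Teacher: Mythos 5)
Your proof is essentially the paper's, merely reorganized: you reduce to showing $(4\pi T_0)^{\frac{N-n}{2}}\int_{\Sigma_{t_2}}U\,d\mathcal{H}^n\le \lambda_V^n[\Sigma_{t_1}]$ for every competitor $U\in\mathcal{VT}_1^+(T_0)$, run the Ecker--Hamilton--Magni-Mantegazza monotonicity on $W(\cdot,t)=\mathcal{H}_{t_2-t}U$, use $\tau(W(\cdot,t))\ge (t_2-t)+T_0$ (Proposition~\ref{MHInequProp}) so the weight stays a $\mathcal{VT}_1^+$ element at a time that does not degenerate at $t=t_2$, and close with $\mathcal{H}_s(\mathcal{VT}_1^+(T_0))\subset\mathcal{VT}_1^+(T_0+s)$; the paper does exactly this, but factors the monotonicity computation into Proposition~\ref{MonProp} and phrases the reduction via a near-maximizer $u_{T_2,\epsilon}$. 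The one point I would flag is your justification of the cutoff/localization step: you assert that $W(\cdot,t)$ and its derivatives have Gaussian decay for $t<t_2$, but this is not available for a general $U\in\mathcal{VT}_1^+(T_0)$ -- such $U$ need only be $C^2\cap L^1$, positive, and log-concave at scale $T_0$, and Lemma~\ref{BoundLem}(1) gives a Gaussian \emph{lower} bound, not an upper one, so $U$ (and hence $\mathcal{H}_s U$) can decay much slower than Gaussianly. What actually makes the $R\to\infty$ limit work is the uniform integral bound of Proposition~\ref{VECMBound}, $\int_{\Sigma_t}W(\cdot,t)\,d\mathcal{H}^n\le C_3(4\pi(t_*-t))^{\frac{n-N}{2}}\lambda_{CM}^n[\Sigma_t]$, combined with a cutoff with $O(R^{-1})$ error terms; replacing your decay claim by this proposition (and noting $\lambda_{CM}^n[\Sigma_t]\le\lambda_{CM}^n[\Sigma_0]<\infty$ by Huisken monotonicity) closes the gap cleanly.
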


It follows immediately from \eqref{EinHinVEqn} that
$$
\lambda_{CM}^n[\Sigma]\leq  \lambda_{V}^n[\Sigma].
$$
The long-time behavior of suitable solutions of the heat equation is towards the Gaussian -- see,  e.g., \cite{vazquezAsymptoticBehaviourMethods2018}.  Using Theorem \ref{LCMonThm}, this suggests that when $\Sigma$ is a self-shrinker,
\begin{equation}
\label{SelfShrinkEntEqualEqn}
\lambda_{CM}^n[\Sigma]=  \lambda_{V}^n[\Sigma].
\end{equation}
This is proved in Theorem \ref{SSVTCMThm}.  It is unclear to the author how general this equality is.

\subsection{Stable conformal volume}
Finally, we introduce a functional obtained by ``stabilizing" the (normalized) Li-Yau conformal volume in a manner reminiscent of the way stable homotopy groups are introduced.  This is distinct from the stabilization procedure of \cite{liNewConformalInvariant1982a} which involved conformally embedding in higher and higher dimensional spheres. This quantity is related to both the Colding-Minicozzi and virtual entropies.

Given a $n$-dimensional submanifold $\Sigma\subset \Real^N$, its \emph{stable conformal volume} is defined as
$$
\bar{\lambda}^\infty_{LY} [\Sigma]=\liminf_{m\to \infty} \bar{\lambda}_{LY}^{n+2m}[\Sigma\times \Real^{2m}].
$$
Crucially, this sequence is monotonically increasing in the stabilization parameter:
\begin{prop}\label{StabConfVolMonDimProp}
	If $\Sigma \subset \Real^N$ is a $n$-dimensional proper submanifold, then
	$$
	\bar{\lambda}_{LY}^n[\Sigma]\leq \bar{\lambda}_{LY}^{n+2m}[\Sigma \times \Real^{2m}]\leq \bar{\lambda}^\infty_{LY} [\Sigma].
	$$
	In particular, $m\mapsto \bar{\lambda}_{LY}^{n+2m}[\Sigma \times \Real^{2m}]$ is increasing in $m$ and so
	$$
	\bar{\lambda}^\infty_{LY}[\Sigma]= \lim_{m \to \infty} \bar{\lambda}_{LY}^{n+2m}[\Sigma\times \Real^{2m}].
	$$
\end{prop}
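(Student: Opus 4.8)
The plan is to reduce the statement to a monotonicity property of a one-parameter family of auxiliary functionals. For a real $p\geq n$ and an $n$-dimensional proper submanifold $\Sigma\subset\Real^N$, set
$$
\Lambda_p[\Sigma]=\sup_{\rho>0,\ \mathbf{x}_0\in\Real^N}\int_\Sigma \rho^n\bigl(1+\tfrac14\rho^2|\mathbf{x}-\mathbf{x}_0|^2\bigr)^{-p}\,d\mathcal{H}^n(\mathbf{x})\in[0,+\infty],
$$
so that $\lambda_{LY}^n[\Sigma]=\Lambda_n[\Sigma]$ by the formula recalled in the introduction; moreover $\Lambda_p[\Real^n]=2^n\int_{\Real^n}(1+|\mathbf{u}|^2)^{-p}\,d\mathcal{L}^n$ is a finite positive constant (a Beta integral, finite since $p>\tfrac n2$). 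The first step is to establish, for every $m\geq 0$, the identity
$$
\bar{\lambda}_{LY}^{n+2m}[\Sigma\times\Real^{2m}]=\frac{\Lambda_{n+m}[\Sigma]}{\Lambda_{n+m}[\Real^n]},
$$
which for $m=0$ is the definition $\bar{\lambda}_{LY}^n[\Sigma]=\Lambda_n[\Sigma]/\Lambda_n[\Real^n]$. To prove it, apply the formula for $\lambda_{LY}^{n+2m}$ to $\Sigma\times\Real^{2m}\subset\Real^{N+2m}$, use translation invariance of Lebesgue measure to drop the translation in the $\Real^{2m}$-factor, and carry out the (radial, elementary) integration over the Euclidean fiber; this collapses the integral of $\WW{n+2m}{\rho}$ over the fiber to $c_{n,m}\,\rho^n\bigl(1+\tfrac14\rho^2|\mathbf{x}-\mathbf{x}_0|^2\bigr)^{-(n+m)}$ for an explicit constant $c_{n,m}$, whence $\lambda_{LY}^{n+2m}[\Sigma\times\Real^{2m}]=c_{n,m}\,\Lambda_{n+m}[\Sigma]$. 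Dividing by $|\mathbb{S}^{n+2m}|$ and evaluating on $\Sigma=\Real^n$ (where the left side equals $1$, as $\Real^n\times\Real^{2m}$ is an affine plane) identifies $c_{n,m}/|\mathbb{S}^{n+2m}|=1/\Lambda_{n+m}[\Real^n]$, giving the identity for general $\Sigma$.

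Granting this, the proposition reduces to the exponent-monotonicity
$$
\frac{\Lambda_p[\Sigma]}{\Lambda_p[\Real^n]}\leq\frac{\Lambda_{p+1}[\Sigma]}{\Lambda_{p+1}[\Real^n]},\qquad p\geq n,
$$
applied along $p=n,n+1,\dots,n+m$: this simultaneously yields $\bar{\lambda}_{LY}^n[\Sigma]\leq\bar{\lambda}_{LY}^{n+2m}[\Sigma\times\Real^{2m}]$ and that $m\mapsto\bar{\lambda}_{LY}^{n+2m}[\Sigma\times\Real^{2m}]$ is non-decreasing; being monotone, this sequence has $\liminf$ equal to its limit and supremum, which is then $\bar{\lambda}_{LY}^\infty[\Sigma]$, and every term lies below it.

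For the exponent-monotonicity I would exhibit an explicit superposition over scales. From $\tfrac{d}{du}\bigl[u^p(1+Vu)^{-p}\bigr]=p\,u^{p-1}(1+Vu)^{-(p+1)}$ one gets $\int_0^1 u^{p-1}(1+Vu)^{-(p+1)}\,du=\tfrac1p(1+V)^{-p}$, and substituting $u=(\rho'/\rho)^2$ turns this into the pointwise identity
$$
\frac{\rho^n\bigl(1+\tfrac14\rho^2|\mathbf{x}-\mathbf{x}_0|^2\bigr)^{-p}}{\Lambda_p[\Real^n]}=\int_0^\rho \frac{(\rho')^n\bigl(1+\tfrac14(\rho')^2|\mathbf{x}-\mathbf{x}_0|^2\bigr)^{-(p+1)}}{\Lambda_{p+1}[\Real^n]}\,d\nu_{p,\rho}(\rho'),
$$
valid for every $\mathbf{x}$, where $d\nu_{p,\rho}=(2p-n)\,\rho^{n-2p}(\rho')^{2p-n-1}\,\mathbf{1}_{(0,\rho)}(\rho')\,d\rho'$ is a probability measure on $(0,\rho)$; the normalizing constant $(2p-n)$ and the ratio $\Lambda_{p+1}[\Real^n]/\Lambda_p[\Real^n]=(2p-n)/(2p)$ (computed from the Beta integral) are exactly what make it hold, as one checks at $\mathbf{x}=\mathbf{x}_0$. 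Integrating this identity against $d\mathcal{H}^n$ over $\Sigma$, exchanging the two integrations (Tonelli, all integrands nonnegative), bounding the inner $\Sigma$-integral by $\Lambda_{p+1}[\Sigma]/\Lambda_{p+1}[\Real^n]$, and using that $\nu_{p,\rho}$ has mass $1$, one obtains $\Lambda_p[\Real^n]^{-1}\int_\Sigma\rho^n(1+\tfrac14\rho^2|\mathbf{x}-\mathbf{x}_0|^2)^{-p}\,d\mathcal{H}^n\leq\Lambda_{p+1}[\Sigma]/\Lambda_{p+1}[\Real^n]$; taking the supremum over $\rho>0$ and $\mathbf{x}_0$ completes the step.

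The calculations in the first and third steps are routine; the one substantive point — and the only place where a genuine idea is needed — is to recognize the superposition, i.e. that the normalized weight at exponent $p$ is a probabilistic mixture of the normalized weights at exponent $p+1$ having \emph{smaller} scale parameter and the \emph{same} center. The remaining arguments are bookkeeping, together with the harmless convention that all quantities are read in $[0,+\infty]$, since $\Lambda_p[\Sigma]$ may a priori be infinite, in which case the relevant inequalities hold trivially.
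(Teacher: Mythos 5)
Your proof is correct, and it takes a genuinely different route from the paper's. The paper first establishes (Proposition~\ref{StableLYIterateProp}) that when the conformal volume is finite \emph{and realized}, the first-order condition at the maximizing scale $\rho_0$ — differentiating $\int_\Sigma W^n_\rho$ in $\rho$ and setting the derivative to zero — gives $\int_\Sigma W^n_{\rho_0}=\tfrac2{\rho_0}\int_\Sigma W^{n+1}_{\rho_0}$, which is then matched, via the fiber-integration Lemma~\ref{IterateLem}, to the conformal volume of $\Sigma\times\Real^2$. Because the supremum must actually be attained, the paper has to reduce to compact pieces: it proves the proposition by exhausting $\Sigma$ by $\Sigma\cap\bar B_{R_\epsilon}$, applying the optimality argument to each compact truncation, and passing to the limit. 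Your argument replaces the first-order optimality step by the pointwise superposition identity
$$
\frac{\rho^n\bigl(1+\tfrac14\rho^2|\mathbf{x}-\mathbf{x}_0|^2\bigr)^{-p}}{\Lambda_p[\Real^n]}
=\int_0^\rho \frac{(\rho')^n\bigl(1+\tfrac14(\rho')^2|\mathbf{x}-\mathbf{x}_0|^2\bigr)^{-(p+1)}}{\Lambda_{p+1}[\Real^n]}\,d\nu_{p,\rho}(\rho'),
$$
which is valid pointwise for every $\mathbf{x}$ and makes the comparison of \emph{suprema} immediate by Tonelli, with no attainment, no compactness, and no truncation argument needed. (I verified the constants: $\tfrac{d}{du}[u^p(1+Vu)^{-p}]=p\,u^{p-1}(1+Vu)^{-(p+1)}$ gives the integral identity, the substitution $u=(\rho'/\rho)^2$ converts it correctly, $\nu_{p,\rho}$ has mass one, and the Beta-function ratio $\Lambda_{p+1}[\Real^n]/\Lambda_p[\Real^n]=(2p-n)/(2p)$ balances it.) What the paper's route buys that yours does not is the complementary upper bound $\bar\lambda_{LY}^{n+2}[\Sigma\times\Real^2]\le 2\,\bar\lambda_{LY}^n[\Sigma]$ in Proposition~\ref{StableLYIterateProp}, obtained from the pointwise inequality $W^{M+1}_\rho\le\rho W^M_\rho$ — but that bound is not part of the present proposition, so for the statement at hand your argument is both shorter and structurally cleaner, avoiding the attainment and exhaustion steps entirely.
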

In contrast with process in \cite{liNewConformalInvariant1982a}, this method of stabilization breaks the full conformal invariance -- though the functionals remain invariant under Euclidean translations and dilations. For instance, it follows from \cite{bryantSurfacesConformalGeometry1988} that
$$
\bar{\lambda}_{LY}^{n+2}[\Sigma\times \Real^2] =1
$$
if and only if $\Sigma$ is an affine $n$-plane.  In particular, even though $\bar{\lambda}_{LY}^n[\Real^n]=\bar{\lambda}_{LY}^n[\mathbb{S}^n]=1$, 
$$
1=\bar{\lambda}_{LY}^{n}[\mathbb{S}^n]<\bar{\lambda}_{LY}^{n+2}[\mathbb{S}^n\times \Real^2].
$$

Rather remarkably, the stable conformal volume lies between the Colding-Minicozzi entropy and the \vte.
\begin{thm}\label{CMLYVTIneqThm}
	For any $n$-dimensional proper submanifold, $\Sigma \subset \Real^N$, 
	$$
	\lambda_{CM}^n[\Sigma]\leq \bar{\lambda}_{LY}^\infty[\Sigma]\leq \lambda_{V}^n[\Sigma].
	$$
	That is, the Colding-Minicozzi entropy of $\Sigma$ provides a lower bound for its stable conformal volume while the virtual entropy  provides an upper bound.
\end{thm}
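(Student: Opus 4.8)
The plan is to reduce everything to one explicit identity obtained by integrating the conformal weight over the Euclidean factor $\Real^{2m}$, and then to use that both entropies need only dominate the \emph{limit} $m\to\infty$, not each stabilized term. Write a point of $\Sigma\times\Real^{2m}\subset\Real^{N+2m}$ as $(\mathbf{x},\mathbf{y})$. Since $\Sigma\times\Real^{2m}$ is invariant under translations of the $\Real^{2m}$ factor and the weights $\WW{n+2m}{\rho}$ depend only on $|\cdot|$, in the definition of $\lambda_{LY}^{n+2m}$ the center may be taken to be $(\mathbf{x}_0,\mathbf{0})$. Applying Fubini, the substitution $\mathbf{y}=2\rho^{-1}(1+\tfrac14\rho^2|\mathbf{x}-\mathbf{x}_0|^2)^{1/2}\mathbf{w}$, the identity $\int_{\Real^{2m}}(1+|\mathbf{w}|^2)^{-(n+2m)}\,d\mathbf{w}=\pi^m\Gamma(n+m)/\Gamma(n+2m)$, and the Legendre duplication formula for $\Gamma(n+2m)$, one computes
\[
\bar\lambda_{LY}^{n+2m}[\Sigma\times\Real^{2m}]=\mu_{n+m}[\Sigma],\qquad
\mu_p[\Sigma]:=\sup_{\rho>0,\ \mathbf{x}_0\in\Real^N}\frac{\rho^n\,\Gamma(p)}{(4\pi)^{n/2}\,\Gamma(p-\tfrac n2)}\int_\Sigma\big(1+\tfrac14\rho^2|\mathbf{x}-\mathbf{x}_0|^2\big)^{-p}\,d\mathcal{H}^n .
\]
By Proposition~\ref{StabConfVolMonDimProp} the sequence $m\mapsto\mu_{n+m}[\Sigma]$ is nondecreasing, so $\bar\lambda_{LY}^\infty[\Sigma]=\lim_{p\to\infty}\mu_p[\Sigma]$ (the limit over $p=n+1,n+2,\dots$, allowed to be $+\infty$).

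\emph{The test functions.} Fix $\rho>0$, $\mathbf{x}_0\in\Real^N$, and $p>N/2$, and set $v_p(\mathbf{x})=C_p\big(1+\tfrac14\rho^2|\mathbf{x}-\mathbf{x}_0|^2\big)^{-p}$ with $C_p=\rho^N\Gamma(p)\big/\big((4\pi)^{N/2}\Gamma(p-\tfrac N2)\big)$, so that $v_p>0$, $v_p\in C^\infty\cap L^1(\Real^N)$ and $\int_{\Real^N}v_p\,d\mathcal{L}^N=1$. A direct computation of $\nabla^2_\Real\log(1+\tfrac14\rho^2|\mathbf{x}-\mathbf{x}_0|^2)$ shows that its largest eigenvalue is everywhere $\le\tfrac12\rho^2$, with equality at $\mathbf{x}_0$; hence $\nabla^2_\Real\log v_p\ge-\tfrac{p\rho^2}{2}g_\Real$, i.e. $v_p\in\mathcal{VT}_1^+(T_p)$ with $T_p=1/(p\rho^2)$. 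Expressing the integrand of $\mu_p$ through $C_p$ and $T_p$ gives, for every $\rho,\mathbf{x}_0$,
\[
\frac{\rho^n\,\Gamma(p)}{(4\pi)^{n/2}\,\Gamma(p-\tfrac n2)}\int_\Sigma\big(1+\tfrac14\rho^2|\mathbf{x}-\mathbf{x}_0|^2\big)^{-p}\,d\mathcal{H}^n=r_p\,(4\pi T_p)^{\frac{N-n}{2}}\int_\Sigma v_p\,d\mathcal{H}^n,\qquad r_p:=\frac{p^{(N-n)/2}\,\Gamma(p-\tfrac N2)}{\Gamma(p-\tfrac n2)},
\]
and by the standard asymptotics $\Gamma(p+a)/\Gamma(p+b)\sim p^{a-b}$ one has $r_p\to1$ as $p\to\infty$.

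\emph{The two bounds.} For the upper bound, since $v_p\in\mathcal{VT}_1^+(T_p)$ the right-hand side above is $\le r_p\,\lambda_V^n[\Sigma]$; as $r_p$ does not depend on $\rho$ or $\mathbf{x}_0$, taking the supremum gives $\mu_p[\Sigma]\le r_p\,\lambda_V^n[\Sigma]$, and $p\to\infty$ yields $\bar\lambda_{LY}^\infty[\Sigma]\le\lambda_V^n[\Sigma]$. For the lower bound, fix $\sigma>0$ and $\mathbf{x}_0$ and choose $\rho=\rho_p$ with $\rho_p^2=1/(p\sigma)$, so that $T_p=\sigma$; then $v_p\to(4\pi\sigma)^{-N/2}e^{-|\mathbf{x}-\mathbf{x}_0|^2/(4\sigma)}$ pointwise, so Fatou's lemma together with $r_p\to1$ gives
\[
\bar\lambda_{LY}^\infty[\Sigma]=\lim_{p\to\infty}\mu_p[\Sigma]\ \ge\ \liminf_{p\to\infty}\ r_p\,(4\pi\sigma)^{\frac{N-n}{2}}\int_\Sigma v_p\,d\mathcal{H}^n\ \ge\ (4\pi\sigma)^{-n/2}\int_\Sigma e^{-\frac{|\mathbf{x}-\mathbf{x}_0|^2}{4\sigma}}\,d\mathcal{H}^n,
\]
and taking the supremum over $\sigma>0$ and $\mathbf{x}_0$ gives $\lambda_{CM}^n[\Sigma]\le\bar\lambda_{LY}^\infty[\Sigma]$.

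\emph{Main obstacle.} The delicate point is the upper bound: the naive inequality $\bar\lambda_{LY}^{n+2m}[\Sigma\times\Real^{2m}]\le\lambda_V^n[\Sigma]$ is \textbf{false} for fixed $m$, since the comparison with $v_p$ loses exactly the factor $r_p>1$ (equivalently, one would have to reverse a monotone but not sharp quotient of Gamma functions). The argument only works in the limit $p\to\infty$, where $r_p\to1$ — so it is essential that $\bar\lambda_{LY}^\infty$ is defined as a limit rather than as any single stabilized term, and the whole proof hinges on the cancellation of the $\rho$-dependence that makes $r_p$ depend on $p$, $n$, $N$ alone. Finally, if some $\mu_p[\Sigma]$ is infinite then the same identities force $\lambda_V^n[\Sigma]=+\infty$ (and possibly $\lambda_{CM}^n[\Sigma]$ as well), so both inequalities hold trivially; otherwise all integrals above are finite by properness of $\Sigma$, and Fubini/Fatou apply as used.
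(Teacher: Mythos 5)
Your proof is correct and follows essentially the same route as the paper: the paper derives Theorem~\ref{CMLYVTIneqThm} from Propositions~\ref{ConfVolCMentProp} and~\ref{WeightInVTProp}, whose content you have reconstructed — integrate out the $\Real^{2m}$ factor via Fubini (= Lemma~\ref{IterateLem}), normalize the resulting weight to get a test function in $\mathcal{VT}_1^+(T_p)$ with $T_p=1/(p\rho^2)$ (= Lemma~\ref{LogConvexComputeLem} and Proposition~\ref{WeightInVTProp}), and take $p\to\infty$ so that the constant $r_p\to 1$. Your $r_p$ equals the paper's ratio $\hat C_{n,m}/\hat C_{N,n-N+m}$; the only cosmetic differences are that you carry out the bookkeeping via Gamma functions and Legendre duplication rather than the recursions for $|\mathbb{S}^k|$, and you invoke Fatou for the lower bound where the paper uses the pointwise monotone domination $\hat W^n_{n+m,\rho}\ge(4\pi\rho)^{-n/2}e^{-|\mathbf{x}|^2/4\rho}$ directly.

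One remark on your ``main obstacle'' paragraph: the assertion that $\bar\lambda_{LY}^{n+2m}[\Sigma\times\Real^{2m}]\le\lambda_V^n[\Sigma]$ is \emph{false} for fixed $m$ is not right. What is true is that the direct test-function comparison at a single $p$ only yields $\mu_p\le r_p\,\lambda_V^n$ with $r_p\ge 1$, which is weaker; but combined with the monotonicity of $m\mapsto\mu_{n+m}$ (Proposition~\ref{StabConfVolMonDimProp}) and the limit, one does get $\mu_p\le\bar\lambda_{LY}^\infty\le\lambda_V^n$ for every $p$. So the finite-$m$ inequality is actually valid — the limit is needed only to eliminate the slack in the one-shot test-function estimate, not because the statement fails at finite $m$. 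This does not affect the validity of your proof, which never relies on the claim.
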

These inequalities are sharp as, when $\Sigma$ is a self-shrinker, appealing to \eqref{SelfShrinkEntEqualEqn} yields
$$
	\lambda_{CM}^n[\Sigma]= \bar{\lambda}_{LY}^\infty[\Sigma]= \lambda_{V}^n[\Sigma].
$$
Theorem \ref{LYCMIneqThm} follows immediately from this, Proposition \ref{StabConfVolMonDimProp} and Theorem \ref{CMLYVTIneqThm}.

\section{Properties of elements of $\mathcal{VT}_1^+(T)$ and their heat flows}
In \cite{hamiltonMatricHarnackEstimate1993}, Hamilton used a (tensor) maximum principle to prove the matrix Harnack inequality \eqref{LogConvexEqn} for positive solutions of the heat equation.  We use the same idea to establish a linear in (real) time growth of the measure of convexity, i.e., the $t$ in \eqref{LogConvexEqn}.  In \cite{hamiltonMatricHarnackEstimate1993} the underlying manifold is closed and so, as we work in Euclidean space, we instead adapt an argument of \cite{caoMatrixLiYauHamiltonEstimates2005}.  While \cite{hamiltonMatricHarnackEstimate1993} treats quite general positive solutions of the heat equation, in order to simplify exposition we focus on the subclass of solutions given by \eqref{HeatFlowEqn} with initial data in $\mathcal{VT}_1^+(T)$, $T>0$. As such, we establish some basic properties of elements of $\mathcal{VT}_1^+(T)$ and of their evolution by \eqref{HeatFlowEqn}.

We first give an elementary proof that \eqref{LogConvexEqn} holds for elements of $\mathcal{H}_1^+(t)$.
\begin{lem}\label{LogConvexLem}
	For $u_0\in C^0\cap L^1(\Real^N)$ with $u_0\geq 0$ and $\int_{\Real^N} u_0 d\mathcal{L}^N=1$, the function
$
	U(t, \cdot )=\mathcal{H}_t u_0\in \mathcal{H}^+_1(t)$ given by \eqref{HeatFlowEqn}
	satisfies, for $t>0$,
	$$
\nabla_\Real^2 \log U(t,\mathbf{x})\geq  -\frac{1}{2t} g_{\Real}.	
	$$
\end{lem}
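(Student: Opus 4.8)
The plan is to reduce the estimate to the elementary fact that a two-sided Laplace transform of a nonnegative density is log-convex. First I would record the preliminaries: since $u_0\ge 0$ with $\int_{\Real^N}u_0\, d\LL^N=1$ and the Gaussian kernel $\mathcal{H}(t,\mathbf{x},\mathbf{y})$ is strictly positive for $t>0$, the integrand in \eqref{HeatFlowEqn} is nonnegative and positive on a set of positive measure, so $U(t,\mathbf{x})>0$ for all $t>0$ and $\mathbf{x}\in\Real^N$; moreover the $\mathbf{x}$-derivatives of $\mathbf{y}\mapsto\mathcal{H}(t,\mathbf{x},\mathbf{y})$ decay rapidly and are bounded uniformly for $\mathbf{x}$ in compact sets, so differentiation under the integral sign is justified and $U(t,\cdot)\in C^\infty(\Real^N)$. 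Hence $\log U(t,\cdot)$ is a well-defined smooth function and the asserted inequality is meaningful.

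Next I would carry out the reduction. Since $\nabla^2_\Real\bigl(\tfrac{1}{4t}|\mathbf{x}|^2\bigr)=\tfrac{1}{2t}g_\Real$, the inequality $\nabla^2_\Real\log U(t,\cdot)\ge -\tfrac{1}{2t}g_\Real$ is equivalent to the convexity of $\mathbf{x}\mapsto \log U(t,\mathbf{x})+\tfrac{1}{4t}|\mathbf{x}|^2$, i.e.\ to the log-convexity in $\mathbf{x}$ of $V(t,\mathbf{x}):=U(t,\mathbf{x})\,e^{|\mathbf{x}|^2/(4t)}$. Using the identity $-|\mathbf{x}-\mathbf{y}|^2+|\mathbf{x}|^2=2\mathbf{x}\cdot\mathbf{y}-|\mathbf{y}|^2$ one finds
$$
V(t,\mathbf{x})=(4\pi t)^{-N/2}\int_{\Real^N}u_0(\mathbf{y})\,e^{-|\mathbf{y}|^2/(4t)}\,e^{\mathbf{x}\cdot\mathbf{y}/(2t)}\,d\LL^N(\mathbf{y}),
$$
so that $V(t,\cdot)$ is a constant multiple of the bilateral Laplace transform of the nonnegative integrable density $\mathbf{y}\mapsto u_0(\mathbf{y})\,e^{-|\mathbf{y}|^2/(4t)}$; the Gaussian factor makes the integrals defining $V$, $\nabla_{\mathbf{x}} V$, and $\nabla^2_{\mathbf{x}} V$ absolutely convergent and permits differentiation under the integral sign.

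Finally I would run the standard variance computation. Fixing $t>0$ and $\mathbf{x}\in\Real^N$ and letting $d\mu_{\mathbf{x}}(\mathbf{y}):=V(t,\mathbf{x})^{-1}(4\pi t)^{-N/2}u_0(\mathbf{y})\,e^{-|\mathbf{y}|^2/(4t)}\,e^{\mathbf{x}\cdot\mathbf{y}/(2t)}\,d\LL^N(\mathbf{y})$, which is a probability measure on $\Real^N$, one gets $\nabla_{\mathbf{x}}\log V=\tfrac{1}{2t}\int_{\Real^N}\mathbf{y}\,d\mu_{\mathbf{x}}$ and
$$
\nabla^2_{\mathbf{x}}\log V=\frac{1}{4t^2}\left(\int_{\Real^N}\mathbf{y}\otimes\mathbf{y}\,d\mu_{\mathbf{x}}-\Bigl(\int_{\Real^N}\mathbf{y}\,d\mu_{\mathbf{x}}\Bigr)^{\otimes 2}\right),
$$
which is $\tfrac{1}{4t^2}$ times the covariance matrix of $\mathbf{y}$ under $\mu_{\mathbf{x}}$; for any $\mathbf{v}\in\Real^N$ the quadratic form $\mathbf{v}^{T}(\nabla^2_{\mathbf{x}}\log V)\mathbf{v}=\tfrac{1}{4t^2}\mathrm{Var}_{\mu_{\mathbf{x}}}(\mathbf{v}\cdot\mathbf{y})\ge 0$ by the Cauchy--Schwarz inequality in $L^2(\mu_{\mathbf{x}})$. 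Hence $\nabla^2_{\mathbf{x}}\log V\ge 0$, and therefore $\nabla^2_\Real\log U(t,\cdot)=\nabla^2_{\mathbf{x}}\log V-\tfrac{1}{2t}g_\Real\ge -\tfrac{1}{2t}g_\Real$, as claimed. There is no genuine obstacle here: the only points requiring (routine) care are the strict positivity and smoothness of $U$ and the justification of differentiating under the integral sign, both immediate from $u_0\in L^1$ together with the strict positivity and rapid decay of the Gaussian kernel and its derivatives.
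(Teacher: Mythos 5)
Your proof is correct and is essentially the same as the paper's: both amount to differentiating under the integral sign, observing that $u_0(\mathbf{y})\mathcal{H}(t,\mathbf{x},\mathbf{y})\,d\mathcal{L}^N(\mathbf{y})/U(t,\mathbf{x})$ is a probability measure, and applying Cauchy--Schwarz (equivalently, nonnegativity of a variance) to get the claimed lower bound. Your repackaging via the log-convexity of the bilateral Laplace transform of $u_0(\mathbf{y})e^{-|\mathbf{y}|^2/(4t)}$ is a slightly cleaner way to organize the identical computation, but it is not a different argument.
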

\begin{proof}
	The hypotheses on $u_0$ ensure that, for $t>0$, 
	$$
	U>0 \mbox{ and }\int_{\Real^N} U(t, \mathbf{x})d\mathcal{L}^N=1.
	$$   
	As $u_0$ is integrable and, for $t>0$, $|\nabla^i_{\Real, \mathbf{x}} \mathcal{H}(t, \mathbf{x}, \mathbf{y})|\leq C_{N, i} t^{-\frac{N+2i}{2}}$,  one may differentiate under the integral sign and obtain for any parallel unit vector field $\mathbf{v}$ that, 
	\begin{align*}
		\nabla_\Real^2 &U(t, \mathbf{x})(\mathbf{v}, \mathbf{v}) = \int_{\Real^N} u_0(\mathbf{y}) \nabla^2_{\Real, \mathbf{x}} \mathcal{H}(t, \mathbf{x}, \mathbf{y}) (\mathbf{v}, \mathbf{v}) d\mathcal{L}^N(\mathbf{y})\\
		&=\int_{\Real^N} u_0(\mathbf{y}) \left( \frac{(\mathbf{v} \cdot\nabla_{\Real, \mathbf{x}} \mathcal{H}(t, \mathbf{x}, \mathbf{y}))^2} {\mathcal{H}(t,\mathbf{x}, \mathbf{y})^2}-\frac{|\mathbf{v}|^2}{2t} \right) \mathcal{H}(t, \mathbf{x}, \mathbf{y}) d\mathcal{L}^N(\mathbf{y})\\
		&= -\frac{U(t, \mathbf{x}) }{2t} |\mathbf{v}|^2 +\int_{\Real^N}  \left( u_0^{\frac{1}{2}}(\mathbf{y})  \frac{\mathbf{v} \cdot\nabla_{\Real, \mathbf{x}} \mathcal{H}(t, \mathbf{x}, \mathbf{y})} {\mathcal{H}(t,\mathbf{x}, \mathbf{y})} \right)^2 \mathcal{H}(t,\mathbf{x}, \mathbf{y}) d\mathcal{L}^N(\mathbf{y})  \\
		&\geq -\frac{U(t, \mathbf{x}) }{2t} |\mathbf{v}|^2 +U^{-1}(t, \mathbf{x})\left(\int_{\Real^N} u_0(\mathbf{y})  \frac{\mathbf{v} \cdot\nabla_{\Real, \mathbf{x}} \mathcal{H}(t, \mathbf{x}, \mathbf{y})} {\mathcal{H}(t,\mathbf{x}, \mathbf{y})}\mathcal{H}(t,\mathbf{x}, \mathbf{y})d\mathcal{L}^N(\mathbf{y})\right)^2\\
		&=-\frac{U(t, \mathbf{x}) }{2t} |\mathbf{v}|^2+U^{-1}(t, \mathbf{x})(\mathbf{v}\cdot \nabla_\Real U(t, \mathbf{x}))^2.
	\end{align*}
That is, for $t>0$, we get the claimed inequality
$$
\nabla^2_\Real \log U(t, \mathbf{x}) \geq - \frac{1}{2t} g_{\Real},
$$
\end{proof}

We next establish a useful pointwise bound on elements of $\mathcal{VT}_1^+(T)$.
\begin{lem}\label{L12PWEstLem}
There is a $C_0=C_0(N)>0$ so that, for $f\in \mathcal{VT}_1^+(T)$ and $\mathbf{x}_0\in \Real^N$, 
$$
 f(\mathbf{x}_0)\leq  C_0 T^{-\frac{N}{2}} \int_{B_{\sqrt{T}}(\mathbf{x}_0)} f d\mathcal{L}^N\leq   C_0 T^{-\frac{N}{2}}.
 $$
\end{lem}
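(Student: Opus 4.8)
The plan is to use the fact that membership in $\mathcal{VT}_1^+(T)$ is, by definition, precisely a one-sided Hessian bound on $\log f$, and that such a bound is equivalent to the convexity of a log-quadratic modification of $f$; once this is in place, the estimate follows from the classical sub-mean-value inequality for convex functions together with Jensen's inequality. First I would unpack the hypothesis $\tau(f)\geq T$. Since $g_\Real\geq 0$, the set of $\tau\geq 0$ for which $2\tau\nabla^2_\Real\log f+g_\Real\geq 0$ is an interval containing $[0,\tau(f))$ (a smaller $\tau$ works by convex-combining with $g_\Real$), so for every $T'<T$ we have $2T'\nabla^2_\Real\log f+g_\Real\geq 0$, and letting $T'\nearrow T$ gives $\nabla^2_\Real\log f\geq-\tfrac1{2T}g_\Real$ on all of $\Real^N$. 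Equivalently, the function
$$
g(\mathbf{x}):=\log f(\mathbf{x})+\frac{|\mathbf{x}-\mathbf{x}_0|^2}{4T}
$$
satisfies $\nabla^2_\Real g\geq 0$, i.e.\ is convex on $\Real^N$; note $g\in C^2$ because $f\in C^2$ and $f>0$, and $g$ is bounded, hence integrable, on the compact ball $\overline{B_{\sqrt T}(\mathbf{x}_0)}$.

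Next I would invoke the elementary fact that a convex function lies below its averages over balls centered at the base point: starting from $g(\mathbf{x}_0)\leq\tfrac12\big(g(\mathbf{x}_0+\mathbf{v})+g(\mathbf{x}_0-\mathbf{v})\big)$, integrating in $\mathbf{v}$ over $B_{\sqrt T}(\OO)$ and using the symmetry of the ball yields
$$
g(\mathbf{x}_0)\leq \frac{1}{|B_{\sqrt T}(\OO)|}\int_{B_{\sqrt T}(\mathbf{x}_0)} g\,d\mathcal{L}^N.
$$
Since $g(\mathbf{x}_0)=\log f(\mathbf{x}_0)$ and, by the scaling of the second moment of a ball, $\frac{1}{|B_{\sqrt T}(\OO)|}\int_{B_{\sqrt T}(\mathbf{x}_0)}\frac{|\mathbf{x}-\mathbf{x}_0|^2}{4T}\,d\mathcal{L}^N=\frac{N}{4(N+2)}=:c_N<\tfrac14$, this reads
$$
\log f(\mathbf{x}_0)\leq \frac{1}{|B_{\sqrt T}(\OO)|}\int_{B_{\sqrt T}(\mathbf{x}_0)}\log f\,d\mathcal{L}^N + c_N .
$$

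Finally, Jensen's inequality (concavity of $\log$) bounds the average of $\log f$ on the right by $\log$ of the average of $f$, so exponentiating gives
$$
f(\mathbf{x}_0)\leq e^{c_N}\,\frac{1}{|B_{\sqrt T}(\OO)|}\int_{B_{\sqrt T}(\mathbf{x}_0)} f\,d\mathcal{L}^N=\frac{e^{c_N}}{|B_1(\OO)|}\,T^{-\frac N2}\int_{B_{\sqrt T}(\mathbf{x}_0)} f\,d\mathcal{L}^N,
$$
which is the first claimed inequality with $C_0=C_0(N)=e^{1/4}|B_1(\OO)|^{-1}$. The second inequality is then immediate: $f\geq 0$ and $\int_{\Real^N} f\,d\mathcal{L}^N=1$ force $\int_{B_{\sqrt T}(\mathbf{x}_0)} f\,d\mathcal{L}^N\leq 1$.

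I do not expect a real obstacle here: this is the standard mean-value/Harnack estimate for functions whose logarithm has a Hessian bounded below, and the only points that need a word of care are the passage from the non-strict bound $\tau(f)\geq T$ to $\nabla^2_\Real\log f\geq-\tfrac1{2T}g_\Real$ at the endpoint $T$ (handled by the limit $T'\nearrow T$ above), and the harmless dimensional constant $c_N$ arising from averaging $|\mathbf{x}-\mathbf{x}_0|^2$ over $B_{\sqrt T}(\mathbf{x}_0)$.
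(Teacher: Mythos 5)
Your proof is correct and takes essentially the same approach as the paper: you use the Hessian lower bound implicit in $\tau(f)\geq T$ to show that $\log f(\mathbf{x})+\tfrac{|\mathbf{x}-\mathbf{x}_0|^2}{4T}$ lies below its ball average, then close with Jensen's inequality and the unit-mass condition. The only cosmetic difference is that the paper traces the Hessian bound and invokes sub-mean-value for subharmonic functions, whereas you keep the full convexity; you also correctly compute the second-moment average as $\tfrac{N}{4(N+2)}$ (the paper's displayed $\tfrac{1}{4(N+2)}$ is a typo), which is why your uniform choice $C_0=e^{1/4}\omega_N^{-1}$ is safe.
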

\begin{proof}
	The fact that $f\in \mathcal{VT}_1^+(T)$ implies
	$$
	-\frac{N}{2T}\leq \Delta_{\Real} \log f .
	$$
	Hence, for the given $\mathbf{x}_0$,  the function
	$$
	F_{\mathbf{x}_0}(\mathbf{x})= \log f(\mathbf{x})+\frac{|\mathbf{x}-\mathbf{x}_0|^2}{4T}
	$$
	is sub harmonic and so, by the mean value inequality, for any $R>0$,
	$$
	\log f (\mathbf{x}_0)=\log F_{\mathbf{x}_0}(\mathbf{x}_0)\leq \frac{1}{\omega_N R^N} \int_{B_R(\mathbf{x}_0)} \log f + \frac{|\mathbf{x}-\mathbf{x}_0|^2}{4T}d\mathcal{L}^N,
	$$
	where $\omega_N=|B_1|$, the volume of unit $N$-ball.
	Taking $R=\sqrt{T}$, yields
	$$ 
	\log f(\mathbf{x}_0)\leq \frac{1}{\omega_N T^{\frac{N}{2}}} \int_{B_{\sqrt{T}}(\mathbf{x}_0)} \log f d\mathcal{L}^n + \frac{1}{4 (N+2)}.
	$$
	It follows from Jensen's inequality that
	$$
	f(\mathbf{x}_0)\leq C_0 T^{-\frac{N}{2}}  \int_{B_{\sqrt{T}}(\mathbf{x}_0)} f d\mathcal{L}^N, \mbox{ where } C_0=C_0(N)= \omega_N^{-1}\exp( \frac{1}{4 (N+2)}).
	$$
  To complete the proof, observe that as $f\geq 0$, 
	$$
	f(\mathbf{x}_0)\leq  \int_{B_{\sqrt{T}}(\mathbf{x}_0)} f d\mathcal{L}^N\leq  C_0 T^{-\frac{N}{2}} \int_{\Real^N} f d\mathcal{L}^N=   C_0 T^{-\frac{N}{2}}.
	$$
\end{proof}
We now establish some useful estimates for elements of $\mathcal{VT}_1^+(T)$.
\begin{lem}\label{BoundLem}
	Suppose $u\in \mathcal{VT}_1^+(T)$, $T>0$.  The following hold:
	\begin{enumerate}
		\item  $\lim_{|\mathbf{x}|\to \infty} u(\mathbf{x})=0$ and there is a point $\mathbf{z}_0=\mathbf{z}_0(u)\in \Real^N$ so that
		$$0< u(\mathbf{z}_0) e^{-\frac{|\mathbf{x}-\mathbf{z}_0|^2}{4T}}\leq u(\mathbf{x})\leq u(\mathbf{z}_0) \leq (4\pi T)^{-\frac{N}{2}};$$
		\item $ \lim_{|\mathbf{x}|\to \infty} |\nabla_\Real u(\mathbf{x})|=0$ and	
		$$|\nabla_\Real u(\mathbf{x})|^2\leq \frac{4\pi}{e} (4\pi T)^{-\frac{N+2}{2}} u(\mathbf{x}) \leq \frac{4\pi}{e}  (4\pi T)^{-N-1}; $$
		\item $|\nabla^2_\Real u|\in L^1(\Real^N)$ and there is a constant $C_1=C_1(N)>0$ so
		$$
		\int_{\Real^N} |\nabla^2_\Real u|d \mathcal{L}^N\leq C_1 T^{-1}.
		$$
	\end{enumerate}
\end{lem}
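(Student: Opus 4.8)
The plan is to derive all three assertions, in order, from the single analytic input built into the definition of $\mathcal{VT}_1^+(T)$, namely the pointwise matrix Harnack inequality $2T\nabla^2_\Real\log u+g_\Real\geq 0$, together with the normalization $\int u\,d\mathcal{L}^N=1$.

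\emph{Part (1).} I would start by upgrading Lemma \ref{L12PWEstLem} to decay at infinity: since $u\in L^1(\Real^N)$, the quantity $\int_{B_{\sqrt T}(\mathbf x_0)}u\,d\mathcal{L}^N$ tends to $0$ as $|\mathbf x_0|\to\infty$ (dominated convergence, as the characteristic function of $B_{\sqrt T}(\mathbf x_0)$ goes to $0$ pointwise a.e.), so Lemma \ref{L12PWEstLem} forces $\lim_{|\mathbf x|\to\infty}u(\mathbf x)=0$. Being continuous and positive, $u$ then attains a positive maximum at some $\mathbf z_0\in\Real^N$, where $\nabla_\Real u(\mathbf z_0)=0$. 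The function $\psi(\mathbf x)=\log u(\mathbf x)+\tfrac{1}{4T}|\mathbf x-\mathbf z_0|^2$ satisfies $\nabla^2_\Real\psi=\nabla^2_\Real\log u+\tfrac{1}{2T}g_\Real\geq 0$, hence is convex, and $\nabla_\Real\psi(\mathbf z_0)=0$; therefore $\mathbf z_0$ is its global minimum, which gives exactly $\log u(\mathbf x)+\tfrac{1}{4T}|\mathbf x-\mathbf z_0|^2\geq\log u(\mathbf z_0)$, i.e. the claimed lower bound $u(\mathbf z_0)e^{-|\mathbf x-\mathbf z_0|^2/4T}\leq u(\mathbf x)$. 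The bound $u(\mathbf x)\leq u(\mathbf z_0)$ is the maximum property, and integrating the lower bound yields $1=\int u\geq u(\mathbf z_0)\int e^{-|\mathbf x-\mathbf z_0|^2/4T}\,d\mathcal{L}^N=u(\mathbf z_0)(4\pi T)^{N/2}$, so $u(\mathbf z_0)\leq(4\pi T)^{-N/2}$.

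\emph{Part (2).} Fix $\mathbf x$ with $\nabla_\Real u(\mathbf x)\neq 0$ (otherwise there is nothing to prove) and put $\mathbf v=\nabla_\Real u(\mathbf x)/|\nabla_\Real u(\mathbf x)|$, $a=|\nabla_\Real u(\mathbf x)|/u(\mathbf x)>0$. Restricting the Harnack inequality to the direction $\mathbf v$, the function $h(s)=\log u(\mathbf x+s\mathbf v)+\tfrac{s^2}{4T}$ is convex, so $h(s)\geq h(0)+h'(0)s$ with $h(0)=\log u(\mathbf x)$, $h'(0)=a$, giving $u(\mathbf x+s\mathbf v)\geq u(\mathbf x)\exp\!\big(as-\tfrac{s^2}{4T}\big)$. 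Evaluating at $s=2aT$ produces $u(\mathbf x+2aT\mathbf v)\geq u(\mathbf x)e^{a^2T}$, and comparing with $\sup u\leq(4\pi T)^{-N/2}$ from Part (1) yields $a^2T\leq\log\!\big((4\pi T)^{-N/2}/u(\mathbf x)\big)$, i.e. $|\nabla_\Real u(\mathbf x)|^2\leq\tfrac{u(\mathbf x)^2}{T}\log\!\big((4\pi T)^{-N/2}/u(\mathbf x)\big)$. Writing $w=(4\pi T)^{N/2}u(\mathbf x)\in(0,1]$ and invoking the elementary inequality $w\log(1/w)\leq 1/e$ converts this into $|\nabla_\Real u(\mathbf x)|^2\leq\tfrac{4\pi}{e}(4\pi T)^{-\frac{N+2}{2}}u(\mathbf x)$; the final bound and $\lim_{|\mathbf x|\to\infty}|\nabla_\Real u|=0$ follow by feeding in $u\leq(4\pi T)^{-N/2}$ and Part (1).

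\emph{Part (3).} Expanding $\nabla^2_\Real\log u=u^{-1}\nabla^2_\Real u-u^{-2}\,\nabla_\Real u\otimes\nabla_\Real u\geq-\tfrac{1}{2T}g_\Real$ and discarding the nonnegative term $\nabla_\Real u\otimes\nabla_\Real u$ gives $\nabla^2_\Real u\geq-\tfrac{u}{2T}g_\Real$, hence for every unit vector $\mathbf v$, $|\nabla^2_\Real u(\mathbf v,\mathbf v)|\leq\nabla^2_\Real u(\mathbf v,\mathbf v)+\tfrac{u}{T}$ pointwise. I would then test against a cutoff $\chi_R=\varphi(|\cdot|/R)$ with $\varphi$ nonincreasing, $\varphi\equiv 1$ on $[0,1]$, $\varphi\equiv 0$ on $[2,\infty)$, so that $|\nabla^2_\Real\chi_R|\leq CR^{-2}$ on $B_{2R}\setminus B_R$: integrating by parts twice gives $\int\chi_R\,\nabla^2_\Real u(\mathbf v,\mathbf v)\,d\mathcal{L}^N=\int u\,\nabla^2_\Real\chi_R(\mathbf v,\mathbf v)\,d\mathcal{L}^N$, whose absolute value is $\leq CR^{-2}\int_{B_{2R}\setminus B_R}u\to 0$. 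Since $\int\chi_R|\nabla^2_\Real u(\mathbf v,\mathbf v)|\leq\int\chi_R\,\nabla^2_\Real u(\mathbf v,\mathbf v)+\tfrac1T\int\chi_R u$ and $\chi_R\nearrow 1$, monotone convergence gives $\int|\nabla^2_\Real u(\mathbf v,\mathbf v)|\,d\mathcal{L}^N\leq\tfrac1T$ for all unit $\mathbf v$. Polarizing the entries via $\nabla^2_\Real u(e_i,e_j)=\tfrac12\big(\nabla^2_\Real u(\mathbf v^+_{ij},\mathbf v^+_{ij})-\nabla^2_\Real u(\mathbf v^-_{ij},\mathbf v^-_{ij})\big)$ with $\mathbf v^\pm_{ij}=(e_i\pm e_j)/\sqrt2$ then yields $\int|\nabla^2_\Real u|\,d\mathcal{L}^N\leq C_1(N)\,T^{-1}$, and in particular $|\nabla^2_\Real u|\in L^1$.

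\emph{Main obstacle.} The substantive points are the two non-compactness issues. In Part (1) one must know the maximum is actually attained; this is precisely what the $L^1$-tail argument combined with Lemma \ref{L12PWEstLem} provides, and it is the linchpin making the convexity argument global. In Part (3) the difficulty is that $\nabla^2_\Real u$ is not a priori integrable, so one cannot simply integrate the pointwise inequality $\nabla^2_\Real u(\mathbf v,\mathbf v)+\tfrac{u}{T}\geq 0$ over $\Real^N$; the cutoff-and-limit procedure above is needed to produce $L^1$ control with the sharp $T$-scaling. Everything else reduces to convexity of $\log u+\tfrac{1}{4T}|\cdot|^2$ and the normalization $\int u=1$.
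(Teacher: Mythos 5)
Your proof is correct and follows essentially the same route as the paper's: the matrix Harnack convexity $2T\nabla^2_\Real\log u+g_\Real\geq 0$ together with the mass-one normalization drives all three parts, with the $L^1$-tail argument plus Lemma~\ref{L12PWEstLem} giving decay and hence the attained maximum, and a cutoff integration by parts giving $|\nabla^2_\Real u|\in L^1$. The two tactical variations are worth a note but do not change the substance. In Part (2), the paper constructs the tilted Gaussian barrier $\exp(Q)\leq u$ centered at $\mathbf x_0+2T\mathbf v$ and integrates it against the mass-one constraint to get $(4\pi T)^{N/2}u(\mathbf x_0)e^{T|\mathbf v|^2}\leq 1$; you instead restrict the convexity to the gradient ray, evaluate at $s=2aT$, and compare against $\sup u\leq(4\pi T)^{-N/2}$ from Part (1) --- this is a one-dimensional version that introduces a (harmless) dependence of Part (2) on Part (1) but lands on the same sharp constant $\tfrac{4\pi}{e}(4\pi T)^{-\frac{N+2}{2}}$. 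In Part (3), the paper integrates $\Delta_\Real u$ against a quadratic cutoff and then passes to the full Hessian via the pointwise eigenvalue lower bound, whereas you integrate $\nabla^2_\Real u(\mathbf v,\mathbf v)$ directionally and polarize; both yield $C_1(N)T^{-1}$, only with different dimensional constants.
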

\begin{rem}
		In particular, for  $u\in \mathcal{VT}_1^+(T)$, $T>0$,  $\tau(u)<\frac{1}{4\pi}u(\mathbf{z}_0)^{-\frac{2}{N}}<\infty$.
\end{rem}
\begin{proof}
	As $u\in L^1(\Real^N)$ and $u>0$, for every $\epsilon>0$, there is $R_{\epsilon}$ so that
	$$
 0<	\int_{\Real^N\setminus B_{R_\epsilon}} u d\mathcal{L}^N <\epsilon.
	$$
It follows from Lemma \ref{L12PWEstLem} that, for any $\mathbf{x}_0\not\in B_{R_\epsilon+\sqrt{T}}$,
	$$
	u(\mathbf{x}_0)\leq  C_0 T^{-\frac{N}{2}}\int_{\Real^N\setminus B_{R_\epsilon}} u d\mathcal{L}^N \leq C_0 T^{-\frac{N}{2}} \epsilon.
	$$
In other words,
	$$
	\lim_{|\mathbf{x}| \to \infty} u(\mathbf{x})=0.
	$$
As $u$ is also continuous, there is a $\mathbf{z}_0$ so that
	$$
	\max_{\mathbf{x}\in \Real^N} u(\mathbf{x})=u(\mathbf{z}_0).
	$$
		Hence, $\nabla_\Real u(\mathbf{z}_0)=\mathbf{0}$.
	Observe that as $U\in \mathcal{VT}_1^+(T)$, $u$ satisfies the convexity property given by \eqref{LogConvexEqn} with $t=T$. This implies that
	$$
	-\frac{|\mathbf{x}-\mathbf{z}_0|^2}{4T} +\log u(\mathbf{z}_0) \leq \log u(\mathbf{x}).
	$$
	This yields the claimed lower bound $u$. The upper bound on $u(\mathbf{z}_0)$, and hence on $u$, follows by integrating  over $\Real^N$ and using that $u$ has mass one. We have established Item (1).

	For the gradient estimate pick $\mathbf{x}_0\in \Real^N$. 
	Set $\nabla_\Real \log  u(\mathbf{x}_0)=\mathbf{v}$ and 
	$$
	Q(\mathbf{x})=-\frac{1}{4T} |\mathbf{x}-\mathbf{x}_0-2T \mathbf{v}|^2 +\log u(\mathbf{x}_0)+T|\mathbf{v}|^2.
	$$
	Thus,  $Q(\mathbf{x}_0)=\log u(\mathbf{x}_0),$ $\nabla_\Real Q(\mathbf{x}_0)=\mathbf{v}=\nabla_\Real \log u(\mathbf{x}_0)$ and $\nabla^2_\Real Q=-\frac{g_\Real}{2T}$.
	The convexity  of $\log u$, i.e., \eqref{LogConvexEqn}, ensures that
	$$
	Q(\mathbf{x})\leq \log u(\mathbf{x}).
	$$
Hence, integrating $\exp(Q)$ and using the mass of $u$ is one yields, 
	$$
	(4\pi T)^{\frac{N}{2}}u(\mathbf{x}_0) e^{T|\mathbf{v}|^2} \leq 1.
	$$
Using the elementary identity
	$$
	-x \log x \leq e^{-1}
	$$
	and the upper bound on $u$ yields the claimed bounds
	\begin{align*}
		|\nabla u(\mathbf{x}_0)|^2&\leq - T^{-1} u^2(\mathbf{x}_0)\log \left((4\pi T)^{\frac{N}{2}} u(\mathbf{x}_0)\right)\\
		&\leq \frac{4\pi}{e} (4\pi T)^{-\frac{N}{2}-1}u(\mathbf{x}_0)\leq \frac{4\pi}{e} (4\pi T)^{-N-1}.
	\end{align*}
Moreover, as $u(\mathbf{x})$ tends to zero as $|\mathbf{x}|\to \infty$ this implies same is true of $\nabla u(\mathbf{x})$. This completes proof of Item (2).
While we don't use it,  we note the lower bound on $u$ yields
	$$
|\nabla_\Real u(\mathbf{x}_0)|^2\leq 	\frac{|\mathbf{x}_0-\mathbf{z}_0|^2}{4T^2} u^2(\mathbf{x}_0). 
	$$	
	
	
	For Item (3), we first observe that, in the sense of symmetric matrices, 
	$$
	-\frac{1}{2T} g_\Real \leq \nabla^2_\Real \log u = \frac{\nabla^2_\Real u}{ u} -\frac{\nabla_\Real u \otimes \nabla_\Real u}{u^2}\leq \frac{\nabla^2_\Real u}{ u}. 
	$$
By Item (1), $0<u<(4\pi T)^{-\frac{N}{2}}$ so
\begin{equation} \label{LinLowBndEqn}
	-  2\pi (4\pi T)^{-\frac{N+2}{2}} g_\Real\leq -\frac{u}{2T} g_\Real \leq \nabla^2_\Real u \mbox{ and } -\frac{N}{2T } u \leq \Delta_\Real u.
\end{equation}
An immediate consequence of this is that
$$
 |\Delta_\Real u| \leq \Delta_\Real u +N T^{-1}u.
	$$
For $R>0$, let $\phi_R=(1-(4R)^{-2}|\mathbf{x}|^2)^2$ this is a smooth function which vanishes to first order on $\partial B_{2R}$ and satisfies $\phi_R\leq 1$ and $\Delta_\Real \phi_R \leq C(N)R^{-2}$ on $B_{2R}$ where $C(N)>0$ is a dimensional constant. 
The above inequalities and an integration by parts implies
\begin{align*}
 \frac{1}{4} &\int_{B_R} |\Delta_\Real u| d\mathcal{L}^N \leq \int_{B_{2R}} \phi_R |\Delta_\Real u| d\mathcal{L}^N \leq \int_{B_{2R}} \phi_R \left( \Delta_\Real u +N T^{-1} u \right) d\mathcal{L}^N\\
 &= \int_{B_{2R}} \left(\Delta_\Real \phi_R +N T^{-1}\phi_R\right) u d\mathcal{L}^N\leq \left(C(N) R^{-2}+N T^{-1}\right) \int_{B_{2R}}  u d\mathcal{L}^N.
\end{align*}
As $u$ has mass one, sending $R\to \infty$ shows that $\Delta_\Real u\in L^1(\Real^N)$ and 
$$
\int_{\Real^N} |\Delta_\Real u| d\mathcal{L}^N \leq 4 NT^{-1}.
$$
	Finally, we observe that the lower bounds on the eigenvalues  of $\nabla^2_\Real u$ imply that
	$$
	|\nabla^2_\Real u|^2\leq N\left(|\Delta_\Real u|+\frac{(N-1) u}{2T}\right)^2 \mbox{ and so } |\nabla^2_\Real u|\leq \sqrt{N} |\Delta_\Real u|+ \frac{N^{\frac{3}{2}}}{2T} u.
	$$
The final claim follows after making an appropriate choice of $C_1=C_1(N)>0$.
\end{proof}
Next we establish some properties of the evolution by \eqref{HeatFlowEqn} of elements in $\mathcal{VT}_1^+(\tau_0)$.
\begin{lem}\label{HeatFlowBoundLem}
	For $\tau_0>0$ and $u_0\in \mathcal{VT}_1^+(\tau_0)$, let $U(t, \cdot )=\mathcal{H}_t u_0$
	be the solution of the heat equation on $[0, \infty)\times \Real^N$ given by \eqref{HeatFlowEqn}.   This solution has the following properties:
	\begin{enumerate}
		\item $\int_{\Real^N} U(t, \mathbf{x})=1$, for all $t\geq 0$;
		\item $U(t,\mathbf{x})\geq u_0(\mathbf{z}_0) (4\pi (t+\tau_0))^{-\frac{N}{2}} e^{-\frac{|\mathbf{x}-\mathbf{z}_0|^2}{4(t+\tau_0)}}>0.$
			\item $U(t,\mathbf{x})+\tau_0^{\frac{1}{2}} |\nabla_\Real U(t, \mathbf{x})|\leq C_2 \tau_0^{-\frac{N}{2}}$, for all $t\geq 0$; 
			\item $|\nabla^2_\Real U(t, \mathbf{x})|\leq C_2 \tau_0^{-1} t^{-\frac{N}{2}}$ for all $t>0$;
		\end{enumerate}
	Here $C_2=C_2(N)>0$ and $\mathbf{z}_0=\mathbf{z}_0(u_0)$ is given by Lemma \ref{BoundLem}.
	Moreover,
	\begin{enumerate}[resume]
		\item $\nabla^2_\Real U(t, \mathbf{x})\geq -\frac{U(t,\mathbf{x})}{2\tau_0} g_\Real$ for all $t\geq 0$; 
			\item $\lim_{t\to 0^+} U(t, \cdot) =u_0$, in $L^1(\Real^N)$, $C^1(\Real^N)$ and $C^2_{loc}(\Real^N)$;
		\item $U, \nabla_\Real U, \nabla^2_\Real U\in C^0([0, \infty)\times \Real^N).$
	\end{enumerate}
\end{lem}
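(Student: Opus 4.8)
The plan is to deduce all seven items from the estimates on $u_0$ furnished by Lemmas \ref{L12PWEstLem} and \ref{BoundLem} together with two elementary facts about the heat semigroup. First, that it commutes with the constant-coefficient operators $\nabla_\Real$ and $\nabla^2_\Real$:
$$
\nabla_\Real U(t,\cdot)=\mathcal{H}_t(\nabla_\Real u_0),\qquad \nabla^2_\Real U(t,\cdot)=\mathcal{H}_t(\nabla^2_\Real u_0)\qquad (t>0),
$$
which is legitimate since $u_0\in C^2$ with $\nabla_\Real u_0$ bounded and $\nabla^2_\Real u_0\in L^1(\Real^N)$ (Lemma \ref{BoundLem}), so that differentiating under the integral in \eqref{HeatFlowEqn} and the attendant integrations by parts carry no boundary contribution. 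Second, that $\mathcal{H}_t$ is a Markov operator with positive kernel obeying $\|\mathcal{H}(t,\mathbf{x},\cdot)\|_{L^\infty}=(4\pi t)^{-N/2}$, whence $\|\mathcal{H}_t g\|_{L^\infty}\le\min\{\|g\|_{L^\infty},\,(4\pi t)^{-N/2}\|g\|_{L^1}\}$ and $\mathcal{H}_t$ preserves pointwise positive semidefiniteness of matrix-valued functions. Item (1) is then conservation of mass, following from Tonelli's theorem and $\int_{\Real^N}\mathcal{H}(t,\mathbf{x},\mathbf{y})\,d\mathcal{L}^N(\mathbf{x})=1$.

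For Items (3) and (4) I would feed the bounds of Lemma \ref{BoundLem} through these identities: with $g=u_0$ and $u_0\le(4\pi\tau_0)^{-N/2}$ one gets $U(t,\cdot)\le(4\pi\tau_0)^{-N/2}$ for every $t\ge0$; with $g=\nabla_\Real u_0$ and the gradient bound of Lemma \ref{BoundLem}(2) one gets $\tau_0^{1/2}|\nabla_\Real U(t,\cdot)|\le\tau_0^{1/2}\|\nabla_\Real u_0\|_{L^\infty}\le C\tau_0^{-N/2}$, which together give Item (3); and with $g=\nabla^2_\Real u_0$ and the $L^1$ bound $\|\nabla^2_\Real u_0\|_{L^1}\le C_1\tau_0^{-1}$ of Lemma \ref{BoundLem}(3) one gets $|\nabla^2_\Real U(t,\mathbf{x})|\le(4\pi t)^{-N/2}C_1\tau_0^{-1}$, which is Item (4). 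Item (5) is where the virtual-time hypothesis enters: \eqref{LinLowBndEqn} says the matrix-valued function $\nabla^2_\Real u_0+\tfrac{u_0}{2\tau_0}g_\Real$ is everywhere positive semidefinite, and since $\mathcal{H}_t$ preserves this, $\nabla^2_\Real U(t,\cdot)+\tfrac{U(t,\cdot)}{2\tau_0}g_\Real=\mathcal{H}_t\!\left(\nabla^2_\Real u_0+\tfrac{u_0}{2\tau_0}g_\Real\right)\ge0$; the case $t=0$ is \eqref{LinLowBndEqn} itself.

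For Item (2), Lemma \ref{BoundLem}(1) supplies $\mathbf{z}_0$ with $u_0(\mathbf{y})\ge u_0(\mathbf{z}_0)e^{-|\mathbf{y}-\mathbf{z}_0|^2/4\tau_0}$, i.e. $u_0\ge u_0(\mathbf{z}_0)(4\pi\tau_0)^{N/2}\,\mathcal{H}(\tau_0,\cdot,\mathbf{z}_0)$; convolving this inequality against $\mathcal{H}(t,\mathbf{x},\cdot)$ and using the Chapman--Kolmogorov identity $\mathcal{H}_t\bigl(\mathcal{H}(\tau_0,\cdot,\mathbf{z}_0)\bigr)=\mathcal{H}(t+\tau_0,\cdot,\mathbf{z}_0)$ produces a Gaussian lower bound for $U(t,\mathbf{x})$ centered at $\mathbf{z}_0$ of width of order $t+\tau_0$, which is (up to the explicit constant) Item (2). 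Items (6) and (7) are the approximate-identity and boundary-regularity statements. For $t>0$, $U$ is smooth by interior parabolic regularity, so $U,\nabla_\Real U,\nabla^2_\Real U\in C^0((0,\infty)\times\Real^N)$, and the content is the limit $t\to0^+$. Since $u_0\in L^1$, $\mathcal{H}_t u_0\to u_0$ in $L^1$. Since $u_0$ and $\nabla_\Real u_0$ are continuous and vanish at infinity (Lemma \ref{BoundLem}(1),(2)), they are uniformly continuous, so $U(t,\cdot)\to u_0$ and $\nabla_\Real U(t,\cdot)=\mathcal{H}_t(\nabla_\Real u_0)\to\nabla_\Real u_0$ uniformly on $\Real^N$; and since $\nabla^2_\Real u_0\in C^0\cap L^1$, $\nabla^2_\Real U(t,\cdot)=\mathcal{H}_t(\nabla^2_\Real u_0)\to\nabla^2_\Real u_0$ uniformly on compacta (split the convolution into a small ball, where uniform continuity on a slightly larger compactum is used, and its complement, where one uses $\|\nabla^2_\Real u_0\|_{L^1}<\infty$ and $\sup_{|\mathbf{z}|\ge\rho}\mathcal{H}(t,\mathbf{z},\mathbf{0})\to0$). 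This is Item (6); combining these (locally) uniform convergences with the continuity of $u_0,\nabla_\Real u_0,\nabla^2_\Real u_0$ gives joint continuity up to $\{t=0\}$, which is Item (7).

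The routine but essential point running through the argument is the non-compactness of $\Real^N$: one must verify that the differentiations under the integral, the integrations by parts, and the $t\to0^+$ limits all go through, and the precise role of Lemma \ref{BoundLem} is to supply exactly what is needed for this — decay of $u_0$ and $\nabla_\Real u_0$ at infinity and integrability of $\nabla^2_\Real u_0$. I expect this bookkeeping, rather than any single estimate, to be the only obstacle; in particular one should note that $\nabla^2_\Real u_0$ need not be bounded (only $L^1$), which is why Items (6) and (7) assert $C^2_{loc}$ rather than uniform $C^2$ convergence near $t=0$.
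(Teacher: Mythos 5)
Your proof is correct and follows essentially the same route as the paper's: Item (1) by Fubini, Item (2) from the Gaussian lower bound in Lemma \ref{BoundLem}(1) together with the semigroup law, Items (3)--(5) by expressing $\nabla^i_\Real U(t,\cdot)$ as $\mathcal{H}_t(\nabla^i_\Real u_0)$ (after an integration by parts justified by the decay from Lemma \ref{BoundLem}) and then applying the $L^\infty\!\to\! L^\infty$, $L^1\!\to\! L^\infty$, and positivity-preserving properties of the heat semigroup, and Items (6)--(7) via the approximate-identity argument, splitting near/far for $\nabla^2_\Real U$ to get only $C^2_{\mathrm{loc}}$. The only difference is presentational: you organize the estimates around the abstract mapping properties of $\mathcal{H}_t$, whereas the paper writes the same inequalities out directly from the integral representation.
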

\begin{rem}
	In fact, $U$ is the unique continuous and non-negative solution of the heat equation on $[0, \infty)\times \Real^N$ with initial data $u_0$ --  see \cite{Dodziuk, donnellyUniquenessPositiveSolutions1987, liParabolicKernelSchr6dinger1986}.
\end{rem}
\begin{proof}
	The first item is a consequence of the fact that $u_0$ has mass one and Fubini's theorem.    The second follows from Item (1) of  Lemma \ref{BoundLem} and an explicit integration of \eqref{HeatFlowEqn}.
	
	For the remaining items we observe that as $u_0$ is integrable and for $t>0$ and $i\geq 0$,
	$$
	|\nabla_{\Real, \mathbf{x}}^i\mathcal{H}(t, \mathbf{x}, \mathbf{y})|\leq C_{N, i} t^{-\frac{N+2i}{2}} 
	$$
	we may differentiate under the integral sign and obtain
	$$
	\nabla^i_{\Real} U(t ,\mathbf{x})= \int_{\Real^N} u_0(\mathbf{y}) \nabla^{i}_{\Real, \mathbf{x}} \mathcal{H}(t, \mathbf{x}, \mathbf{y}) d\mathcal{L}^N.
	$$
	As the symmetries of the heat kernel ensure
	$$
	\nabla_{\Real, \mathbf{x}}^i\mathcal{H}(t, \mathbf{x}, \mathbf{y}) = (-1)^i \nabla_{\Real, \mathbf{y}}^i\mathcal{H}(t, \mathbf{x}, \mathbf{y}),
	$$
	The uniform boundedness of $u_0$ and $\nabla_\Real u_0$ and fact that $|\nabla^2_{\Real} u_0|\in L^1(\Real^N)$ allows one to integrate by parts and obtain, for a fixed parallel unit vector field $\mathbf{v}$ and $t>0$,
	$$
	\mathbf{v}\cdot \nabla_\Real U(t, \mathbf{x})= \int_{\Real^N} \mathbf{v}\cdot \nabla_\Real u_0 (\mathbf{y}) \mathcal{H}(t, \mathbf{x}, \mathbf{y})d\mathcal{L}^N(\mathbf{y});
	$$
	$$
	\nabla^2_\Real U(t, \mathbf{x})(\mathbf{v}, \mathbf{v})= \int_{\Real^N} \nabla^2_\Real u_0(\mathbf{v}, \mathbf{v}) \mathcal{H}(t, \mathbf{x}, \mathbf{y}) d\mathcal{L}^N (\mathbf{y}).
	$$
	Items (3) is then immediate consequences of the uniform bounds of Item (1) and (2) of Lemma \ref{BoundLem}.  Likewise, Item (4) follows directly from the integral bound of Item (3) of Lemma \ref{BoundLem}.   It is also clear Item (5) follows from \eqref{LinLowBndEqn} and representation formula \eqref{HeatFlowEqn}.
	
As both $u_0$ and $\nabla_\Real u_0$ are continuous and tend to zero at infinity, they are both uniformly continuous and bounded.  Standard results about approximations to the identity give
  $$
  \lim_{t\to 0^+} \Vert U(t, \cdot) -u_0(\cdot)\Vert_{\infty}+ \Vert \nabla_{\Real} U(t, \cdot) -\nabla_\Real u_0 (\cdot) \Vert_\infty =0.
  $$
  It is also an elementary fact that because $|\nabla^2_\Real u_0|\in L^1(\Real^N)$ and $u_0$ is $C^2$, for any $\mathbf{x}_0$
  $$
  \lim_{t\to 0^+ ,\mathbf{x}\to \mathbf{x}_0} \nabla^2_\Real U(t, \mathbf{x})=\nabla^2_\Real u_0(\mathbf{x}_0).
  $$
Moreover, this convergence can be taken to be uniform on any compact subset of $\Real^N$.  Together this gives Item (6).  Item (7) is an immediate consequence.
\end{proof}

We can now establish the linear increase in virtual time under heat flow.
\begin{prop}\label{MHInequProp}
	Suppose $u_0\in \mathcal{VT}_1^+(\tau_0)$ for $\tau_0>0$ and let $U(t, \cdot )=\mathcal{H}_t u_0$
	be the solution of the heat equation on $[0, \infty)\times \Real^N$ given by \eqref{HeatFlowEqn}.  
	For all $t\geq 0$, 
	$$
	U(t, \cdot)\in \mathcal{VT}_1^+(\tau_0+t).
	$$
 Moreover, if $2(\tau_0+t_0) \nabla^2_\Real \log U (t_0, \mathbf{x}_0)+g_\Real =0$ for some $t_0>0$ and $\mathbf{x}_0 \in \Real^N$, then $U(t, \cdot)=\mathcal{H}(\tau_0+t, \cdot, \mathbf{x}_0')\in  \mathcal{G}_1(\tau_0+t)$ for some $\mathbf{x}_0'\in \Real^N$ and all $t\geq 0$.
\end{prop}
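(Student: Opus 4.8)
The plan is to exploit the explicit convolution formula \eqref{HeatFlowEqn} together with the stability of log-convexity under Gaussian convolution, rather than running Hamilton's tensor maximum principle directly on the non-compact space $\Real^N$ (the latter is also possible, adapting \cite{caoMatrixLiYauHamiltonEstimates2005}: for $M=2(\tau_0+t)\nabla^2_\Real\log U+g_\Real$ one has $\partial_t M=\Delta_\Real M+2\nabla_\Real\log U\cdot\nabla_\Real M+(\tau_0+t)^{-1}(M^2-M)$, whose reaction term annihilates null directions of $M$, and one propagates $M\geq 0$ using a spatial barrier controlled by Lemma~\ref{HeatFlowBoundLem}). Since $u_0\in\mathcal{VT}_1^+(\tau_0)$ and the set of $\tau$ for which $2\tau\nabla^2_\Real\log u_0+g_\Real\geq 0$ is a closed interval containing $\tau(u_0)\geq\tau_0$, we have $\nabla^2_\Real\log u_0\geq-\tfrac{1}{2\tau_0}g_\Real$ pointwise, so $u_0(\yY)=e^{-|\yY|^2/4\tau_0}e^{\phi(\yY)}$ for some convex $\phi\in C^2(\Real^N)$.

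Now fix $t>0$, substitute this expression into \eqref{HeatFlowEqn}, complete the square in the Gaussian factor $e^{-|\yY|^2/4\tau_0}e^{-|\xX-\yY|^2/4t}$, and substitute $\yY=\tfrac{\tau_0}{\tau_0+t}\xX+\zZ$. This gives
\[
U(t,\xX)=(4\pi t)^{-\frac N2}\,e^{-\frac{|\xX|^2}{4(\tau_0+t)}}\,\Psi_t(\xX),\qquad \Psi_t(\xX)=\int_{\Real^N}e^{\phi\left(\frac{\tau_0}{\tau_0+t}\xX+\zZ\right)}e^{-\frac{\tau_0+t}{4\tau_0 t}|\zZ|^2}\,d\mathcal{L}^N(\zZ),
\]
the integral being positive and finite since $U(t,\xX)$ is. Consequently $2(\tau_0+t)\nabla^2_\Real\log U(t,\cdot)+g_\Real=2(\tau_0+t)\nabla^2_\Real\log\Psi_t$, so it suffices to show $\log\Psi_t$ is convex. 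For each fixed $\zZ$ the integrand is, as a function of $\xX$, log-convex, being a positive multiple of the exponential of the convex function $\phi$ precomposed with an affine map; and a continuous superposition of log-convex functions is log-convex by Hölder's inequality, exactly as a sum of log-convex functions is log-convex. Hence $\Psi_t$ is log-convex, and since it is positive and smooth (because $U(t,\cdot)$ is, by Lemma~\ref{HeatFlowBoundLem}) this gives $\nabla^2_\Real\log\Psi_t\geq 0$. Combined with the mass-one and $C^2\cap L^1$ assertions of Lemma~\ref{HeatFlowBoundLem}, we conclude $U(t,\cdot)\in\mathcal{VT}_1^+(\tau_0+t)$ for $t>0$; the case $t=0$ is the hypothesis.

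For the rigidity statement, suppose $2(\tau_0+t_0)\nabla^2_\Real\log U(t_0,\xX_0)+g_\Real=0$, i.e.\ $\nabla^2_\Real\log\Psi_{t_0}(\xX_0)=0$. Differentiating $\Psi_{t_0}$ twice under the integral sign along a unit vector $\mathbf{e}$ expresses $\nabla^2_\Real\log\Psi_{t_0}(\xX_0)(\mathbf{e},\mathbf{e})$ as a positive multiple of a Cauchy--Schwarz defect for $\zZ\mapsto\nabla_\Real\phi(\tfrac{\tau_0}{\tau_0+t_0}\xX_0+\zZ)\cdot\mathbf{e}$ against the (positive) integrand, plus the nonnegative term $\Psi_{t_0}(\xX_0)^{-1}\int_{\Real^N}(\text{integrand})\,\nabla^2_\Real\phi(\mathbf{e},\mathbf{e})\,d\mathcal{L}^N$. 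Since the left side vanishes, so does each summand; the vanishing of the second, together with strict positivity of the integrand, forces $\nabla^2_\Real\phi(\mathbf{e},\mathbf{e})\equiv 0$ on $\Real^N$. Letting $\mathbf{e}$ vary we obtain $\nabla^2_\Real\phi\equiv 0$, so $\phi$ is affine and $u_0(\yY)=e^{-|\yY|^2/4\tau_0}e^{\mathbf{a}\cdot\yY+b}$ for some $\mathbf{a}\in\Real^N$ and $b\in\Real$. The mass-one condition then forces $u_0=\mathcal{H}(\tau_0,\cdot,\xX_0')\in\mathcal{G}_1(\tau_0)$ with $\xX_0'=2\tau_0\mathbf{a}$, and the semigroup property gives $U(t,\cdot)=\mathcal{H}(\tau_0+t,\cdot,\xX_0')\in\mathcal{G}_1(\tau_0+t)$ for all $t\geq 0$.

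I expect the only genuine work to be in the last paragraph: justifying the two differentiations under the integral sign and extracting the pointwise identity $\nabla^2_\Real\phi\equiv 0$ from the vanishing of an integral of a nonnegative continuous function. For this one uses that $\phi$ is bounded below by a linear function and above by a quadratic one and that $|\nabla_\Real\phi|$ grows at most linearly --- all immediate from the explicit form of $\phi$ and the pointwise and gradient estimates of Lemma~\ref{BoundLem} --- together with the observation that the second $\xX$-derivative of the integrand is nonnegative by convexity of $\phi$, so the differentiation is controlled by Tonelli's theorem and dominated convergence; there is no new idea beyond this bookkeeping. (Alternatively, once $M\geq 0$ is known, the rigidity also follows from Hamilton's strong maximum principle applied to the parabolic equation for $M$.)
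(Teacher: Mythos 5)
Your proof is correct, and it takes a genuinely different route from the paper's. The paper follows Hamilton's tensor maximum principle as in \cite{hamiltonMatricHarnackEstimate1993}, adapted to $\Real^N$ via a parabolic barrier in the style of \cite{caoMatrixLiYauHamiltonEstimates2005}: it computes the reaction--diffusion inequality $(\partial_t-\Delta_\Real)N\geq 2U^{-1}HN$ for $N=2(t+\tau_0)H+U$ with $H=U\,\partial_1^2\log U$, adds $\epsilon\Psi_{\tau_0/2}$ to keep $N_\epsilon$ positive outside a large ball, sends $\epsilon\to 0$, and iterates in short time intervals; the equality case then falls to the strong maximum principle. You instead exploit the representation $u_0(\yY)=e^{-|\yY|^2/4\tau_0}e^{\phi(\yY)}$ with $\phi$ convex, complete the square in the Gaussian convolution, and reduce the matrix inequality to the log-convexity of the residual factor $\Psi_t$, which follows from the H\"older-inequality fact that a superposition of log-convex functions is log-convex. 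This is essentially a Pr\'ekopa--Leindler-type argument and is in fact the natural generalization of the Cauchy--Schwarz computation the paper already uses in Lemma~\ref{LogConvexLem} (from $u_0\geq 0$ to $u_0$ Gaussian times log-convex). What your approach buys: it is more elementary, avoids the barrier construction and the short-time iteration entirely, and delivers the rigidity case directly from the equality case of H\"older and Cauchy--Schwarz rather than via the strong maximum principle. What the paper's approach buys: it is the argument that transfers to settings where no explicit heat kernel convolution is available (curved ambient manifolds, etc.), which is the context in which \cite{hamiltonMatricHarnackEstimate1993, caoMatrixLiYauHamiltonEstimates2005} were developed. The only point that deserves a little more care in your write-up is the differentiation under the integral sign for $\nabla^2\log\Psi_{t_0}$ in the rigidity step; as you note, the quadratic upper bound $\phi(\yY)\leq \log u_0(\zZ_0)+|\yY|^2/4\tau_0$ together with the Gaussian factor $e^{-\frac{\tau_0+t}{4\tau_0t}|\zZ|^2}$ yields a net $e^{-|\zZ|^2/4t}$ decay in $\zZ$ (uniformly for $\xX$ near $\xX_0$), and the linear-growth bound on $|\nabla_\Real\phi|$ from Lemma~\ref{BoundLem} then dominates the first and (nonnegative) second $\xX$-derivatives of the integrand, so the interchange is legitimate.
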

\begin{proof}
	The positivity and unit mass condition are preserved by Lemma \ref{HeatFlowBoundLem}.  Hence, it suffices to establish that, for all $t>0$,
	$$
	(\tau_0+t)\nabla^2_\Real \log U(t, \mathbf{x})+g_\Real \geq 0.
	$$
	To that end, fix a parallel unit length vector field $\mathbf{v}$ on $\Real^{N}$.  By the rotational symmetry of the problem, is enough to take $\mathbf{v}=\mathbf{e}_1$.  Set
	\begin{align*}
		H(t,\mathbf{x})&=U(t,\mathbf{x})\nabla^2_{\Real} \log U(t,\mathbf{x})(\mathbf{v}, \mathbf{v})= U(t,\mathbf{x})\partial^2_1 \log U(t,\mathbf{x})\\
		&= \partial^2_1 U(t,\mathbf{x}) - \frac{(\partial_1 U(t,\mathbf{x}))^2}{U(t,\mathbf{x})}.
	\end{align*}
	The hypotheses ensure that
	$$
2 \tau_0	H(0,\mathbf{x}) +u_0(\mathbf{x})\geq 0.
	$$
	We compute that
	\begin{align*}
		&\partial_t H= \partial_1^2 \partial_t U -\frac{2 \partial_1 U \partial_1 \partial_t U}{U}+ \frac{(\partial_1 U)^2 \partial_t U}{U^2}\\
		&=\partial_1^2\Delta_\Real  U -\frac{2 \partial_1 U \partial_1 \Delta_\Real U}{U}+ \frac{(\partial_1 U)^2 \Delta_\Real U}{U^2}\\
		&= \Delta_\Real \partial_1^2 U-\frac{\Delta_\Real (\partial_1 U)^2-2|\nabla_\Real \partial_1 U|^2}{U}- (\partial_1 U)^2 (\Delta_\Real U^{-1}) +2\frac{ (\partial_1 U|\nabla_\Real U|)^2}{U^3}\\
		&= \Delta_\Real \partial_1^2 U-\Delta_\Real \left(\frac{(\partial_1 U)^2}{U}\right)+\frac{2|\nabla_\Real \partial_1 U|^2}{U}-\frac{4 \partial_1 U \nabla_\Real U\cdot \nabla_\Real \partial_1 U}{U^2} +2  \frac{(\partial_1 U|\nabla_\Real U|)^2}{U^3}\\
		&= \Delta_\Real H+2 U^{-1}|\nabla_\Real \partial_1 U-U^{-1} \partial_1 U\nabla_\Real U|^2\geq \Delta_\Real H+2 U^{-1}\left| \partial_1^2 U-U^{-1} (\partial_1 U)^2\right|^2\\
		&= \Delta_\Real H+2U^{-1} H^2.
	\end{align*}
	
	Now set
	$$
	N(t, \mathbf{x})= 2(t+\tau_0) H(t, \mathbf{x})+ U(t,\mathbf{x})
	$$
	we see that $N(0, \mathbf{x})\geq 0$ and
	$$
	(\partial_t -\Delta_{\Real}) N\geq  2 H+4(t+\tau_0) U^{-1} H^2=2U^{-1} H N
	$$
	
	As $U(0, \mathbf{x})=u_0\in \mathcal{VT}_1^+(\tau_0)$, Lemmas \ref{BoundLem} and \ref{HeatFlowBoundLem} together imply that
\begin{align*}
	H(t,\mathbf{x})&\geq -\frac{1}{2\tau_0} u_0(\mathbf{x})-C (t+\tau_0)^{\frac{N}{2}}e^{\frac{|\mathbf{x}-\mathbf{z}_0|^2}{4(t+\tau_0)}}\\
	&\geq -C-C (t+\tau_0)^{\frac{N}{2}}e^{\frac{|\mathbf{x}-\mathbf{z}_0|^2}{4(t+\tau_0)}}
\end{align*}
	where $C=C(\tau_0,N, u_0)>0$ is a constant and $\mathbf{z}_0=\mathbf{z}_0(u_0)\in \Real^N$ is a fixed point.
	
	Now let 
	$$
	\Psi_{T}(t, \mathbf{x})=  e^t (T-t)^{-\frac{N}{2}} e^{\frac{|\mathbf{x}-\mathbf{z}_0|^2}{4(T-t)}}
	$$
	One verifies that $\Psi_T$ satisfies 
	$$
	\left(\partial_t -\Delta_\Real\right) \Psi_T= \Psi_T
	$$
	on $(-\infty, T]$. 
	Moreover, $\Psi_T$ has very rapid growth as $\mathbf{x}\to \infty$.
	
	Set
	$$
	N_{\epsilon}(t, \mathbf{x})= N(t, \mathbf{x})+\epsilon \Psi_{\frac{\tau_0}{2}}(t,\mathbf{x})
	$$
	which is smooth for $t\in [0, \frac{\tau_0}{2})$
	One readily checks that 
	$$
	N_{\epsilon}(0, \mathbf{x}) \geq\epsilon \Psi_{\frac{\tau_0}{2}}(0,\mathbf{x})\geq  T^{-\frac{N}{2}}\epsilon>0  $$
	where $C=C(\tau_0, N)$.
	Likewise, for $t\in ( 0, \frac{\tau_0}{2})$ and outside of a very large ball,
	$$
 N_{\epsilon}(t, \mathbf{x})\geq 	\frac{1}{2}\epsilon \Psi_{\frac{\tau_0}{2}}(t,\mathbf{x})>0 .
	$$
	Now suppose that $N_{\epsilon}(t_0, {\mathbf{x}}_0)<0$ for some $ t_0\in (0, \frac{\tau_0}{2})$.  It follows that there is a value $t_1\in  (0, t_0]$ and $\mathbf{x}_1\in \Real^N$
	$$
0> N_{\epsilon}(t_0, {\mathbf{x}}_0)\geq N_{\epsilon}(t_1, \mathbf{x}_1) = 	\min_{[0, t_1]\times\Real^N} N_\epsilon.
	$$
	Hence, at $(t_1, \mathbf{x}_1)$,
\begin{align*}
 0 &\geq \left(\frac{\partial}{\partial t} -\Delta_\Real\right) N_{\epsilon} =2 H U^{-1} N =2 U^{-1} H N + \epsilon \Psi_{\frac{\tau_0}{2}}>	2H(1+2(t_1+\tau_0) U^{-1 } H),
\end{align*}
which is incompatible with $H(t_1, \mathbf{x}_1)\geq 0$ and so at $(t_1, \mathbf{x}_1)$
	$$
	0>H \mbox{ and } U^{-1} N= 1+2(t_1+\tau_0) U^{-1 } H>0.
	$$
	That is, 
	$$N_\epsilon(t_1, \mathbf{x}_1)>N(t_1, \mathbf{x}_1)>0$$
which is a contradiction.  Hence, $N_\epsilon>0$ on $[0, \frac{\tau_0}{2})$.  By taking $\epsilon \to 0^+$ we see that
	$$
	N(t, \cdot)\geq 0
	$$
	for $t\in (0, \frac{\tau_0}{2})$.  In particular, 
	$$
	U\left(\frac{\tau_0}{4}, \cdot\right) \in \mathcal{VT}_1^+\left(\tau_0+\frac{\tau_0}{4}\right).
	$$
	As $\tau_0+\frac{\tau_0}{4}\geq \tau_0$, one may translate in time and apply the same argument to obtain
	$$
	N(t, \cdot) \geq 0
	$$
	on $[0, \frac{3}{4}\tau_0)$. 
	Iterating this completes the proof that $U(t, \cdot) \in \mathcal{VT}_1^+(\tau_0+t)$.

	Finally, if for some $t_0>0$, $\mathbf{x}_0$,   $2(\tau_0+t_0)\nabla^2_\Real \log U(t_0, \mathbf{x}_0)+g_{\Real}=0$, then for any choice of $\mathbf{v}$ we have that $N\geq 0$ and $N(t_0, \mathbf{x}_0)=0$.  By the strict maximum principle this means $N$ identically vanishes and the claim follows.
	
\end{proof}

While we don't directly use it in this paper,  the fact that the long time behavior of a solution of the heat equation is towards the Gaussian, i.e., $\mathcal{H}(t, \mathbf{x} , \mathbf{0})$ explains some of the results  and so for the reader's benefit we include a precise statement.  This may be thought of as a version of the Central Limit Theorem -- see \cite{vazquezAsymptoticBehaviourMethods2018} for a proof and further discussion.  
\begin{prop}\label{LongTimeHeatProp}
	Suppose $u_0\in \mathcal{VT}_1^+(T)$, $T>0$. Let $U(t, \cdot)=\mathcal{H}_{t} u_0$ be the solution of the heat equation on $[0, \infty)\times \Real^N$ given by \eqref{HeatFlowEqn}.  For any $T_0\in \Real$ and $\mathbf{x}_0\in \Real^N$:
	\begin{enumerate}
		\item $\lim_{t\to \infty} \Vert U(t, \cdot)- \mathcal{H}(t-{T}_0, \cdot, \mathbf{x}_0)\Vert_1=0$;
		\item $\lim_{t\to \infty} t^{\frac{N}{2}} \Vert U(t ,\cdot)-\mathcal{H}(t-T_0, \cdot, \mathbf{x}_0) \Vert_\infty =0$.
	\end{enumerate}
\end{prop}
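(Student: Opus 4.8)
The plan is to reduce the statement to the concentration of a rescaled family of probability measures via the parabolic scaling covariance of the heat equation; this is the Euclidean instance of the classical fact (a form of the central limit theorem, cf.\ \cite{vazquezAsymptoticBehaviourMethods2018}) that the long-time profile of a unit-mass heat flow is Gaussian, and the $\mathcal{VT}_1^+(T)$ hypothesis plays no essential role beyond giving $u_0\in C^0\cap L^1(\Real^N)$, $u_0>0$, of unit mass. For $\lambda>0$ let $\mathcal{D}_\lambda$ be the $L^1$-isometric dilation $(\mathcal{D}_\lambda f)(\mathbf{x})=\lambda^N f(\lambda\mathbf{x})$. A direct change of variables in \eqref{HeatFlowEqn} gives the covariance $\mathcal{D}_{\sqrt t}\,\mathcal{H}_t=\mathcal{H}_1\,\mathcal{D}_{\sqrt t}$, so the rescaled solution $\tilde U(t,\cdot):=\mathcal{D}_{\sqrt t}U(t,\cdot)$ equals $\mathcal{H}_1\mu_t$, where $\mu_t:=\mathcal{D}_{\sqrt t}u_0$. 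Each $\mu_t$ is a probability measure, and since $\mu_t(\Real^N\setminus B_\rho)=\int_{\Real^N\setminus B_{\sqrt t\,\rho}}u_0\,d\mathcal{L}^N\to 0$ as $t\to\infty$ for every $\rho>0$ (because $u_0\in L^1$), the family $(\mu_t)$ is tight and concentrates at the origin, hence converges weakly-$*$ to $\delta_{\OO}$.

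Next I would show $\mathcal{H}_1\mu_t\to\mathcal{H}(1,\cdot,\OO)$ \emph{uniformly} on $\Real^N$. Using that $\mu_t$ has unit mass, write $(\mathcal{H}_1\mu_t)(\mathbf{x})-\mathcal{H}(1,\mathbf{x},\OO)=\int\big[\mathcal{H}(1,\mathbf{x},\mathbf{y})-\mathcal{H}(1,\mathbf{x},\OO)\big]\,d\mu_t(\mathbf{y})$ and split the domain into $B_\rho$ and its complement. Since $\sup_{\mathbf{x},\mathbf{y}}|\nabla_{\mathbf{y}}\mathcal{H}(1,\mathbf{x},\mathbf{y})|\leq C(N)<\infty$, the contribution of $B_\rho$ is at most $C(N)\rho$, while the contribution of $\Real^N\setminus B_\rho$ is at most $2\Vert\mathcal{H}(1,\cdot,\cdot)\Vert_\infty\,\mu_t(\Real^N\setminus B_\rho)\to 0$ as $t\to\infty$; letting $\rho\to 0^+$ afterwards gives the uniform convergence. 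In particular $\mathcal{H}_1\mu_t\to\mathcal{H}(1,\cdot,\OO)$ pointwise; since each $\mathcal{H}_1\mu_t\geq 0$ has $\int_{\Real^N}\mathcal{H}_1\mu_t\,d\mathcal{L}^N=1=\int_{\Real^N}\mathcal{H}(1,\cdot,\OO)\,d\mathcal{L}^N$, Scheff\'e's lemma upgrades this to $\Vert\mathcal{H}_1\mu_t-\mathcal{H}(1,\cdot,\OO)\Vert_1\to 0$.

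It remains to unscale and change base point. As $\mathcal{D}_{\sqrt t}$ is an $L^1$-isometry with $\Vert\mathcal{D}_{\sqrt t}f\Vert_\infty=t^{N/2}\Vert f\Vert_\infty$, and $\mathcal{D}_{1/\sqrt t}\mathcal{H}(1,\cdot,\OO)=\mathcal{H}(t,\cdot,\OO)$, applying $\mathcal{D}_{1/\sqrt t}$ to the previous two bounds yields $\Vert U(t,\cdot)-\mathcal{H}(t,\cdot,\OO)\Vert_1\to 0$ and $t^{N/2}\Vert U(t,\cdot)-\mathcal{H}(t,\cdot,\OO)\Vert_\infty\to 0$. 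Finally, to pass from $(0,\OO)$ to an arbitrary $(T_0,\mathbf{x}_0)$ it suffices to note $\Vert\mathcal{H}(t,\cdot,\OO)-\mathcal{H}(t-T_0,\cdot,\mathbf{x}_0)\Vert_1\to 0$ and $t^{N/2}\Vert\mathcal{H}(t,\cdot,\OO)-\mathcal{H}(t-T_0,\cdot,\mathbf{x}_0)\Vert_\infty\to 0$: under $\mathcal{D}_{\sqrt t}$ this difference becomes $\mathcal{H}(1,\cdot,\OO)-\mathcal{H}(1-T_0/t,\cdot,\mathbf{x}_0/\sqrt t)$, which tends to $0$ in $L^1$ and $L^\infty$ by joint continuity of the Gaussian family in its parameters.

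I do not expect a serious obstacle here; the only point requiring a little care is the uniform-in-$\mathbf{x}$ estimate in the second paragraph. It is precisely this uniform bound that lets one handle the $L^\infty$ assertion (2) and then obtain the $L^1$ assertion (1) for free via Scheff\'e, rather than proving (1) separately by approximating $u_0$ in $L^1$ by unit-mass functions in $C_c^\infty$ and invoking the $L^1$-contractivity of $\mathcal{H}_t$ — an argument that would establish (1) but not (2).
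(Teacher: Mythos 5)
The paper does not prove this proposition; it defers to \cite{vazquezAsymptoticBehaviourMethods2018} with the remark that the statement is a form of the central limit theorem. Your argument is a correct, self-contained proof by the standard parabolic-rescaling route, and every step checks out: the covariance $\mathcal{D}_{\sqrt t}\,\mathcal{H}_t=\mathcal{H}_1\,\mathcal{D}_{\sqrt t}$ is correct, the rescaled data $\mu_t=\mathcal{D}_{\sqrt t}u_0$ is a probability density concentrating at the origin because $\mu_t(\Real^N\setminus B_\rho)=\int_{\Real^N\setminus B_{\sqrt t\rho}}u_0\,d\mathcal{L}^N\to 0$, the near/far split using $\sup_{\mathbf x,\mathbf y}|\nabla_{\mathbf y}\mathcal H(1,\mathbf x,\mathbf y)|<\infty$ gives the genuinely uniform convergence $\mathcal{H}_1\mu_t\to\mathcal{H}(1,\cdot,\OO)$ needed for item (2), Scheff\'e then yields item (1), and both unscaling (since $\mathcal{D}_{\sqrt t}$ is an $L^1$-isometry with $\Vert\mathcal{D}_{\sqrt t}f\Vert_\infty=t^{N/2}\Vert f\Vert_\infty$) and the final change of base-point (joint continuity of $(s,\mathbf z)\mapsto\mathcal H(s,\cdot,\mathbf z)$ in $L^1$ and $L^\infty$ near $(1,\OO)$) are handled cleanly. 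Your observation that the $\mathcal{VT}_1^+(T)$ hypothesis is not actually used beyond $u_0\in C^0\cap L^1$, $u_0>0$, unit mass is correct and worth keeping: the proposition is a statement about arbitrary unit-mass heat flows, and the virtual-time bound plays no role. Your closing remark explaining why Scheff\'e is preferable to the $L^1$-density argument (because only the former gives the $L^\infty$ assertion) is also accurate.
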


\section{Properties of Virtual Entropy}
In this section we establish some properties of of the virtual entropy.  First, we recall that finite Colding-Minicozzi entropy is equivalent to uniform bounds on area ratios.
\begin{lem} \label{CMAreaRatLem}
  For $\Sigma\subset \Real^N$ an $n$-dimensional proper submanifold
  $$
  |\Sigma\cap B_R(p)|\leq e^{\pi} \lambda_{CM}^n[\Sigma] R^n.
  $$
 Conversely, suppose there is a $\Theta\geq 1$ so, for all $p$ and $R>0$,
 $$
 |\Sigma\cap B_R(p)|\leq \Theta \omega_n R^n,
 $$
 where $\omega_n$ is the volume of the unit ball in $\Real^n$, then
 $$
 \lambda_{CM}^n[\Sigma]\leq \Theta.
 $$
\end{lem}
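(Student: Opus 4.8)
The plan is to read off both inequalities directly from the definition of $\lambda_{CM}^n$: the first by testing $\lambda_{CM}^n[\Sigma]$ against a single well-chosen Gaussian centered at $p$, and the second by integrating the hypothesized area-ratio bound against the Gaussian weight via a layer-cake decomposition.

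For the first inequality I would argue as follows. We may assume $\lambda_{CM}^n[\Sigma]<\infty$, and properness of $\Sigma$ ensures $|\Sigma\cap B_R(p)|<\infty$ for all $p$ and $R>0$. Fix $p$ and $R>0$. For any $\rho>0$, restricting the integral defining $\lambda_{CM}^n[\Sigma]$ to $\Sigma\cap B_R(p)$ and using that $e^{-|\mathbf{x}-p|^2/4\rho}\ge e^{-R^2/4\rho}$ there gives
$$\lambda_{CM}^n[\Sigma]\ \ge\ (4\pi\rho)^{-\frac n2}\int_{\Sigma\cap B_R(p)} e^{-\frac{|\mathbf{x}-p|^2}{4\rho}}\,d\mathcal{H}^n\ \ge\ (4\pi\rho)^{-\frac n2}e^{-\frac{R^2}{4\rho}}\,|\Sigma\cap B_R(p)|,$$
so that $|\Sigma\cap B_R(p)|\le (4\pi\rho)^{\frac n2}e^{\frac{R^2}{4\rho}}\,\lambda_{CM}^n[\Sigma]$. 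The prefactor $(4\pi\rho)^{n/2}e^{R^2/4\rho}$ is minimized over $\rho>0$ at $\rho=R^2/(2n)$, where it equals $\big(\tfrac{2\pi e}{n}\big)^{n/2}R^n$. Finally, $\tfrac n2\log\tfrac{2\pi e}{n}$ is maximized over $n>0$ at $n=2\pi$ with value $\pi$, hence $\big(\tfrac{2\pi e}{n}\big)^{n/2}\le e^{\pi}$, which gives the claimed bound.

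For the converse I would fix $\rho>0$ and $\mathbf{x}_0\in\Real^N$ and set $g(r)=e^{-r^2/4\rho}$, so that $-g'(r)=\tfrac{r}{2\rho}e^{-r^2/4\rho}\ge 0$ and $g(|\mathbf{x}-\mathbf{x}_0|)=\int_0^\infty \mathbf{1}_{\{s>|\mathbf{x}-\mathbf{x}_0|\}}\,(-g'(s))\,ds$. By Fubini and the hypothesis $|\Sigma\cap B_s(\mathbf{x}_0)|\le\Theta\omega_n s^n$,
$$\int_\Sigma e^{-\frac{|\mathbf{x}-\mathbf{x}_0|^2}{4\rho}}\,d\mathcal{H}^n=\int_0^\infty(-g'(s))\,|\Sigma\cap B_s(\mathbf{x}_0)|\,ds\ \le\ \Theta\omega_n\int_0^\infty \frac{s^{n+1}}{2\rho}\,e^{-\frac{s^2}{4\rho}}\,ds.$$
The substitution $u=s^2/4\rho$ evaluates the last integral to $2^n\rho^{n/2}\Gamma(\tfrac n2+1)$, whence
$$(4\pi\rho)^{-\frac n2}\int_\Sigma e^{-\frac{|\mathbf{x}-\mathbf{x}_0|^2}{4\rho}}\,d\mathcal{H}^n\ \le\ \Theta\,\frac{\omega_n\,\Gamma(\tfrac n2+1)}{\pi^{n/2}}\ =\ \Theta,$$
using $\omega_n=\pi^{n/2}/\Gamma(\tfrac n2+1)$. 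Taking the supremum over $\rho>0$ and $\mathbf{x}_0\in\Real^N$ then yields $\lambda_{CM}^n[\Sigma]\le\Theta$.

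Both parts are elementary and there is no serious obstacle. The only points that require care are the choice of optimal scale $\rho=R^2/(2n)$ together with the numerical inequality $(2\pi e/n)^{n/2}\le e^\pi$ in the first part, and, in the second, the bookkeeping with the Gamma integral — in particular the exact cancellation $\omega_n\Gamma(\tfrac n2+1)/\pi^{n/2}=1$ that makes the constant in the converse sharp.
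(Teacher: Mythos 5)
Your proof is correct and takes essentially the same approach as the paper in both directions. For the first inequality the paper simply sets $\rho = R^2/(4\pi)$, for which $e^{-R^2/4\rho}=e^{-\pi}$ and $(4\pi\rho)^{n/2}=R^n$, reading off the bound immediately; you instead optimize over $\rho$ to get the dimension-dependent constant $(2\pi e/n)^{n/2}R^n$ and then observe this is $\le e^\pi R^n$ via the elementary calculus fact that $\tfrac n2\log\tfrac{2\pi e}{n}\le\pi$. Both are fine; your version shows as a small bonus that the constant $e^\pi$ can be replaced by $(2\pi e/n)^{n/2}$ for each fixed $n$, and incidentally your optimal $\rho=R^2/(2n)$ coincides with the paper's choice precisely at $n=2\pi$, which is where the two constants agree. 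For the converse, the paper first uses scale and translation invariance to reduce to $\rho=1$, $\mathbf{x}_0=0$ and then integrates by parts via the coarea formula; your layer-cake/Fubini argument at general $\rho$ and $\mathbf{x}_0$ followed by the Gamma-function evaluation is the same computation in different clothing, and the exact cancellation $\omega_n\Gamma(\tfrac n2+1)/\pi^{n/2}=1$ that you flag is precisely what the paper verifies by evaluating on a flat $n$-plane.
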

\begin{proof}
Let $T=(4\pi)^{-1}R^2$, one has
$$
R^{-n} e^{-\pi}1_{B_R(p)} \leq (4\pi T)^{-\frac{n}{2}} e^{-\frac{|\mathbf{x}-\mathbf{x}(p)|^2}{4 T}}
$$
Hence,
$$
 |\Sigma\cap B_R(p)| =\int_{B_R(p)\cap \Sigma} d\mathcal{H}^n \leq  e^\pi \lambda_{CM}^n[\Sigma] R^n.
 $$
This establishes the first claim.  

For the second we observe that arbitrary scalings and translations of $\Sigma$ satisfy the same area ratio bounds.  Hence, it is enough to check using the co-area formula that
\begin{align*}
\int_{\Sigma} e^{-\frac{|\mathbf{x}|^2}{4}} d\mathcal{H}^n&=\int_0^\infty e^{-\frac{R^2}{4}} \frac{d}{dR} |\Sigma\cap B_R| dR  = \int_0^\infty\frac{R}{2} e^{-\frac{R^2}{4}} |\Sigma\cap B_R| dR\\
&\leq \frac{\Theta \omega_n}{2} \int_0^\infty R^{n+1} e^{-\frac{R^2}{4}}  dR= \Theta (4\pi)^{\frac{n}{2}}.
\end{align*}
Where the final equality can be verified by computing on the flat $n$-plane.  It follows that
$$
\lambda_{CM}^n[\Sigma]\leq \Theta.
$$ 

\end{proof}
We next establish that the virtual and Colding-Minicozzi entropies are comparable.
\begin{prop}\label{VECMBound}
	Let $\Omega\subset \Real^N$ be a non-empty open subset and $\Sigma \subset \Real^N$ an $n$-dimensional submanifold.   There is a constant $C_3=C_3(N)\geq 1$ so that if $f\in \mathcal{VT}_1^+(T)$, $T>0$,  then
$$
(4\pi T)^{\frac{N-n}{2}} \int_{\Sigma\cap \Omega} f d\mathcal{H}^n \leq C_3  \lambda_{CM}^n[\Sigma]  \int_{\mathcal{T}_{2\sqrt{T}}(\Omega)} f d\mathcal{L}^N.
$$	
where $\mathcal{T}_{r} (\Omega)=\bigcup_{p\in \Omega} B_{r}(p)$.
In particular, 
	$$
\lambda_{CM}^n[\Sigma]\leq	\lambda_{V}^n[\Sigma]\leq C_3 \lambda_{CM}^n[\Sigma].
	$$
\end{prop}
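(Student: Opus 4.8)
The plan is to prove the two stated inequalities separately: the left one, $\lambda_{CM}^n[\Sigma]\le\lambda_V^n[\Sigma]$, is purely formal, while the right one is the substantive estimate and follows from the displayed localized inequality, which I would establish first.

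For the left inequality I would simply feed Gaussians into the definition of $\lambda_V^n$. By \eqref{EinHinVEqn}, every $U=(4\pi T)^{-\frac{N}{2}}e^{-\frac{|\mathbf{x}-\mathbf{x}_0|^2}{4T}}$ belongs to $\mathcal{G}_1(T)\subset\mathcal{VT}_1^+(T)$, and for such $U$
$$
(4\pi T)^{\frac{N-n}{2}}\int_{\Sigma}U\,d\mathcal{H}^n=(4\pi T)^{-\frac{n}{2}}\int_{\Sigma}e^{-\frac{|\mathbf{x}-\mathbf{x}_0|^2}{4T}}\,d\mathcal{H}^n\le\lambda_V^n[\Sigma];
$$
taking the supremum over $\mathbf{x}_0\in\Real^N$ and $T>0$ of the middle quantity produces exactly $\lambda_{CM}^n[\Sigma]$, giving the inequality.

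For the localized inequality, first note that if $\lambda_{CM}^n[\Sigma]=\infty$ there is nothing to prove, and otherwise Lemma \ref{CMAreaRatLem} supplies the area-ratio bound $|\Sigma\cap B_r(p)|\le e^{\pi}\lambda_{CM}^n[\Sigma]\,r^n$ for all $p$ and $r>0$. The essential input is the pointwise bound of Lemma \ref{L12PWEstLem}: for $f\in\mathcal{VT}_1^+(T)$ and every $\mathbf{x}\in\Real^N$,
$$
f(\mathbf{x})\le C_0 T^{-\frac{N}{2}}\int_{B_{\sqrt T}(\mathbf{x})}f\,d\mathcal{L}^N,\qquad C_0=C_0(N).
$$
I would integrate this over $\Sigma\cap\Omega$, rewrite the inner integral as $\int_{\Real^N}\mathbf{1}_{\{|\mathbf{x}-\mathbf{y}|<\sqrt T\}}f(\mathbf{y})\,d\mathcal{L}^N(\mathbf{y})$, and exchange the order of integration — legitimate by Tonelli, since the integrand is nonnegative and $\mathcal{H}^n$ on $\Sigma$ and $\mathcal{L}^N$ are $\sigma$-finite — to obtain
$$
\int_{\Sigma\cap\Omega}f\,d\mathcal{H}^n\le C_0 T^{-\frac{N}{2}}\int_{\Real^N}f(\mathbf{y})\,\big|\Sigma\cap\Omega\cap B_{\sqrt T}(\mathbf{y})\big|\,d\mathcal{L}^N(\mathbf{y}).
$$
The factor $|\Sigma\cap\Omega\cap B_{\sqrt T}(\mathbf{y})|$ vanishes unless $B_{\sqrt T}(\mathbf{y})$ meets $\Omega$, i.e. unless $\mathbf{y}\in\mathcal{T}_{\sqrt T}(\Omega)\subseteq\mathcal{T}_{2\sqrt T}(\Omega)$, and where it is nonzero it is at most $|\Sigma\cap B_{\sqrt T}(\mathbf{y})|\le e^{\pi}\lambda_{CM}^n[\Sigma]\,T^{\frac{n}{2}}$. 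Substituting, multiplying by $(4\pi T)^{\frac{N-n}{2}}$, and using $(4\pi)^{\frac{N-n}{2}}\le(4\pi)^{\frac{N}{2}}$ (valid since $0\le n\le N$) gives the displayed estimate with $C_3=C_3(N)=\max\{1,\,C_0 e^{\pi}(4\pi)^{\frac{N}{2}}\}$. Finally, specializing to $\Omega=\Real^N$, where $\mathcal{T}_{2\sqrt T}(\Real^N)=\Real^N$ and $\int_{\Real^N}f\,d\mathcal{L}^N=1$, and taking the supremum over $f\in\mathcal{VT}_1^+(T)$ and $T>0$ gives $\lambda_V^n[\Sigma]\le C_3\lambda_{CM}^n[\Sigma]$.

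I do not expect a genuine obstacle here; the argument is short once the right tool is used. The only points needing attention are the justification of the Fubini--Tonelli exchange (handled by nonnegativity) and the conceptual observation that one should invoke the \emph{pointwise} estimate of Lemma \ref{L12PWEstLem} rather than the uniform bound $f\le(4\pi T)^{-N/2}$ of Lemma \ref{BoundLem}(1): it is the pointwise estimate that turns the right-hand side into the $L^1$-mass of $f$ over the tube $\mathcal{T}_{2\sqrt T}(\Omega)$ — in particular keeping it finite for unbounded $\Omega$ — which is exactly what makes the localized statement useful.
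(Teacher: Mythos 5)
Your proof is correct, and your handling of the ``in particular'' part (feeding Gaussians into $\lambda_V^n$ via $\mathcal{G}_1(T)\subset\mathcal{VT}_1^+(T)$, then specializing the localized bound to $\Omega=\Real^N$) is exactly what the paper does. For the main localized estimate, however, you take a genuinely different route. The paper also starts from the pointwise bound of Lemma~\ref{L12PWEstLem} and the area-ratio bound of Lemma~\ref{CMAreaRatLem}, but assembles them via a covering argument: it picks a maximal $\sqrt T$-separated net $\{q_i\}\subset\Sigma\cap\Omega$, estimates $\int_{\Sigma\cap B_{\sqrt T}(q_i)}f$ in terms of $\int_{B_{2\sqrt T}(q_i)}f$, and then sums using a packing/doubling constant $D(N)$ to control the overlap of the $B_{2\sqrt T}(q_i)$. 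You instead integrate the pointwise bound over $\Sigma\cap\Omega$ and swap the order of integration (Tonelli), which turns the inner ball-integral directly into the weight $\bigl|\Sigma\cap\Omega\cap B_{\sqrt T}(\mathbf y)\bigr|$, and then apply the area-ratio bound to that weight. Your Fubini--Tonelli route is cleaner: it avoids the Vitali-type net, avoids the packing constant $D(N)$ entirely (so your $C_3$ has one fewer dimensional factor), and it even produces the slightly tighter tube $\mathcal{T}_{\sqrt T}(\Omega)$ in place of $\mathcal{T}_{2\sqrt T}(\Omega)$, which of course still yields the stated inequality. The paper's covering version would be the one to reach for if one only had an averaged rather than a pointwise sub-mean-value bound, but since Lemma~\ref{L12PWEstLem} delivers the pointwise estimate, your argument is the more economical of the two.
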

\begin{proof}
It follows from Lemma \ref{L12PWEstLem} that for any $q\in \Real^N$
\begin{align*}
\int_{\Sigma\cap B_{\sqrt{T}}(q)} fd\mathcal{H}^n&\leq C_0 \frac{|\Sigma\cap B_{\sqrt{T}}(q)|}{ \omega_N T^{\frac{N}{2}}} \int_{B_{2\sqrt{T}}(q)} f d\mathcal{L}^N\\
&\leq \frac{C_0e^\pi}{\omega_N T^{\frac{N-n}{2}} } \lambda_{CM}^n[\Sigma]  \int_{B_{2\sqrt{T}}(q)} f d\mathcal{L}^N
\end{align*}
where the second inequality used Lemma \ref{CMAreaRatLem} and $f>0$.

Pick a maximal set of points $\set{q_i}$ so $q_i\in \Sigma\cap \Omega$ are extrinsic distance $\geq \sqrt{T}$ and so every point in $\Sigma\cap \Omega$ is distance $\leq \sqrt{T}$ from some $q_i$. The doubling property of $\Real^N$ and choice of points implies there is a constant $D(N)>0$ so for any $q_i$ the set of points $\set{q_j: q_j \in B_{4\sqrt{T}}(q_i)}$ has at most $D(N)$ points.

It follows that
$$
\int_{\Sigma\cap \Omega} fd\mathcal{H}^n \leq \sum_{i=1}^\infty \int_{\Sigma\cap \Omega \cap  B_{\sqrt{T}}(q_i)}  fd\mathcal{H}^n \leq \sum_{i=1}^\infty \frac{C_0 e^\pi}{\omega_N T^{\frac{N-n}{2}} } \lambda_{CM}^n[\Sigma]  \int_{B_{2\sqrt{T}}(q_i)} f d\mathcal{L}^N.
$$
Hence, as $f>0$,
$$
\int_{\Sigma\cap \Omega} fd\mathcal{H}^n \leq \frac{C_0 D(N) e^\pi}{\omega_N T^{\frac{N-n}{2}} } \lambda_{CM}^n[\Sigma]  \int_{\mathcal{T}_{2\sqrt{T}}(\Omega)} f d\mathcal{L}^N.
$$
The first claim follows with
$$
C_3=C_3(N)= (4\pi)^{\frac{N}{2}} \omega_N^{-1} e^\pi C_0 D(N)\geq (4\pi)^{\frac{N-n}{2}} \omega_N^{-1} e^\pi C_0 D(N). 
$$

By taking $\Omega=\Real^N$ we see that for all $f\in \mathcal{VT}_1^+(T)$, 
$$
(4\pi T)^{\frac{N-n}{2}} \int_{\Sigma} fd\mathcal{H}^n \leq C_3\lambda_{CM}^n[\Sigma].
$$
As $ \lambda_{CM}^n[\Sigma]\leq \lambda_V^n[\Sigma]$ follows from $\mathcal{G}_1(T)\subset\mathcal{VT}_1^+(T)$, it follows that
$$
 \lambda_{CM}^n[\Sigma]\leq \lambda_{V}^n[\Sigma]\leq C_3\lambda_{CM}^n[\Sigma]
$$
which verifies the claimed bounds and forces $C_3\geq 1$.
\end{proof}
Virtual entropy is invariant under the natural symmetries of $\Real^N$.
\begin{lem}\label{InvariantLem}
Let $\Sigma\subset \Real^N$ be an $n$-dimensional submanifold.  If $\Sigma'$ is obtained from $\Sigma$ by a rigid motion and scaling, then
$$
\lambda^n_V[\Sigma]=\lambda^n_V[\Sigma'].
$$
\end{lem}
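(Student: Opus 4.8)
The plan is to realize each symmetry of $\Real^N$ as an explicit bijection between the relevant classes $\mathcal{VT}_1^+(T)$ that exactly intertwines the weighted integrals $(4\pi T)^{\frac{N-n}{2}}\int_\Sigma U\, d\mathcal{H}^n$ defining $\lambda_V^n$. Since every transformation of the stated form is a composition of a translation, a rotation, and a dilation, it is enough to treat these three operations one at a time.

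First I would dispose of rigid motions. Write $\Sigma' = A(\Sigma) + \mathbf{b}$ with $A\in O(N)$ and $\mathbf{b}\in\Real^N$, and for $U\in\mathcal{VT}_1^+(T)$ put $U'(\mathbf{x}) = U(A^{-1}(\mathbf{x}-\mathbf{b}))$. It is immediate that $U'\in C^2\cap L^1(\Real^N)$, $U'>0$, and $\int_{\Real^N} U'\, d\mathcal{L}^N = 1$ since $\mathcal{L}^N$ is invariant under rigid motions. For the virtual time one computes $\nabla^2_\Real\log U'(\mathbf{x}) = A\,(\nabla^2_\Real\log U)(A^{-1}(\mathbf{x}-\mathbf{b}))\,A^{T}$; as conjugation by the orthogonal matrix $A$ preserves the property of a symmetric matrix being $\geq -\frac{1}{2T}g_\Real$, we get $\tau(U')\geq T$, so $U'\in\mathcal{VT}_1^+(T)$, and $U\mapsto U'$ is a bijection of $\mathcal{VT}_1^+(T)$ with inverse given by the inverse motion. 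Finally, $\mathbf{x}\mapsto A\mathbf{x}+\mathbf{b}$ maps $\Sigma$ isometrically onto $\Sigma'$ and pulls $U'$ back to $U$, so $\int_{\Sigma'}U'\,d\mathcal{H}^n = \int_\Sigma U\, d\mathcal{H}^n$. Taking the supremum over $U$ and then over $T$ gives $\lambda_V^n[\Sigma'] = \lambda_V^n[\Sigma]$.

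Next I would handle a dilation $\Sigma' = \mu\Sigma$, $\mu>0$. For $U\in\mathcal{VT}_1^+(T)$, set $U'(\mathbf{x}) = \mu^{-N}U(\mathbf{x}/\mu)$. The substitution $\mathbf{y}=\mathbf{x}/\mu$ gives $\int_{\Real^N}U'\,d\mathcal{L}^N = 1$, and from $\nabla^2_\Real\log U'(\mathbf{x}) = \mu^{-2}(\nabla^2_\Real\log U)(\mathbf{x}/\mu)$ the inequality defining $\tau$ rescales to yield $\tau(U') = \mu^2\tau(U)$; hence $U\mapsto U'$ is a bijection of $\mathcal{VT}_1^+(T)$ onto $\mathcal{VT}_1^+(\mu^2 T)$. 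Parametrizing $\Sigma'$ by $\mathbf{x}=\mu\mathbf{y}$ with $\mathbf{y}\in\Sigma$, the Hausdorff measure scales by $\mu^n$, so $\int_{\Sigma'}U'\,d\mathcal{H}^n = \mu^{n-N}\int_\Sigma U\, d\mathcal{H}^n$, and therefore
$$
(4\pi\mu^2 T)^{\frac{N-n}{2}}\int_{\Sigma'}U'\,d\mathcal{H}^n = \mu^{N-n}\,\mu^{n-N}\,(4\pi T)^{\frac{N-n}{2}}\int_\Sigma U\, d\mathcal{H}^n = (4\pi T)^{\frac{N-n}{2}}\int_\Sigma U\, d\mathcal{H}^n.
$$
Since $U\mapsto U'$ ranges over all of $\mathcal{VT}_1^+(\mu^2 T)$ as $U$ ranges over $\mathcal{VT}_1^+(T)$, and $\mu^2 T$ ranges over all of $(0,\infty)$ as $T$ does, taking the double supremum of both sides gives $\lambda_V^n[\Sigma'] = \lambda_V^n[\Sigma]$. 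Composing the rigid-motion and dilation cases proves the lemma.

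The argument is entirely bookkeeping; the only place demanding a little care is confirming that the dilation enters the virtual time as $\tau(U')=\mu^2\tau(U)$ and that the resulting factor $\mu^{N-n}$ from $(4\pi\mu^2 T)^{\frac{N-n}{2}}$ cancels exactly against the $\mu^{n-N}$ coming from the scaling of $\mathcal{H}^n$. I do not anticipate any genuine obstacle.
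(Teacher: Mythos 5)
Your proposal is correct and follows essentially the same route as the paper: both realize each symmetry as a bijection of the classes $\mathcal{VT}_1^+(T)$ (rigid motions preserving $T$, dilations sending $\mathcal{VT}_1^+(T)$ to $\mathcal{VT}_1^+(\mu^2 T)$) that exactly intertwines the weighted integrals, and then take suprema. The only cosmetic difference is that the paper first dispenses with the infinite case via Proposition~\ref{VECMBound} before doing the bookkeeping; your version renders that step unnecessary since the exact intertwining of the functionals already forces the suprema to agree whether finite or not.
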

\begin{proof}
By Proposition \ref{VECMBound}, $\lambda^n_V[\Sigma]=\infty$ if and only if $\lambda_{CM}^n[\Sigma]=\infty$. In this case, $\lambda_{CM}^n[\Sigma']=\infty $ and the same is true of $\lambda^n_V[\Sigma']$.  

Hence, we may suppose $\lambda_{CM}^n[\Sigma], \lambda_{CM}^n[\Sigma']<\infty$.
If $R:\Real^N\to \Real^N$ is a rigid motion, then $f\in \mathcal{VT}_1^+(T)$ if and only if $f\circ R \in \mathcal{VT}_1^+(T)$ and
$$
\int_{\Sigma} f\circ R d\mathcal{H}^n= \int_{R(\Sigma)} f d\mathcal{H}^n.
$$
Likewise, if $S_\rho(\mathbf{x})=\rho \mathbf{x}$ is dilation by $\rho>0$, then
$
f\in \mathcal{VT}_1^+(T)$ if and only if $ \rho^N f\circ S_\rho\in \mathcal{VT}_1^+(\rho^{-2} T)$ and
$$
\int_{\Sigma} f\circ S_\rho d\mathcal{H}^n= \rho^{-n} \int_{S_\rho(\Sigma)} f d\mathcal{H}^n.
$$
These observations and the definition of virtual entropy conclude the proof.
\end{proof}

Following \cite{Ecker2001}, we reinterpret the monotonicity property of \cite{hamiltonMonotonicityFormulasParabolic1993} and take care to establish it for non-closed submanifolds.
\begin{prop}\label{MonProp}
		Let $t\mapsto \Sigma_t\subset \Real^N, t\in [0, T_0)$ be a flow of $n$-dimensional proper submanifolds with $\lambda_{CM}^n[\Sigma_t]\leq \Lambda<\infty$ for $t\in [0, T_0)$.  Fix $[T_1,T_2]\subset [0, T_0)$ and $\tau>0$. If $W:[T_1,T_2]\times \Real^N\to \Real$ satisfies 
		\begin{enumerate}
			\item $\left(\frac{\partial}{\partial t}+\Delta_{\Real^N}\right) W=0$;
			\item  $W(t, \cdot)\in \mathcal{VT}_1^+(T_2+\tau-t)$, $t\in [T_1, T_2]$,
		\end{enumerate}
		then
$$
(4\pi \tau)^{\frac{N-n}{2}} \int_{\Sigma_{T_2}} W(T_2, \cdot) d\mathcal{H}^n \leq 		(4\pi (T_2-T_1+\tau))^{\frac{N-n}{2}} \int_{\Sigma_{T_1}} W(T_1, \cdot) d\mathcal{H}^n. 		
$$
\end{prop}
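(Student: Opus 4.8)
The plan is to show that, with $s=s(t):=T_2+\tau-t$, the weighted area
$$
\Phi(t):=(4\pi s)^{\frac{N-n}{2}}\int_{\Sigma_t}W(t,\cdot)\,d\mathcal{H}^n
$$
is non-increasing on $[T_1,T_2]$; since $s(T_1)=T_2-T_1+\tau$ and $s(T_2)=\tau$ this is exactly the asserted inequality. First note $\Phi(t)<\infty$: Item (1) of Lemma \ref{BoundLem} gives $W(t,\mathbf{x})\le (4\pi s)^{-N/2}e^{-|\mathbf{x}-\mathbf{z}_0|^2/4s}$ for a suitable $\mathbf{z}_0=\mathbf{z}_0(W(t,\cdot))$, hence $(4\pi s)^{\frac{N-n}{2}}W(t,\mathbf{x})\le (4\pi s)^{-n/2}e^{-|\mathbf{x}-\mathbf{z}_0|^2/4s}$, so by the definition of $\lambda^n_{CM}$ we get $\Phi(t)\le \lambda^n_{CM}[\Sigma_t]\le\Lambda$.

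The main computation is the one of Hamilton and Ecker, carried out with the weight $W$ in place of the Gaussian. Write $w=\log W(t,\cdot)$ and $\phi=\phi(t,\cdot)=(4\pi s)^{\frac{N-n}{2}}W(t,\cdot)$, and assume for the moment that $\Sigma_t$ is closed. Differentiating along the flow, using $\tfrac{d}{dt}\,d\mathcal{H}^n=-|\mathbf{H}|^2\,d\mathcal{H}^n$ and $\tfrac{d}{dt}(\phi\circ F)=\partial_t\phi+\langle\nabla_\Real\phi,\mathbf{H}\rangle$ (the tangential reparametrization of the flow does not affect the weighted area), gives $\frac{d}{dt}\Phi(t)=\int_{\Sigma_t}(\partial_t\phi+\langle\nabla_\Real\phi,\mathbf{H}\rangle-|\mathbf{H}|^2\phi)\,d\mathcal{H}^n$. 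Because $\partial_t W=-\Delta_\Real W$ and $\tfrac{d}{dt}(4\pi s)^{\frac{N-n}{2}}=-\tfrac{N-n}{2s}(4\pi s)^{\frac{N-n}{2}}$ we have $\partial_t\phi=-\tfrac{N-n}{2s}\phi-\Delta_\Real\phi$. Adding $0=\int_{\Sigma_t}\Delta_{\Sigma_t}\phi\,d\mathcal{H}^n$ and using the identity $\Delta_{\Sigma_t}\phi=\Delta_\Real\phi-\sum_{\alpha=1}^{N-n}\nabla^2_\Real\phi(\mathbf{n}_\alpha,\mathbf{n}_\alpha)+\langle\nabla_\Real\phi,\mathbf{H}\rangle$ for a local orthonormal normal frame $\{\mathbf{n}_\alpha\}$, then substituting $\nabla_\Real\phi=\phi\,\nabla_\Real w$, $\nabla^2_\Real\phi=\phi(\nabla^2_\Real w+\nabla_\Real w\otimes\nabla_\Real w)$ and $\mathbf{H}\perp\Sigma_t$, the integrand collapses to a perfect square plus the trace of the matrix Harnack defect:
$$
\frac{d}{dt}\Phi(t)=-\int_{\Sigma_t}\phi\left(\big|\mathbf{H}-(\nabla_\Real w)^\perp\big|^2+\sum_{\alpha=1}^{N-n}\Big(\nabla^2_\Real w(\mathbf{n}_\alpha,\mathbf{n}_\alpha)+\tfrac1{2s}\Big)\right)d\mathcal{H}^n .
$$
Since $W(t,\cdot)\in\mathcal{VT}_1^+(s)$ we have $\nabla^2_\Real\log W\ge-\tfrac1{2s}g_\Real$, so each summand $\nabla^2_\Real w(\mathbf{n}_\alpha,\mathbf{n}_\alpha)+\tfrac1{2s}\ge0$; as $\phi>0$ this forces $\tfrac{d}{dt}\Phi(t)\le0$, and integrating over $[T_1,T_2]$ gives the proposition in the closed case. (For $W(t,\cdot)=\mathcal{H}(s,\cdot,\mathbf{x}_0)\in\mathcal{G}_1(s)$ this recovers Huisken's formula, the defect being $|\mathbf{H}+(\mathbf{x}-\mathbf{x}_0)^\perp/2s|^2$.)

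The remaining — and only genuine — point, and the one flagged by the statement, is to run this for a proper, non-closed $\Sigma_t$: to justify differentiating under the integral and the identity $\int_{\Sigma_t}\Delta_{\Sigma_t}\phi\,d\mathcal{H}^n=0$. The plan is to insert a cutoff: fix $\eta_R\in C^\infty_c(\Real^N)$ with $\eta_R\equiv1$ on $B_R(\mathbf{0})$, supported in $B_{2R}(\mathbf{0})$, $|\nabla_\Real\eta_R|\le C/R$, and repeat the computation with $\eta_R^2\phi$ (the square keeps $|\nabla_\Real\eta_R^2|^2/\eta_R^2$ bounded). This is legitimate term by term and produces, besides the non-positive bulk term, two errors supported in $(B_{2R}\setminus B_R)\cap\Sigma_t$: one from the moving cutoff, of size $\le C R^{-1}\int\eta_R|\mathbf{H}|\phi$, and one from the integration by parts against $\nabla_{\Sigma_t}\eta_R^2$, of size $\le C R^{-1}\int|\nabla_\Real\phi|$. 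Splitting $|\mathbf{H}|\le|\mathbf{H}-(\nabla_\Real w)^\perp|+|(\nabla_\Real w)^\perp|$ and applying Young's inequality, the $|\mathbf{H}-(\nabla_\Real w)^\perp|$ part is absorbed into half of the bulk term at the cost of an extra $CR^{-2}\int_{\Sigma_t\cap B_{2R}}\phi\le CR^{-2}\Lambda$, while the remaining errors are controlled via the pointwise bounds of Lemma \ref{BoundLem} — $|(\nabla_\Real\log W)^\perp|\le |\mathbf{x}-\mathbf{z}_0|/2s$, $|\nabla_\Real W|\le\tfrac{|\mathbf{x}-\mathbf{z}_0|}{2s}W$, $W\le(4\pi s)^{-N/2}e^{-|\mathbf{x}-\mathbf{z}_0|^2/4s}$ — together with the area ratio bound $|\Sigma_t\cap B_\rho(p)|\le e^\pi\Lambda\rho^n$ of Lemma \ref{CMAreaRatLem} (valid at every center $p$, hence insensitive to the location of $\mathbf{z}_0(t)$). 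A dyadic decomposition gives $\int_{\Sigma_t}|\mathbf{x}-\mathbf{z}_0|e^{-|\mathbf{x}-\mathbf{z}_0|^2/4s}\,d\mathcal{H}^n\le C(n,s,\Lambda)$, so every error carries a factor $R^{-1}$ or $R^{-2}$ and tends to $0$ as $R\to\infty$, uniformly for $s\in[\tau,T_2-T_1+\tau]$ (a compact interval on which the constants depend continuously and on which $\sup W(t,\cdot)\le(4\pi\tau)^{-N/2}$). Integrating the resulting inequality over $[T_1,T_2]$ and letting $R\to\infty$ by monotone convergence ($\eta_R^2\nearrow1$, all integrands non-negative) yields
$$
\Phi(T_2)+\tfrac12\int_{T_1}^{T_2}\!\!\int_{\Sigma_t}\phi\,\big|\mathbf{H}-(\nabla_\Real\log W)^\perp\big|^2\,d\mathcal{H}^n\,dt\le\Phi(T_1),
$$
in particular $\Phi(T_2)\le\Phi(T_1)$. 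I expect this cutoff bookkeeping — making the error terms vanish using the entropy-bound area growth against the Gaussian decay of $W$ and its derivatives — to be the only real obstacle; the differential identity itself is routine.
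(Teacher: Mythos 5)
Your overall plan — cut off, run the Hamilton/Ecker monotonicity computation, and pass to the limit — is the same as the paper's, and your derivation of the bulk identity
$$
\frac{d}{dt}\Phi=-\int_{\Sigma_t}\phi\Bigl(\bigl|\mathbf{H}-(\nabla_\Real w)^\perp\bigr|^2+\sum_{\alpha}\bigl(\nabla^2_\Real w(\mathbf{n}_\alpha,\mathbf{n}_\alpha)+\tfrac{1}{2s}\bigr)\Bigr)\,d\mathcal{H}^n
$$
is correct. But there is a genuine error in your cutoff bookkeeping: you repeatedly invoke the bound $W(t,\mathbf{x})\le(4\pi s)^{-N/2}e^{-|\mathbf{x}-\mathbf{z}_0|^2/4s}$, attributing it to Item (1) of Lemma \ref{BoundLem}, but that item gives the \emph{opposite}: a Gaussian \emph{lower} bound $u(\mathbf{z}_0)e^{-|\mathbf{x}-\mathbf{z}_0|^2/4T}\le u(\mathbf{x})$, together with the flat upper bound $u\le(4\pi T)^{-N/2}$. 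A general element of $\mathcal{VT}_1^+(T)$ need not decay like a Gaussian at all (mixing a Gaussian with a log-convex factor such as $\cosh(a x_1)$ gives an example with slower decay in some directions while remaining in $\mathcal{VT}_1^+(T)$). This false bound is used both in your finiteness argument for $\Phi(t)$ and in your dyadic estimate of the tail error $R^{-1}\int|\mathbf{x}-\mathbf{z}_0|\phi$. The finiteness is true but should come from Proposition \ref{VECMBound}, which yields $\Phi(t)\le C_3\Lambda$ (with a dimensional constant, not $\Lambda$ itself); the tail errors are still tractable because on $B_{2R}\setminus B_R$ one has $|\mathbf{x}-\mathbf{z}_0|\lesssim R$, so the $R^{-1}$ cancels and you only need $\int_{\Sigma_t\setminus B_R}\phi\to0$, but establishing this uniformly in $t$ is not automatic and your argument does not supply it.

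Separately, the paper's version of the cutoff argument is cleaner than yours in a way worth noting: by quoting Ecker's formula the cutoff error enters as $\int\bigl(\tfrac{d}{dt}-\Delta_{\Sigma_t}\bigr)\phi_R\cdot V$, and because $\phi_R$ is time-independent, $\tfrac{d}{dt}\phi_R=\langle\nabla_\Real\phi_R,\mathbf{H}\rangle$ exactly cancels the $\langle\nabla_\Real\phi_R,\mathbf{H}\rangle$ inside $\Delta_{\Sigma_t}\phi_R$, leaving only $-\operatorname{tr}_{T\Sigma}\nabla^2_\Real\phi_R$. That error is bounded pointwise by $\kappa R^{-2}$ with \emph{no} mean-curvature term, so Proposition \ref{VECMBound} immediately gives a uniform-in-$t$ bound $|F_R'(t)|\lesssim R^{-2}C_3\Lambda$ and the Young-inequality absorption you propose is unnecessary. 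Your hand computation produces $\mathbf{H}$-dependent errors only because it does not exploit this cancellation; rearranging as the paper does would eliminate both the absorption step and the need for any pointwise decay of $W$.
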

\begin{proof}
Fix $R\geq 1$ and let $\phi_R\geq 0$ be a smooth cutoff function that is identically $1$ in $B_R$ and supported in $B_{2R}$ and so 
$$
\phi_R+R|\nabla_\Real \phi_R|+R^2 |\nabla^2_\Real \phi_R|\leq \kappa
$$
where $\kappa=\kappa(N)>1$.  Set 
$$
V(t, \mathbf{x})= (4\pi (T_2+\tau-t))^{\frac{N-n}{2}}W(t, \mathbf{x})>0.
$$

Using the notation of \cite[Section 3.14]{Ecker2001}, for $t\in (T_1, T_2)$ if we set
$$
F_R(t)=\int_{\Sigma_t} \phi_R(\cdot) V(t, \cdot) d\mathcal{H}^n,
$$
then, following \cite[Section 3.14]{Ecker2001},
$$
F_R'(t)=\int_{\Sigma_t}\left( \left(\frac{d}{dt}-\Delta_{\Sigma_t}\right) \phi_R-\left|\mathbf{H}_{\Sigma_t}+\frac{\nabla^\perp_{\Sigma_t} V}{V}\right|^2 \phi\right) V +\phi_R Q_V  d\mathcal{H}^n
$$
where
$$
Q_V=Q_V(t, \mathbf{x}, N_p \Sigma_t)=\left(\frac{\partial }{\partial t}+\Delta_\Real\right) V-\sum_{i=1}^{N-n}\left( \nabla^2_{\Real} V (E_i, E_i)-\frac{(E_i \cdot \nabla_\Real V)^2}{V} \right)
$$
and $E_1, \ldots, E_{N-n}$ is a choice of orthonormal frame of the normal bundle.  One computes
\begin{align*}
Q_V(t, \mathbf{x}, N_p \Sigma_t)&=- V \sum_{i=1}^{N-n}\left( \nabla_\Real^2\log V (E_i,E_i)+\frac{1}{2 (T_2+\tau-t)} \right)\\
&=- V \sum_{i=1}^{N-n}\left( \nabla_\Real^2\log W (E_i,E_i)+\frac{g_{\Real}(E_i,E_i)}{2 (T_2+\tau-t)} \right).
\end{align*}
In particular, as $W(t, \cdot)\in \mathcal{VT}_1^+(T_2+\tau-t)$  and $V>0$, 
$$
Q_V\leq 0.
$$
Using Proposition \ref{VECMBound},  it follows that
$$
F_R(T_2)-F_R(T_1) \leq \kappa R^{-1} \int_{T_1}^{T_2} \int_{\Sigma_t}  V d\mathcal{H}^n dt\leq C_3 \kappa  \Lambda R^{-1}   (T_2-T_1).
$$
Hence, sending $R\to \infty$ yields
$$
\int_{\Sigma_{T_2}} V d\mathcal{H}^n\leq \int_{\Sigma_{T_1}} V d\mathcal{H}^n.
$$
Expressing $V$ in terms of $W$ completes the proof.
\end{proof}

We can now prove the claimed monotonicity of virtual entropy, i.e., Theorem \ref{LCMonThm}
\begin{proof}[Proof of Theorem \ref{LCMonThm}]
  If $\lambda_{V}^n[\Sigma_0]<\infty$, then the same is true of $\lambda_{CM}^n[\Sigma_0]$ and so the monotonicity of Colding-Minicozzi entropy ensures this for each $\Sigma_t$. In particular,  $\lambda_V^n[\Sigma_t]$ is also finite.  Hence,  for any $T_2\in (0, T)$ and $\epsilon>0$, there is a $\tau_\epsilon>0$ and $u_{T_2,\epsilon}\in\mathcal{VT}_1^+(\tau_\epsilon)$ so
  $$
  \infty>\lambda_V^n[\Sigma_{T_2}]\geq  (4\pi \tau_\epsilon)^{\frac{N-n}{2}} \int_{\Sigma_{T_2}} u_{T_2, \epsilon} d\mathcal{H}^n\geq \lambda_{V}^n[\Sigma_{T_2}]-\epsilon.
  $$
 Let $U_{T_2, \epsilon}(t, \cdot)=\mathcal{H}_t u_{T_2,\epsilon}$ so
 $$
 U_{T_2,\epsilon}(t,\cdot) \in \mathcal{VT}_1^+(\tau_\epsilon+t).
 $$
 Fix a $T_1\in [0, T_2)$ and, for $t\in [T_1, T_2]$, let
 $$
 W(t, \cdot)= U(T_2 -t, \cdot).
 $$
 One readily checks $W$ satisfies the hypotheses of Proposition \ref{MonProp}.  Hence, 
 \begin{align*}
  (4\pi \tau_\epsilon)^{\frac{N-n}{2}} \int_{\Sigma_{T_2}} W(T_2, \cdot) d\mathcal{H}^n \leq (4\pi (T_2-T_1+\tau_\epsilon))^{\frac{N-n}{2}} \int_{\Sigma_{T_1}} W(T_1, \cdot) d\mathcal{H}^n.
 \end{align*}
   It follows that
 \begin{align*}
    \lambda_{V}^n[\Sigma_{T_2}]-\epsilon&\leq   (4\pi \tau_\epsilon)^{\frac{N-n}{2}} \int_{\Sigma_{T_2}} u_{T_2, \epsilon} d\mathcal{H}^n=(4\pi \tau_\epsilon)^{\frac{N-n}{2}} \int_{\Sigma_{T_2}} W(T_2, \cdot) d\mathcal{H}^n \\
    &\leq (4\pi (T_2-T_1+\tau_\epsilon))^{\frac{N-n}{2}} \int_{\Sigma_{T_1}} W(T_1, \cdot) d\mathcal{H}^n\leq \lambda_V^n[\Sigma_{T_1}].
\end{align*}
Where we used that $W(T_1, \cdot )\in \mathcal{VT}_1^+(T_2-T_1+\tau_\epsilon)$ for the final inequality. As $\epsilon>0$ and $0\leq T_1<T_2<T$ are arbitrary the result follows.
    \end{proof}

Using this monotonicity and an observation from \cite{magman08} we obtain precise estimates on the integrals in the definition of $\lambda_{V}^n$ in terms of the Colding-Minicozzi entropy.
\begin{lem}\label{VTBoundCMLem}
	For $T>0$, let $t\in [-T, 0]\mapsto \Sigma_t\subset \Real^N$ be the mean curvature flow of $n$-dimensional proper submanifolds with $\lambda_{V}^n[\Sigma_t]<\infty$.  If $\tau_0>0$ and $u_0\in \mathcal{VT}_1^+(\tau_0)$, then 
	$$
	(4\pi \tau_0)^{\frac{N-n}{2}} \int_{\Sigma_0} u_0 d\mathcal{H}^n\leq ( 1+T^{-1}\tau_0)^{\frac{N-n}{2}}  \lambda_{CM}^n[\Sigma_{-T}].
	$$
\end{lem}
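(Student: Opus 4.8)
The plan is to run the monotonicity formula of Proposition~\ref{MonProp} backwards along the flow, using as test weight the forward heat evolution of $u_0$: Proposition~\ref{MHInequProp} guarantees that this evolution stays in the class $\mathcal{VT}_1^+(\cdot)$ with the correct virtual time, and the boundary term it produces at time $-T$ is then rewritten, via the Magni--Mantegazza Fubini trick, in terms of $\lambda_{CM}^n[\Sigma_{-T}]$.

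Concretely, let $U(s,\cdot)=\mathcal{H}_s u_0$ be the forward heat flow of $u_0$, so by Proposition~\ref{MHInequProp} one has $U(s,\cdot)\in\mathcal{VT}_1^+(\tau_0+s)$ for all $s\geq 0$. Define $W\colon [-T,0]\times\Real^N\to\Real$ by $W(t,\cdot)=U(-t,\cdot)$. Then $\left(\frac{\partial}{\partial t}+\Delta_\Real\right)W=0$, $W(0,\cdot)=u_0$, $W(-T,\cdot)=U(T,\cdot)$, and $W(t,\cdot)\in\mathcal{VT}_1^+(\tau_0-t)$ for $t\in[-T,0]$. Since $\lambda_V^n[\Sigma_t]<\infty$ forces $\lambda_{CM}^n[\Sigma_t]<\infty$ by Proposition~\ref{VECMBound}, and the Colding--Minicozzi entropy is non-increasing under mean curvature flow, we get the uniform bound $\lambda_{CM}^n[\Sigma_t]\leq\lambda_{CM}^n[\Sigma_{-T}]<\infty$ on $[-T,0]$; hence the hypotheses of Proposition~\ref{MonProp} are satisfied with $T_1=-T$, $T_2=0$ and $\tau=\tau_0$ (after the harmless time translation needed to match its $[0,T_0)$ normalization, noting its proof uses the flow only on the compact interval in question). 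The point of the choice $\tau=\tau_0$ is that the required membership $W(t,\cdot)\in\mathcal{VT}_1^+(T_2+\tau-t)=\mathcal{VT}_1^+(\tau_0-t)$ then holds exactly. Proposition~\ref{MonProp} yields
$$
(4\pi\tau_0)^{\frac{N-n}{2}}\int_{\Sigma_0}u_0\,d\mathcal{H}^n\leq (4\pi(T+\tau_0))^{\frac{N-n}{2}}\int_{\Sigma_{-T}}U(T,\cdot)\,d\mathcal{H}^n.
$$

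To finish I would bound the right-hand integral. Writing $U(T,\mathbf{x})=\int_{\Real^N}u_0(\mathbf{y})\,\mathcal{H}(T,\mathbf{x},\mathbf{y})\,d\mathcal{L}^N(\mathbf{y})$ as in \eqref{HeatFlowEqn} and applying Tonelli's theorem (all integrands nonnegative),
$$
\int_{\Sigma_{-T}}U(T,\cdot)\,d\mathcal{H}^n=\int_{\Real^N}u_0(\mathbf{y})\left(\int_{\Sigma_{-T}}\mathcal{H}(T,\mathbf{x},\mathbf{y})\,d\mathcal{H}^n(\mathbf{x})\right)d\mathcal{L}^N(\mathbf{y}).
$$
For each fixed $\mathbf{y}$, taking $\rho=T$ and $\mathbf{x}_0=\mathbf{y}$ in the definition of $\lambda_{CM}^n$ gives $\int_{\Sigma_{-T}}\mathcal{H}(T,\mathbf{x},\mathbf{y})\,d\mathcal{H}^n(\mathbf{x})\leq (4\pi T)^{-\frac{N-n}{2}}\lambda_{CM}^n[\Sigma_{-T}]$, and since $u_0$ has unit mass this gives $\int_{\Sigma_{-T}}U(T,\cdot)\,d\mathcal{H}^n\leq (4\pi T)^{-\frac{N-n}{2}}\lambda_{CM}^n[\Sigma_{-T}]$. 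Substituting into the previous display and simplifying $\left(\frac{T+\tau_0}{T}\right)^{\frac{N-n}{2}}=(1+T^{-1}\tau_0)^{\frac{N-n}{2}}$ gives the claim.

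There is no genuine analytic obstacle; the argument is bookkeeping around two already-established results. The step requiring the most care is verifying the virtual-time hypothesis~(2) of Proposition~\ref{MonProp}: one needs the backward heat flow $W(t,\cdot)$ to have virtual time at least $\tau_0-t$ throughout $[-T,0]$, which is exactly the linear-growth statement of Proposition~\ref{MHInequProp} with the free parameter $\tau$ set equal to $\tau_0$ — together with checking, via Proposition~\ref{VECMBound} and monotonicity of $\lambda_{CM}^n$, that $\lambda_{CM}^n[\Sigma_t]$ is finite and uniformly bounded on $[-T,0]$ so that the cutoff error term in the proof of Proposition~\ref{MonProp} may be discarded.
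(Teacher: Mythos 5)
Your proof follows the same route as the paper's: apply Proposition~\ref{MonProp} with $W(t,\cdot)=\mathcal{H}_{-t}u_0$ on $[-T,0]$ and $\tau=\tau_0$, then use Fubini and the definition of $\lambda_{CM}^n$ to convert the boundary integral at time $-T$ into $(4\pi T)^{\frac{n-N}{2}}\lambda_{CM}^n[\Sigma_{-T}]$. You spell out the verification of the hypotheses of Proposition~\ref{MonProp} more carefully than the paper does (the time translation, the uniform entropy bound via monotonicity, and the membership $W(t,\cdot)\in\mathcal{VT}_1^+(\tau_0-t)$ coming from Proposition~\ref{MHInequProp}), but the underlying argument is identical.
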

\begin{proof}
	Set $U(t, \cdot)=\mathcal{H}_t u_0$. 
	By Proposition \ref{MonProp}, 
	\begin{align*}
		(4\pi \tau_0)^{\frac{N-n}{2}} \int_{\Sigma_0} u_0 d\mathcal{H}^n
		&\leq (4\pi (\tau_0+T))^{\frac{N-n}{2}} \int_{\Sigma_{-T}} U(T, \cdot) d\mathcal{H}^n.
	\end{align*} 
	By \eqref{HeatFlowEqn},  Fubini's theorem,  the definition of Colding-Minicozzi entropy and fact that $u_0$ has mass one yields
	\begin{align*}
		\int_{\Sigma_{-T}} U(T, \cdot)  d\mathcal{H}^n &=   \int_{\Sigma_{-T}} \int_{\Real^N} u_0(\mathbf{y})\mathcal{H}(T, \mathbf{x}, \mathbf{y}) d\mathcal{L}^N(\mathbf{y}) d\mathcal{H}^n(\mathbf{x}) \\
		&= \int_{\Real^N} u_0(\mathbf{y}) \int_{\Sigma_{-T}} \mathcal{H}(T, \mathbf{x}, \mathbf{y})d\mathcal{H}^n(\mathbf{x}) d\mathcal{L}^N(\mathbf{y} )\\
		&\leq (4\pi T)^{\frac{n-N}{2}} \lambda_{CM}^n[\Sigma_{-T}] \int_{\Real^N} u_0(\mathbf{y})  d\mathcal{L}^N(\mathbf{y})\\
		& = (4\pi T)^{\frac{n-N}{2}} \lambda_{CM}^n[\Sigma_{-T}] .
	\end{align*}
 The claimed inequality follows immediately.
\end{proof}

We now establish \eqref{SelfShrinkEntEqualEqn}.
\begin{thm}\label{SSVTCMThm}
  For a $n$-dimensional proper self-shrinker, $\Sigma$, $\lambda_{CM}^n[\Sigma]=\lambda_{V}^n[\Sigma]$.
\end{thm}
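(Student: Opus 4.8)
The plan is to establish the nontrivial direction $\lambda_{V}^n[\Sigma]\leq \lambda_{CM}^n[\Sigma]$; the reverse inequality $\lambda_{CM}^n[\Sigma]\leq \lambda_{V}^n[\Sigma]$ is already in hand since $\mathcal{G}_1(T)\subset \mathcal{VT}_1^+(T)$ (see also Proposition \ref{VECMBound}). If $\lambda_{CM}^n[\Sigma]=\infty$ there is nothing to prove, so I would assume $\lambda_{CM}^n[\Sigma]<\infty$, which by Proposition \ref{VECMBound} forces $\lambda_{V}^n[\Sigma]<\infty$ as well.

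The central idea is that the self-shrinker structure lets us place $\Sigma$ at the final time of an arbitrarily long mean curvature flow and then invoke the linear-in-time gain of Lemma \ref{VTBoundCMLem}. Concretely, I would set $\hat\Sigma_t=\sqrt{1-t}\,\Sigma$ for $t\leq 0$. Using the self-shrinker equation $\mathbf{H}_\Sigma+\frac{\mathbf{x}^\perp}{2}=\mathbf{0}$, a short scaling computation (mean curvature scales like $(1-t)^{-1/2}$, and so does the normal velocity $-\tfrac{1}{2}(1-t)^{-1/2}\mathbf{x}^\perp$) shows that $t\mapsto \hat\Sigma_t$ is a mean curvature flow of proper $n$-dimensional submanifolds with $\hat\Sigma_0=\Sigma$; since each factor $\sqrt{1-t}$ with $t\in[-T,0]$ lies in the compact interval $[1,\sqrt{1+T}]$, properness and smoothness persist on every such time interval. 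By Lemma \ref{InvariantLem} virtual entropy is invariant under dilation, so $\lambda_{V}^n[\hat\Sigma_t]=\lambda_{V}^n[\Sigma]<\infty$ for all $t$, and Colding-Minicozzi entropy is likewise dilation invariant, so $\lambda_{CM}^n[\hat\Sigma_t]=\lambda_{CM}^n[\Sigma]$.

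Next I would fix $T>0$, $\tau_0>0$, and $u_0\in \mathcal{VT}_1^+(\tau_0)$, and apply Lemma \ref{VTBoundCMLem} to the flow $t\in[-T,0]\mapsto \hat\Sigma_t$. Since $\hat\Sigma_0=\Sigma$ and $\lambda_{CM}^n[\hat\Sigma_{-T}]=\lambda_{CM}^n[\Sigma]$, this gives
$$
(4\pi\tau_0)^{\frac{N-n}{2}}\int_\Sigma u_0\,d\mathcal{H}^n\leq \left(1+T^{-1}\tau_0\right)^{\frac{N-n}{2}}\lambda_{CM}^n[\Sigma].
$$
Letting $T\to\infty$ sends $(1+T^{-1}\tau_0)^{\frac{N-n}{2}}\to 1$ (note $N-n\geq 0$), hence $(4\pi\tau_0)^{\frac{N-n}{2}}\int_\Sigma u_0\,d\mathcal{H}^n\leq \lambda_{CM}^n[\Sigma]$. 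Taking the supremum over $\tau_0>0$ and $u_0\in\mathcal{VT}_1^+(\tau_0)$ yields $\lambda_{V}^n[\Sigma]\leq \lambda_{CM}^n[\Sigma]$, and combined with the reverse inequality this proves $\lambda_{CM}^n[\Sigma]=\lambda_{V}^n[\Sigma]$.

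The only steps that need genuine (though routine) care are: (i) checking that $\hat\Sigma_t=\sqrt{1-t}\,\Sigma$ is a mean curvature flow in the precise sense demanded by Lemma \ref{VTBoundCMLem} — purely the scaling identity built from $\mathbf{H}_\Sigma=-\mathbf{x}^\perp/2$, together with the observation that no degeneration occurs on $[-T,0]$; and (ii) confirming the finiteness hypothesis $\lambda_{V}^n[\hat\Sigma_t]<\infty$, which is exactly where the standing assumption $\lambda_{CM}^n[\Sigma]<\infty$ and Proposition \ref{VECMBound} enter. I do not expect a serious obstacle: no compactness of $\Sigma$ is used, and the whole argument rests on Lemma \ref{VTBoundCMLem} and a limit in $T$ that is uniform in the test data $(\tau_0,u_0)$.
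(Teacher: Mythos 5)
Your proposal is correct and follows the paper's own argument essentially verbatim: the same self-similar flow $\Sigma_t=\sqrt{1-t}\,\Sigma$, the same appeal to Lemma~\ref{VTBoundCMLem} together with scaling invariance of $\lambda_{CM}^n$, and the same limit $T\to\infty$ before taking the supremum over $(\tau_0,u_0)$. The auxiliary remarks you add (why the rescaled family is a mean curvature flow, why Proposition~\ref{VECMBound} gives finiteness) are the right bookkeeping details and do not change the structure of the argument.
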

\begin{proof}
 By Proposition \ref{VECMBound}, $\lambda_{V}^n[\Sigma]=\infty$ if and only if $\lambda_{CM}^n[\Sigma]=\infty$ and so we may assume both quantities are finite. The inequality $\lambda_{CM}^n[\Sigma]\leq \lambda_V^n[\Sigma]$ is shown in Proposition \ref{VECMBound}.

 Fix $\tau>0$ and chose any $u_0\in \mathcal{VT}_1^+(\tau)$.  
  Let $t\in (-\infty, 0]\mapsto \Sigma_t=\sqrt{1-t}\Sigma$ be the self-similar flow associated to $\Sigma$ shifted in time. For fixed $t<0$, it follows from Lemma \ref{VTBoundCMLem}, the definition of $\Sigma_t$ and the scaling invariance of Colding-Minicozzi entropy that
  	$$
  (4\pi \tau_0)^{\frac{N-n}{2}} \int_{\Sigma} u_0 d\mathcal{H}^n\leq ( 1+(-t)^{-1}\tau_0)^{\frac{N-n}{2}}  \lambda_{CM}^n[\Sigma].
  $$
Taking $t\to -\infty$ implies
$$
 (4\pi \tau_0)^{\frac{N-n}{2}} \int_{\Sigma} u_0 d\mathcal{H}^n\leq  \lambda_{CM}^n[\Sigma].
$$  
As $\tau_0>0$ and $u_0\in \mathcal{VT}_1^+(\tau_0)$ were arbitrary, $\lambda_V^n[\Sigma]\leq \lambda_{CM}^n[\Sigma]$ completing the proof.
\end{proof}
\begin{rem}
	This result can also be proved using Proposition \ref{LongTimeHeatProp}.  This provides a  heuristic explanation for its validity for self-shrinkers.
\end{rem}

\section{Li-Yau Conformal volume and Stable Conformal Volume}

For $\rho>0$, recall we defined certain weights on $\Real^N$ by
$$
\WW{ n}{\rho}(\mathbf{x})=\frac{\rho^n}{(1+\frac{1}{4}\rho^2|\mathbf{x}|^2)^n}\in C^\infty(\Real^N).
$$
Notice that $\WW{ n}{\rho}\in L^1(\Real^N)$ if and only if $2n>N$.
When $n=N\geq 1$, this is the density on $\Real^N$ from the volume form of the unit $N$-sphere $\mathbb{S}^N\subset \Real^{N+1}$ induced by the stereographic projection map $\mathcal{S}$ of \eqref{SteroEqn} precomposed with the scaling $\mathbf{x}\mapsto \rho \mathbf{x}$;  conjugating with $\mathcal{S}$ gives a M\"{o}bius  transformation on $\mathbb{S}^N$.  Likewise,  Euclidean translations correspond to certain M\"{o}bius  transformations that fix $\infty$, and the corresponding volume forms are given by the translated weights $\WW{N}{\rho}(\cdot -\mathbf{x}_0)$.   

When $n\leq N$, these are the densities induced on $n$-dimensional submanifolds by $\mathcal{S}^*g_{\mathbb{S}}$ by same transformations.  Thus  for the $n$-dimensional affine plane $\Real^n\subset \Real^N$ and $\mathbf{x}_0\in \Real^n$,
$$
|\mathbb{S}^n|=\int_{\Real^n}\WW{ n}{\rho}(\cdot -\mathbf{x}_0)d\mathcal{H}^n \mbox{ and } |\mathbb{S}^N|=\int_{\Real^N} \WW{ N}{\rho}(\cdot -\mathbf{x}_0) d \mathcal{L}^N.
$$
Likewise, when $\Sigma\subset \Real^N$ a general $n$-dimensional submanifold, 
$$
\lambda_{LY}^n[\Sigma]=\sup_{\mathbf{x}_0\in \Real^N, \rho>0} \int_{\Sigma} \WW{ n}{\rho}(\mathbf{x}-\mathbf{x_0}),
$$
is equal to the Li-Yau $N$-conformal volume of $\mathcal{S}(\Sigma)\subset \mathbb{S}^N$.    In what follows it will be convenient to introduce the following modified family of weights for $n, M, \rho>0$
$$
\hat{W}_{M, \rho}^n(\mathbf{x})= (4\pi \rho )^{-\frac{n}{2}} (M \rho)^{\frac{M}{2}} \WW{M}{(M\rho)^{-\frac{1}{2}}}(\mathbf{x})= (4\pi \rho )^{-\frac{n}{2}}\frac{1}{(1+\frac{1}{4 M \rho} |\mathbf{x}|^2)^M}
$$

We observe the following elementary integration result.  
\begin{lem}  \label{IterateLem}
	Suppose that $\Sigma \subset \Real^N$ is a $n$-dimensional submanifold.  	For $m, M\in \mathbb{N}$, and $\mathbf{x}_0=(\mathbf{y}_0, \mathbf{z}_0)\in \Real^N\times \Real^{2m}$,
	$$
	\int_{\Sigma\times \Real^{2m}} \WW{ M+2m}{ \rho}(\mathbf{x}-\mathbf{x}_0) d\mathcal{H}^{n+2m}= \frac{C_{M, m}}{\rho^m} \int_\Sigma \WW{ M+m}{ \rho}(\mathbf{y}-\mathbf{y}_0) d\mathcal{H}^n
	$$
	where $\Sigma\times \Real^{2m}\subset \Real^N\times \Real^{2m}=\Real^{N+2m}$ and
	$$
	C_{M,m}=\frac{(4\pi)^m (M+m-1)!}{(M+2m-1)!}=\frac{2^m |\mathbb{S}^{M+2m}||\mathbb{S}^{M+2m-1}|}{|\mathbb{S}^{M+m}||\mathbb{S}^{M+m-1}|}.
	$$
	As a consequence,
    $$
   \bar{\lambda}_{LY}^{n+2m} [\Sigma]= \hat{C}_{n,m} \sup_{\rho>0, \mathbf{y}_0\in \Real^N}\int_{\Sigma} \hat{W}_{n+m, \rho}^n(\mathbf{y}-\mathbf{y}_0)d\mathcal{H}^n, 
    $$
	where
	$$
	\hat{C}_{n,m}=\left( \frac{4\pi}{n+m}\right)^{\frac{n}{2}} |\mathbb{S}^{n+2m}|^{-1} C_{n,m}.
	$$

\end{lem}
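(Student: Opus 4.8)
The plan is to establish the integral identity by Tonelli's theorem, collapse the integral over the $\Real^{2m}$-factor to a one-variable Beta integral, and then read off the stated formula for $\bar\lambda_{LY}^{n+2m}[\Sigma\times\Real^{2m}]$ by rescaling the dilation parameter. Writing $\mathbf{x}=(\mathbf{y},\mathbf{z})\in\Real^N\times\Real^{2m}$ so that $|\mathbf{x}-\mathbf{x}_0|^2=|\mathbf{y}-\mathbf{y}_0|^2+|\mathbf{z}-\mathbf{z}_0|^2$, non-negativity of the integrand together with Tonelli's theorem gives
$$
\int_{\Sigma\times\Real^{2m}}\WW{M+2m}{\rho}(\mathbf{x}-\mathbf{x}_0)\,d\mathcal{H}^{n+2m}=\int_\Sigma\left(\int_{\Real^{2m}}\frac{\rho^{M+2m}\,d\mathcal{L}^{2m}(\mathbf{z})}{\left(1+\frac{1}{4}\rho^2\left(|\mathbf{y}-\mathbf{y}_0|^2+|\mathbf{z}-\mathbf{z}_0|^2\right)\right)^{M+2m}}\right)d\mathcal{H}^n(\mathbf{y}).
$$
Setting $a=a(\mathbf{y})=1+\frac{1}{4}\rho^2|\mathbf{y}-\mathbf{y}_0|^2>0$, translating $\mathbf{z}\mapsto\mathbf{z}+\mathbf{z}_0$, and substituting $\mathbf{z}=\frac{2\sqrt{a}}{\rho}\mathbf{w}$ turns the inner integral into $4^m\rho^M a^{-(M+m)}I_{m,M}$, where $I_{m,M}=\int_{\Real^{2m}}(1+|\mathbf{w}|^2)^{-(M+2m)}\,d\mathcal{L}^{2m}(\mathbf{w})$. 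Since $\rho^M a^{-(M+m)}=\rho^{-m}\,\WW{M+m}{\rho}(\mathbf{y}-\mathbf{y}_0)$, this yields the first identity with $C_{M,m}=4^mI_{m,M}$.

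It remains to evaluate $I_{m,M}$ and to match the two expressions for $C_{M,m}$. Polar coordinates in $\Real^{2m}$ and the substitution $s=r^2$ give $I_{m,M}=\frac{1}{2}|\mathbb{S}^{2m-1}|\,B(m,M+m)=\frac{\pi^m(M+m-1)!}{(M+2m-1)!}$, using $|\mathbb{S}^{2m-1}|=\frac{2\pi^m}{(m-1)!}$ and $B(m,M+m)=\frac{(m-1)!\,(M+m-1)!}{(M+2m-1)!}$; hence $C_{M,m}=\frac{(4\pi)^m(M+m-1)!}{(M+2m-1)!}$. The alternative form follows from $|\mathbb{S}^{k-1}|=\frac{2\pi^{k/2}}{\Gamma(k/2)}$ and the Legendre duplication formula $\Gamma(z)\Gamma(z+\frac{1}{2})=2^{1-2z}\sqrt{\pi}\,\Gamma(2z)$, which collapse $|\mathbb{S}^{k-1}|\,|\mathbb{S}^{k}|$ to $\frac{2^{k+1}\pi^{k}}{(k-1)!}$; applying this with $k=M+2m$ and $k=M+m$ and taking the quotient recovers $\frac{(4\pi)^m(M+m-1)!}{(M+2m-1)!}$. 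This Gamma-function bookkeeping is the one routine-but-fiddly point, and really the only place anything beyond elementary calculus enters.

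For the consequence, I would unwind $\bar\lambda_{LY}^{n+2m}[\Sigma\times\Real^{2m}]=|\mathbb{S}^{n+2m}|^{-1}\sup_{\rho>0,\,\mathbf{x}_0}\int_{\Sigma\times\Real^{2m}}\WW{n+2m}{\rho}(\mathbf{x}-\mathbf{x}_0)\,d\mathcal{H}^{n+2m}$ and insert the first identity with $M=n$. The resulting inner integral depends on $\mathbf{x}_0$ only through its $\Real^N$-component $\mathbf{y}_0$ — legitimate precisely because of translation invariance in the $\Real^{2m}$-factor — so the supremum reduces to $\sup_{\rho>0,\,\mathbf{y}_0\in\Real^N}C_{n,m}\rho^{-m}\int_\Sigma\WW{n+m}{\rho}(\mathbf{y}-\mathbf{y}_0)\,d\mathcal{H}^n$. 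From the definition of $\hat W^n_{M,\rho}$ one checks directly that $\WW{n+m}{\sigma}(\mathbf{x})=\left(\frac{4\pi}{n+m}\right)^{n/2}\sigma^m\,\hat W^n_{n+m,\,\frac{1}{(n+m)\sigma^2}}(\mathbf{x})$, so the factors $\rho^{-m}$ and $\rho^m$ cancel; since $\rho\mapsto\frac{1}{(n+m)\rho^2}$ is a bijection of $(0,\infty)$, the supremum equals $C_{n,m}\left(\frac{4\pi}{n+m}\right)^{n/2}\sup_{\rho>0,\,\mathbf{y}_0}\int_\Sigma\hat W^n_{n+m,\rho}(\mathbf{y}-\mathbf{y}_0)\,d\mathcal{H}^n$, and multiplying by $|\mathbb{S}^{n+2m}|^{-1}$ produces $\hat C_{n,m}$ as claimed. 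I do not anticipate any genuine obstacle: the real content is the substitution $\mathbf{z}=\frac{2\sqrt{a}}{\rho}\mathbf{w}$ together with the Beta-integral evaluation, and the only points requiring care are the Gamma-function identity and the reduction of $\sup_{\mathbf{x}_0}$ to $\sup_{\mathbf{y}_0}$.
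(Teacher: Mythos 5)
Your proof is correct. The overall architecture — Fubini/Tonelli to separate the $\Real^{2m}$ integral, evaluate it explicitly, then reparametrize $\rho$ to expose $\hat W^n_{n+m,\rho}$ — is the same as the paper's. The one place you deviate is the evaluation of $C_{M,m}$: the paper peels off the $\Real^{2m}$ factor two dimensions at a time, computing each $\Real^2$-integral directly (which lowers the exponent by one at each step and produces a telescoping factor $\frac{4\pi}{M+2m-k}$), and then derives the second form of $C_{M,m}$ from the elementary recursion $|\mathbb{S}^{M+2}|=\frac{2\pi}{M+1}|\mathbb{S}^M|$. You instead do the $2m$-dimensional integral in one shot as a Beta integral and then match the two expressions for $C_{M,m}$ via Legendre duplication. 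Both routes are valid; the paper's is slightly more elementary since it never invokes the Gamma function, while yours is more direct and avoids the induction. For the ``consequence'' step, your reparametrization identity $\WW{n+m}{\sigma}=\bigl(\tfrac{4\pi}{n+m}\bigr)^{n/2}\sigma^m\,\hat W^n_{n+m,\frac{1}{(n+m)\sigma^2}}$ is exactly the change of variable $\rho\mapsto((n+m)\rho)^{-1/2}$ the paper uses, just stated as an identity rather than as a substitution inside the supremum; the reduction of $\sup_{\mathbf{x}_0}$ to $\sup_{\mathbf{y}_0}$ by translation invariance in the $\Real^{2m}$-factor is the same in both.
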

\begin{proof}
	Let us denote the coordinates on $\Real^{N+2}=\Real^N\times \Real^{2}$ by $
	\mathbf{x}=(\mathbf{y}, \mathbf{z})$. Write $\mathbf{x}_0=(\mathbf{y}_0, \mathbf{z}_0)$.  By replacing $\Sigma$ with $\Sigma-(\mathbf{y}_0, \mathbf{0})$ we may, without loss of generality, assume  that  $\mathbf{y}_0=\mathbf{0}$.
 By Fubini's theorem, 
	\begin{align*}
	\int_{\Sigma\times \Real^2} &\WW{ M+2}{ \rho} (\mathbf{x}-\mathbf{x}_0)d\mathcal{H}^{n+2} = \int_\Sigma \int_{\Real^{2}} \frac{\rho^{M+2}}{(1+\frac{\rho^2}{4}|\mathbf{y}|^2+\frac{\rho^2}{4}|\mathbf{z}-\mathbf{z}_0|^2)^{M+2}} d\mathcal{L}^2(\mathbf{z})d\mathcal{L}^n(\mathbf{y})\\
	&= \int_\Sigma \frac{\rho^{M+2}}{(1+\frac{\rho^2}{4} |\mathbf{y}|^2)^{M+2}} \int_{\Real^2} \left(1+\frac{\rho^2}{4+{\rho^2} |\mathbf{y}|^2} |\mathbf{z}-\mathbf{z}_0|^2\right)^{-M-2}d\mathcal{L}^2(\mathbf{z}) d\mathcal{H}^n(\mathbf{y})\\
	&= \frac{4\pi}{M+1} \frac{1}{\rho} \int_{\Sigma} \WW{ M+1}{ \rho} (\mathbf{y})d\mathcal{H}^n(\mathbf{y}).
	\end{align*}
	Where we used  polar coordinates and translation invariance to  evaluate 
	$$
 \int_{\Real^2} \left(1+\frac{\rho^2}{4+{\rho^2} |\mathbf{y}|^2} |\mathbf{z}-\mathbf{z}_0|^2\right)^{-M-2}d\mathcal{L}^2(\mathbf{z}) =
 \frac{4\pi}{M+1}\frac{1+\frac{\rho^2}{4} |\mathbf{y}|^2}{\rho^2}.
	$$
	Iterating this $m$ times  verifies the first equality.  The expression for the $C_{M,m}$  follows from the recursion identity $|\mathbb{S}^{M+2}|=\frac{2\pi}{M+1} |\mathbb{S}^M|$.

	To conclude the proof,  write $\mathbf{x}_0\in \Real^{N+2m}$ as $\mathbf{x}_0=(\mathbf{y}_0, \mathbf{z}_0)\in \Real^N\times \Real^{2m}$.  By translation invariance,  a simple change of variables and fact that $\rho>0$ if and only if $((n+m)\rho)^{-\frac{1}{2}}>0$, one has
\begin{align*}
	\lambda_{LY}^{n+2m}&[\Sigma\times \Real^{2m}]=\sup_{\mathbf{x}_0\in \Real^{N+2m}, \rho>0} \int_{\Sigma\times \Real^{2m}} \WW{n+2m}{ \rho}(\mathbf{x}-\mathbf{x}_0) d\mathcal{H}^{n+2m}\\
 &= \sup_{\mathbf{y}_0\in \Real^{N}, \rho>0} \int_{\Sigma\times \Real^{2m}} \WW{ n+2m}{ ((n+m)\rho)^{-\frac{1}{2}}}(\mathbf{x}-(\mathbf{y}_0, \mathbf{0})) d\mathcal{H}^{n+2m}\\
 &=\sup_{\rho>0} (4\pi \rho)^{\frac{n}{2}} ((n+m) \rho)^{-\frac{n}{2}} C_{n,m}  \sup_{\mathbf{y}_0\in \Real^N} \int_{\Sigma} \hat{W}_{n+m,\rho}^n(\mathbf{y}-\mathbf{y}_0)d\mathcal{H}^n.
\end{align*}
Where the last equality follow from the first part of the proof and the definition of $\hat{W}^n_{n+m, \rho}$.  The claim follows immediately.

\end{proof}

\begin{prop}\label{StableLYIterateProp}
	Let $\Sigma\subset \Real^N$ be an $n$-dimensional submanifold, possibly with boundary.  If $\Sigma$ has a finite conformal volume that is realized, i.e., there are $\mathbf{x}_0\in\Real^N$ and $\rho_0>0$ so
	$$
	\lambda_{LY}^n[\Sigma]=\int_\Sigma\WW{ n}{\rho_0}(\mathbf{x}-\mathbf{x}_0) d\mathcal{H}^n<\infty,
	$$
	then
	$$
	\bar{\lambda}_{LY}^n[\Sigma]\leq \bar{\lambda}_{LY}^{n+2}[\Sigma\times \Real^2]\leq 2 \bar{\lambda}_{LY}^{n}[\Sigma]<\infty.
	$$
	As a consequence,  if $\Sigma\subset \Real^N$ is an $n$-dimensional compact submanifold, possibly with boundary, then, for all $m\in \mathbb{N}$, $m\geq 0$,
	$$
	\bar{\lambda}_{LY}^{n+2m}[\Sigma\times \Real^{2m}]\leq \bar{\lambda}_{LY}^{n+2m+2}[\Sigma\times \Real^{2m+2}] \leq \bar{\lambda}^\infty_{LY}[\Sigma].
	$$
	Moreover, either $\Sigma$ is contained in an affine $n$-dimensional plane or
	$$
	1< \lambda_{LY}^{n+2}[\Sigma\times \Real^2].
	$$
\end{prop}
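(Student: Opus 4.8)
The plan is to derive all three assertions from the first double inequality, whose core is an elementary first–variation identity for the Li--Yau weights.

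\emph{The sandwich $\bar{\lambda}_{LY}^n[\Sigma]\le\bar{\lambda}_{LY}^{n+2}[\Sigma\times\Real^2]\le 2\,\bar{\lambda}_{LY}^n[\Sigma]<\infty$.} First I would apply Lemma~\ref{IterateLem} with $M=n$, $m=1$ (so $C_{n,1}=\tfrac{4\pi}{n+1}$) together with $|\mathbb{S}^{n+2}|=\tfrac{2\pi}{n+1}|\mathbb{S}^n|$ to rewrite
$$
\bar{\lambda}_{LY}^{n+2}[\Sigma\times\Real^2]=\frac{2}{|\mathbb{S}^n|}\ \sup_{\rho>0,\ \mathbf{y}_0\in\Real^N}\ \frac1\rho\int_\Sigma \WW{n+1}{\rho}(\mathbf{y}-\mathbf{y}_0)\,d\mathcal{H}^n .
$$
The pointwise inequality $\tfrac1\rho\WW{n+1}{\rho}(\mathbf{x})=\rho^n\bigl(1+\tfrac14\rho^2|\mathbf{x}|^2\bigr)^{-n-1}\le\WW{n}{\rho}(\mathbf{x})$ bounds the supremum by $\lambda_{LY}^n[\Sigma]$, giving at once finiteness and the upper estimate. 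For the lower estimate I would use the hypothesis that $\lambda_{LY}^n[\Sigma]$ is realized: after translating $\Sigma$ the maximum is attained at some $(\rho_0,\mathbf{0})$, so $\rho\mapsto\int_\Sigma\WW{n}{\rho}(\mathbf{y})\,d\mathcal{H}^n$ has an interior critical point at $\rho_0$; differentiating under the integral (legitimate since the conformal volume is finite) and substituting $\tfrac14\rho_0^2|\mathbf{y}|^2=g_0-1$ with $g_0:=1+\tfrac14\rho_0^2|\mathbf{y}|^2$ reduces the stationarity condition to
$$
\int_\Sigma g_0^{-n-1}\,d\mathcal{H}^n=\tfrac12\int_\Sigma g_0^{-n}\,d\mathcal{H}^n=\tfrac12\rho_0^{-n}\lambda_{LY}^n[\Sigma].
$$
Feeding the test weight $\WW{n+2}{\rho_0}$ (centered anywhere) back into Lemma~\ref{IterateLem} then gives $\lambda_{LY}^{n+2}[\Sigma\times\Real^2]\ge\tfrac{2\pi}{n+1}\lambda_{LY}^n[\Sigma]$, i.e. $\bar{\lambda}_{LY}^{n+2}[\Sigma\times\Real^2]\ge\bar{\lambda}_{LY}^n[\Sigma]$.

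\emph{Monotonicity in $m$ for compact $\Sigma$.} The plan is to apply the sandwich to the $(n+2m)$-dimensional submanifold $\Sigma\times\Real^{2m}$; the only new point is that its conformal volume is finite and \emph{realized}. Finiteness follows by splitting the defining integral over $\{|\mathbf{y}-\mathbf{y}_0|<1\}$ and its complement and using compactness of $\Sigma$ and Lemma~\ref{IterateLem} to bound the objective by $\lambda_{LY}^n[\Sigma]+o(1)$. For realizedness I would separate cases. If $\Sigma$ lies in an affine $n$-plane, then $\Sigma\times\Real^{2m}$ is a region in an affine $(n+2m)$-plane, so $\bar{\lambda}_{LY}^{n+2m}[\Sigma\times\Real^{2m}]=1$ for all $m$ and the (trivial) monotonicity, together with $\bar{\lambda}_{LY}^\infty[\Sigma]=1$, is immediate. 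Otherwise $\Sigma\times\Real^{2m}$ is neither a round sphere (being noncompact) nor contained in an affine plane, so by the rigidity of \cite{bryantSurfacesConformalGeometry1988} one has $\bar{\lambda}_{LY}^{n+2m}[\Sigma\times\Real^{2m}]>1$, while a standard blow-down analysis shows that along any sequence of parameters $(\rho,\mathbf{y}_0)$ leaving every compact subset of $(0,\infty)\times\Real^N$ the normalized objective has $\limsup\le1$ (the value $1$ being approached only as $\rho\to\infty$ with $\mathbf{y}_0$ tending to an interior point of $\Sigma$, this being the usual ``blow-up at an interior point is a plane''). Hence maximizing sequences stay in a compact set and the supremum is attained. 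The sandwich then gives $\bar{\lambda}_{LY}^{n+2m}[\Sigma\times\Real^{2m}]\le\bar{\lambda}_{LY}^{n+2m+2}[\Sigma\times\Real^{2m+2}]<\infty$; since this sequence is nondecreasing, its $\liminf$ equals its supremum and dominates every term, which yields $\bar{\lambda}_{LY}^{n+2m+2}[\Sigma\times\Real^{2m+2}]\le\bar{\lambda}_{LY}^\infty[\Sigma]$.

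\emph{The dichotomy, and the main difficulty.} The last claim is the $m=1$ instance of the rigidity input: $\bar{\lambda}_{LY}^{n+2}[\Sigma\times\Real^2]=1$ forces, by \cite{bryantSurfacesConformalGeometry1988}, that $\Sigma\times\Real^2$ be an affine $(n+2)$-plane or a round sphere; the latter is impossible and the former puts $\Sigma$ inside an affine $n$-plane, so if $\Sigma$ is not contained in such a plane then $1<\bar{\lambda}_{LY}^{n+2}[\Sigma\times\Real^2]$, and a fortiori $1<\lambda_{LY}^{n+2}[\Sigma\times\Real^2]$. The step I expect to be the main obstacle is the realizedness of the conformal volume of $\Sigma\times\Real^{2m}$: one has to carry out the degeneration analysis carefully --- enumerating the limits of the objective as the Möbius parameter runs to the boundary of parameter space, and checking that the interior blow-up limit is exactly the planar value $1$ --- and combine it with the rigidity statement, with some extra care when $\partial\Sigma\neq\emptyset$.
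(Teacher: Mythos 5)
The proposal follows essentially the same route as the paper's own proof: the sandwich is derived from Lemma~\ref{IterateLem}, the pointwise estimate $\rho^{-1}\WW{n+1}{\rho}\le \WW{n}{\rho}$, and stationarity at the realizing $(\rho_0,\mathbf{x}_0)$; the compact case is bootstrapped by applying the sandwich to each $\Sigma\times\Real^{2m}$; and the dichotomy is read off from \cite{bryantSurfacesConformalGeometry1988}. You correctly identify realizedness of $\bar{\lambda}_{LY}^{n+2m}[\Sigma\times\Real^{2m}]$ as the main obstacle, and your blow-down sketch is the same compactness-exhaustion the paper carries out quantitatively (exhibiting $\rho_m^{\pm}$ and a compact $K_m$).

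One subcase in your realizedness argument is unaddressed at $m=0$: you rule out $\Sigma\times\Real^{2m}$ being a round sphere ``being noncompact,'' but for $m=0$ it is just $\Sigma$ itself, which is compact and may well be a round sphere (or, for $n=1$, a round circle) with $\bar{\lambda}_{LY}^n[\Sigma]=1$. In that case the blow-down limsup is also $1$ and your compactness-of-maximizers conclusion does not follow as stated. The paper absorbs this into its explicit case split: when $\bar{\lambda}_{LY}^{n+2m}[\Sigma_m]=1$, the needed inequality $\bar{\lambda}_{LY}^{n+2m}[\Sigma_m]\le\bar{\lambda}_{LY}^{n+2m+2}[\Sigma_{m+1}]$ is just $1\le\bar{\lambda}_{LY}^{n+2m+2}[\Sigma_{m+1}]$, which is the Li--Yau lower bound and requires no realizedness at that level. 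You should insert that observation so the induction closes at the base step; the rest of your argument then matches the paper's.
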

\begin{rem}
	The affine $n$-dimensional plane $\Real^n \subset \Real^N$ satisfies the hypotheses of the proposition and one computes that
	$$
	1=\bar{\lambda}_{LY}^n[\Real^n]=\bar{\lambda}_{LY}^{n+2}[\Real^{n}\times \Real^2]=\bar{\lambda}_{LY}^{n+2}[\Real^{n+2}].
	$$
	In contrast, round spheres,  $\mathbb{S}^n\subset \Real^N$, give
	$$
	1=\bar{\lambda}_{LY}^n[\mathbb{S}^n]<\bar{\lambda}_{LY}^{n+2}[\mathbb{S}^n\times \Real^2].
	$$
	Here the second inequality is strict by \cite{bryantSurfacesConformalGeometry1988}.  See Appendix \ref{LoxodromeSec} where examples of equality are obtained for non-flat curves.
\end{rem}
\begin{proof}
	One computes, for fixed $\mathbf{x}_0\in \Real^N$ and $\rho>0$,
	\begin{align*}
		\frac{\partial}{\partial\rho}\WW{ M}{ \rho}(\mathbf{x}-\mathbf{x}_0)&= \frac{M \rho^{M-1}}{(1+\frac{\rho^2}{4}|\mathbf{x}-\mathbf{x}_0|^2)^M}-\frac{\frac{1}{2} M \rho^{M+1} |\mathbf{x}-\mathbf{x}_0|^2}{(1+\frac{\rho^2}{4}|\mathbf{x}-\mathbf{x}_0|^2)^{M+1}}\\
		&=-\frac{M}{\rho} \WW{M}{ \rho}(\mathbf{x}-\mathbf{x}_0)+\frac{2M}{\rho^2} \WW{ M+1}{ \rho} (\mathbf{x}-\mathbf{x}_0).
	\end{align*}
Using the pointwise estimate
\begin{equation}\label{WeightEstEqn}
	\WW{ M+1}{ \rho}(\mathbf{x}-\mathbf{x}_0)\leq \rho (1+\frac{\rho^2}{4}|\mathbf{x}-\mathbf{x}_0|^2)^{-1} \WW{ M}{ \rho}(\mathbf{x}- \mathbf{x}_0) \leq \rho \WW{ M}{\rho}(\mathbf{x}- \mathbf{x}_0),
\end{equation} 
one sees that if $\int_\Sigma\WW{ M}{ \rho}(\mathbf{x}-\mathbf{x}_0)d\mathcal{H}^n<\infty,$
 then differentiation by $\rho$ under the integral sign is justified.  In this case,
		\begin{align*}
		\frac{\partial}{\partial\rho} \int_{\Sigma} \WW{ M}{ \rho}(\mathbf{x}-\mathbf{x}_0) d\mathcal{H}^n=\int_{\Sigma} \frac{2M}{\rho^2}\WW{ M+1}{ \rho}(\mathbf{x}-\mathbf{x}_0)-\frac{M}{\rho }  \WW{ M}{ \rho}(\mathbf{x}-\mathbf{x}_0) d\mathcal{H}^n.
	\end{align*}
Specializing to the case $M=n$, if 
	$$
	\lambda_{LY}^n[\Sigma]=\int_\Sigma \WW{ n}{ \rho_0}(\mathbf{x}-\mathbf{x}_0) d\mathcal{H}^n<\infty,
	$$
	then there is finite maximum at $\rho=\rho_0$ and so
	$$
	0 =\frac{2n}{\rho^2_0}\int_{\Sigma} \WW{ n+1}{\rho_0}(\mathbf{x}-\mathbf{x}_0) d\mathcal{H}^n -\frac{n}{\rho_0 } \int_{\Sigma} \WW{ n}{ \rho_0}(\mathbf{x}-\mathbf{x}_0) d\mathcal{H}^n.
	$$
	It follows from Lemma \ref{IterateLem} that, with $\mathbf{x}_0=(\mathbf{y}_0,\mathbf{0})\in \Real^{N+2}$, 
	\begin{align*}
		\lambda_{LY}^n[\Sigma]&=\int_\Sigma \WW{ n}{\rho_0}(\mathbf{y}-\mathbf{y}_0) d\mathcal{H}^n=\frac{2}{\rho_0} \int_{\Sigma} \WW{n+1}{\rho_0}(\mathbf{y}-\mathbf{y}_0) d\mathcal{H}^n\\
		&= \frac{n+1}{2\pi}  \int_{\Sigma\times \Real^2} \WW{ n+2}{ \rho_0} (\mathbf{x}-\mathbf{x}_0) d\mathcal{H}^{n+2}\leq  \frac{n+1}{2\pi} \lambda_{LY}^{n+2} [\Sigma\times \Real^2].
\end{align*}
	As $|\mathbb{S}^{n+2}|=\frac{2\pi}{n+1} |\mathbb{S}^n|$, it follows that
	$$
	\bar{\lambda}_{LY}^n[\Sigma]\leq  \bar{\lambda}_{LY}^{n+2} [\Sigma\times \Real^2].
	$$
Likewise,  \eqref{WeightEstEqn} and Lemma \ref{IterateLem} imply that when $\mathbf{x}_0=(\mathbf{y}_0, \mathbf{z}_0)\in \Real^N\times \Real^2$, 
	\begin{align*}
		\int_{\Sigma\times \Real^2} \WW{ n+2}{\rho}(\mathbf{x}-\mathbf{x}_0)d\mathcal{H}^{n+2}&= \frac{4\pi}{n+1} \rho^{-1} \int_{\Sigma}\WW{ n+1}{ \rho}(\mathbf{y}-\mathbf{y}_0)d\mathcal{H}^{n}\\
		&\leq \frac{4\pi}{n+1} \int_{\Sigma} \WW{ n}{\rho}(\mathbf{y}- \mathbf{y}_0) d\mathcal{H}^n \leq \frac{4\pi}{n+1}\lambda_{LY}^n[\Sigma]
	\end{align*}
	Taking the sup over $\rho>0$ and $\mathbf{x}_0\in \Real^{N}\times \Real^2$ yields
	$$
	\lambda_{LY}^{n+2}[\Sigma\times \Real^2]\leq \frac{4\pi}{n+1} \lambda_{LY}^n[\Sigma]
	$$
	Which gives claimed bound between the normalized conformal volumes.
	
To complete the proof let $\Sigma_m=\Sigma\times \Real^{2m}\subset \Real^{N+2m}$, $n+2m\geq 1$. We claim exactly one of the following situations holds:
	\begin{enumerate}
			\item $\Sigma$ contained in an affine $n$-plane  and, for all $m\geq 0$, $\bar{\lambda}_{LY}^{n+2m}[\Sigma_m]=\bar{\lambda}_{LY}[\Sigma]=1$;
			\item $n=1$ and,  for all $m\geq 1$, $\bar{\lambda}_{LY}^{1+2m}[\Sigma_m]>\bar{\lambda}_{LY}^1[\Sigma_0]=\bar{\lambda}_{LY}^1[\Sigma]=1$; 
		\item For all $m\geq 0$, $\bar{\lambda}_{LY}^{n+2m}[\Sigma_m]>1$. 
\end{enumerate}
	First of all, by \cite{bryantSurfacesConformalGeometry1988}, one has for all $m\geq 0$,
	$$
	\lambda_{LY}^{n+2m}[\Sigma_m]\geq |\mathbb{S}^{n+2m}| \Rightarrow \bar{\lambda}_{LY}^{n+2m}[\Sigma_m]\geq 1.
	$$
	Moreover, when $n+2m> 1$,  the inequality is strict at a fixed $m$ unless $\Sigma_{m}$ is contained in a $(n+2m)$-dimensional  affine plane. In the latter situation,   $\Sigma$ is contained in an affine $n$-dimensional plane which then implies the same for all $\Sigma_{m}$, $m\geq 0$.   That is, case (1) holds.
	When $n=1$ and $m=0$, it may happen that $\bar{\lambda}_{LY}^1[\Sigma]=1$ and $\Sigma$ is not contained in an affine line.  In this case, the same reasoning implies that, for $m\geq 1$, $\lambda_{LY}^{1+2m}[\Sigma_m]>1=\bar{\lambda}_{LY}^1[\Sigma]$ and we are in case (2).  Any other situation is covered by case (3), establishing the claim.
	
	It is immediate that the remaining conclusions of the proposition hold when case (1) holds for all $n$ and $m$ and they hold in case (2) when $m=0$. Hence, it suffices to suppose $\bar{\lambda}_{LY}^{n+2m}[\Sigma_m]>1$.
Using what has already been established,  we need only show that this condition and fact that $\Sigma$ is compact ensures $\bar{\lambda}_{LY}^{n+2m}[\Sigma_m]$ is realized. 
 
For any fixed $\mathbf{x}_0\in \Real^N\times \Real^{2m}=\Real^{N+2m}$ write $\mathbf{x}_0=(\mathbf{y}_0, \mathbf{z}_0)$. 
	By Lemma \ref{IterateLem}, 
	\begin{align*}
		\int_{\Sigma_m} \WW{ n+2m}{ \rho}(\mathbf{x}-\mathbf{x}_0)d\mathcal{H}^{n+2m}
		&=\frac{C_{n, m}}{\rho^m}\int_{\Sigma} \WW{ n+m}{ \rho}(\mathbf{y}-\mathbf{y}_0) d\mathcal{H}^{n}\leq C_{n, m} |\Sigma |\rho^n<\infty.
	\end{align*}
 Where we also used that $\Sigma$ is compact and of finite area and \eqref{WeightEstEqn} to estimate 
	$$
	\rho^{-m}  \WW{ n+m}{\rho}(\mathbf{y}-\mathbf{y}_0) \leq \WW{ n} {\rho}(\mathbf{y}-\mathbf{y}_0)\leq \rho^n.
	$$
	Thus, there is a uniform $\rho_m^->0$ so, for $0<\rho<\rho_m^-$, 
	\begin{equation*}
	|\mathbb{S}^{n+2m}|^{-1}	\int_{\Sigma_m} \WW{ n+2m}{\rho}(\mathbf{x}-\mathbf{x}_0)d\mathcal{H}^{n+2m}\leq \frac{3}{4}<\bar{\lambda}_{LY}^{n+2m}[\Sigma_m].
	\end{equation*}
	When
\begin{equation}\label{NotCloseEqn}
	\inf_{\mathbf{y}\in \Sigma} |\mathbf{y}-\mathbf{y}_0|> 4,
\end{equation}
iterating \eqref{WeightEstEqn} and combining with Lemma \ref{IterateLem} yields
	$$
	\int_{\Sigma_m} \WW{ n+2m}{\rho}(\mathbf{x}-\mathbf{x}_0)d\mathcal{H}^{n+2m}\leq \frac{C_{n, m}}{\rho^{m}}	\int_{\Sigma} \WW{ n+m}{\rho}(\mathbf{y}-\mathbf{y}_0)d\mathcal{H}^{n}\leq \frac{C_{n, m}}{(1+\rho^2)^{m}} \lambda_{LY}^{n}[\Sigma]
	$$
	and so there is a $\rho_m^+$ so for $\rho>\rho_m^+$ and $\mathbf{y}_0$ not close to $\Sigma$ in sense of \eqref{NotCloseEqn}
	$$
	|\mathbb{S}^{n+2m}|^{-1}\int_{\Sigma_m} \WW{ n+2m}{\rho}(\mathbf{x}-\mathbf{x}_0)d\mathcal{H}^{n+2m}\leq \frac{3}{4}<\bar{\lambda}_{LY}^{n+2m}[\Sigma_m].
	$$
	Finally, as $\Sigma$ is compact, the set of $\mathbf{y}_0$  not satisfying \eqref{NotCloseEqn}, i.e., so $\inf_{\mathbf{y}\in \Sigma} |\mathbf{y}-\mathbf{y}_0|\leq 4$, 
	is compact and so it follows from the asymptotic expansion of \cite{bryantSurfacesConformalGeometry1988} and $\bar{\lambda}_{LY}^{n+2m}[\Sigma']>1$ that, after possibly increasing $\rho_m^+$,  when $\rho>\rho_m^+$ and $\mathbf{y}_0$ does not satisfy \eqref{NotCloseEqn}
	$$
	|\mathbb{S}^{2n+m}|^{-1}\int_{\Sigma_m} \WW{ n+2m}{ \rho}(\mathbf{x}-\mathbf{x}_0)d\mathcal{H}^{n+2m}\leq \frac{1}{2}(1+\bar{\lambda}_{LY}^{n+2m}[\Sigma_m])<\bar{\lambda}_{LY}^{n+2m}[\Sigma_m].
	$$
Thus, by the translation invariance in the $\mathbb{R}^{2m}$ factor,  there is a compact $K_m\subset \Real^{N}$ so
	$$
	\lambda_{LY}^{n+2m}[\Sigma_m]=\sup_{\rho_m^-\leq \rho \leq \rho_m^+,\mathbf{y}_0\in K_m} \int_{\Sigma_m}\WW{ n+2m}{ \rho}(\mathbf{x}-(\mathbf{y}_0, \mathbf{0}) ) d\mathcal{H}^{n+2m}.
	$$
	As this is the supremum of a continuous function over a compact set, it is a maximum and so the conformal volume is achieved.  This completes the proof.
\end{proof}
We can now prove Proposition \ref{StabConfVolMonDimProp}
\begin{proof}[Proof of Proposition \ref{StabConfVolMonDimProp}]
	When $\bar{\lambda}_{LY}^n[\Sigma]<\infty$, for any $\epsilon>0$ there are $\mathbf{x}_0, \rho_0$ so
	$$
\infty>\lambda_{LY}^n[\Sigma]\geq 	\int_{\Sigma} \WW{ n}{\rho_0} (\mathbf{x}-\mathbf{x}_0) d\mathcal{H}^n \geq \lambda_{LY}[\Sigma]-\epsilon.
	$$
Pick $R_\epsilon>0$ large enough so that $\Sigma_\epsilon =\bar{B}_{R_\epsilon}\cap \Sigma$ satisfies
	$$
	\lambda_{LY}^n[\Sigma_\epsilon]\geq  \int_{\Sigma_\epsilon}  \WW{ n}{\rho_0}(\mathbf{x}-\mathbf{x}_0) d\mathcal{H}^n\geq \lambda_{LY}^n[\Sigma]-2\epsilon.
	$$
	By slightly increasing $R_{\epsilon}$ we may also assume $\Sigma_\epsilon$ is a compact manifold with boundary.
	As $\Sigma_\epsilon$ is compact, we can apply Proposition \ref{StableLYIterateProp} to obtain that
	$$
	\bar{\lambda}_{LY}^{n+2m}[\Sigma_\epsilon\times \Real^{2m}]\geq \bar{\lambda}_{LY}^n[\Sigma_\epsilon]\geq \bar{\lambda}_{LY}^n[\Sigma]-2|\mathbb{S}^n|^{-1}\epsilon.
	$$
	As $\Sigma_\epsilon\times \Real^{2m}\subset \Sigma \times \Real^{2m}$, 
	$$
	\bar{\lambda}_{LY}^{n+2m}[\Sigma \times \Real^{2m}]\geq  \bar{\lambda}_{LY}^{n+2m}[\Sigma_\epsilon\times \Real^{2m}]\geq\bar{\lambda}_{LY}^n[\Sigma]-2|\mathbb{S}^n|^{-1}\epsilon.
	$$
	The choice of $\epsilon>0$ was arbitrary so we conclude that
	$$
	\bar{\lambda}_{LY}^{n+2m}[\Sigma \times \Real^{2m}]\geq \bar{\lambda}_{LY}^n[\Sigma].
	$$
	
	When $\bar{\lambda}_{LY}^n[\Sigma]=\infty$, for any $\Lambda>0$ large there are $\mathbf{x}_0$ and $\rho_0$ so
	$$
	\int_{\Sigma} \WW{ n}{\rho}(\mathbf{x}-\mathbf{x}_0)\geq 2\Lambda.
	$$
	Now pick $R_\Lambda>0$ large enough so that  $\Sigma_{\Lambda}=B_{R_\Lambda}\cap \Sigma$ satisfies
	$$
	\lambda_{LY}^n[\Sigma_\Lambda]\geq  \int_{\Sigma_\Lambda}  \WW{ n}{\rho_0} (\mathbf{x}-\mathbf{x}_0) d\mathcal{H}^n\geq \Lambda.
	$$
	We may again increase $R_\Lambda$ slightly to ensure $\Sigma_\Lambda$ is a compact manifold with boundary. Proposition \ref{StableLYIterateProp} implies
	$$
	\bar{\lambda}_{LY}^{n+2m} [\Sigma\times \Real^{2m}]\geq  \bar{\lambda}_{LY}^{n+2m} [\Sigma_\Lambda\times \Real^{2m}]\geq \bar{\lambda}_{LY}^{n} [\Sigma_\Lambda]\geq |\mathbb{S}^n|^{-1} \Lambda.
	$$
	As $\Lambda$ is arbitrary, we conclude
	$$
	\bar{\lambda}_{LY}^{n+2m} [\Sigma\times \Real^{2m}]=\infty.
	$$
\end{proof}

We will show,  that in general,
$$
\lambda_{CM}^n[\Sigma]\leq \bar{\lambda}^\infty_{LY}[\Sigma]\leq\lambda_{V}^n[\Sigma].
$$
\begin{prop} \label{ConfVolCMentProp}
One has
$$
(4\pi \rho)^{-\frac{n}{2}} e^{-\frac{|\mathbf{x}|^2}{4 \rho}} =\lim_{m\to \infty}\hat{W}_{n+m, \rho}^n(\mathbf{x})\leq 	\hat{W}_{n+m+1, \rho}^n(\mathbf{x})\leq \hat{W}_{n+m, \rho}^n(\mathbf{x}) \leq \hat{W}_{n, \rho}^n(\mathbf{x}).	$$
Likewise,
$$
\left(\frac{4\pi}{n}\right)^{\frac{n}{2}} |\mathbb{S}^n|^{-1}=\hat{C}_{n,0}< \hat{C}_{n, m}<\hat{C}_{n, m+1}< \lim_{m\to \infty} \hat{C}_{n, m}=1.
$$
As a consequence, for an $n$-dimensional submanifold, $\Sigma \subset \Real^N$, 
$$
\hat{C}_{n, m} \lambda_{CM}^n[\Sigma]\leq \bar{\lambda}_{LY}^{n+2m}[\Sigma\times \Real^{2m}];
$$
$$
	\lambda_{CM}^n[\Sigma]\leq \bar{\lambda}_{LY}^\infty[\Sigma].
$$
\end{prop}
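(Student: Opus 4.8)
The plan is to reduce everything to a single elementary convexity estimate and then integrate.

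\emph{Step 1 (pointwise inequalities).} Fix $\mathbf{x}$ and $\rho$ and set $a=|\mathbf{x}|^2/(4\rho)\ge 0$, so that $\hat{W}^n_{M,\rho}(\mathbf{x})=(4\pi\rho)^{-n/2}\phi_a(M)$ with $\phi_a(t)=(1+a/t)^{-t}$. It suffices to show that $t\mapsto\phi_a(t)$ is nonincreasing on $(0,\infty)$, strictly decreasing when $a>0$, with $\lim_{t\to\infty}\phi_a(t)=e^{-a}$; being decreasing, $e^{-a}$ is then its infimum. Writing $-\log\phi_a(t)=t\log(1+a/t)$ and differentiating gives $\log(1+a/t)-\tfrac{a/t}{1+a/t}$, which is $\ge 0$ since $\log(1+u)\ge u/(1+u)$ for $u\ge 0$, with equality only at $u=0$. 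Evaluating at $M=n$, $M=n+m$, $M=n+m+1$ and passing to the limit yields the stated chain of pointwise inequalities, with the Gaussian as the pointwise infimum.

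\emph{Step 2 (the constants $\hat{C}_{n,m}$).} I would identify $\hat{C}_{n,m}$ with the reciprocal of a fixed integral. Since the affine plane $\Real^n\subset\Real^N$ satisfies $\bar{\lambda}_{LY}^{n+2m}[\Real^n\times\Real^{2m}]=1$, Lemma~\ref{IterateLem} gives $1=\hat{C}_{n,m}\,I_{n,m}$ where $I_{n,m}=\int_{\Real^n}\hat{W}^n_{n+m,\rho}(\mathbf{y})\,d\mathcal{H}^n$; the change of variables $\mathbf{y}\mapsto\sqrt{\rho}\,\mathbf{y}$ shows $I_{n,m}$ is independent of $\rho$, and it is finite because $\hat{W}^n_{n+m,\rho}$ decays like $|\mathbf{y}|^{-2(n+m)}$ with $2(n+m)>n$. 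Integrating the strict pointwise inequalities of Step 1 over $\Real^n$ shows $I_{n,m}$ is strictly decreasing in $m$, so $\hat{C}_{n,m}=1/I_{n,m}$ is strictly increasing; dominated convergence (with dominating function $\hat{W}^n_{n,\rho}\in L^1(\Real^n)$) gives $I_{n,m}\downarrow\int_{\Real^n}(4\pi\rho)^{-n/2}e^{-|\mathbf{y}|^2/(4\rho)}\,d\mathcal{H}^n=1$, hence $\hat{C}_{n,m}\uparrow 1$ and in particular $\hat{C}_{n,m}<1$ for every $m$. Finally, the substitution $\mathbf{y}=2(n\rho)^{1/2}\mathbf{v}$ gives $I_{n,0}=\left(\tfrac{n}{\pi}\right)^{n/2}\int_{\Real^n}(1+|\mathbf{v}|^2)^{-n}\,d\mathcal{H}^n$, and since $\int_{\Real^n}(1+\tfrac14|\mathbf{x}|^2)^{-n}\,d\mathcal{H}^n=\int_{\Real^n}\WW{n}{1}\,d\mathcal{H}^n=|\mathbb{S}^n|$ (recalled in the introduction) one finds $I_{n,0}=\left(\tfrac{n}{4\pi}\right)^{n/2}|\mathbb{S}^n|$, hence $\hat{C}_{n,0}=\left(\tfrac{4\pi}{n}\right)^{n/2}|\mathbb{S}^n|^{-1}$.

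\emph{Step 3 (consequences).} For each $\rho>0$ and $\mathbf{x}_0\in\Real^N$, integrating the pointwise bound $(4\pi\rho)^{-n/2}e^{-|\mathbf{x}-\mathbf{x}_0|^2/(4\rho)}\le\hat{W}^n_{n+m,\rho}(\mathbf{x}-\mathbf{x}_0)$ over $\Sigma$ and then taking the supremum over $(\rho,\mathbf{x}_0)$ on both sides gives, via the formula of Lemma~\ref{IterateLem} for $\bar{\lambda}_{LY}^{n+2m}[\Sigma\times\Real^{2m}]$, the inequality $\hat{C}_{n,m}\,\lambda_{CM}^n[\Sigma]\le\bar{\lambda}_{LY}^{n+2m}[\Sigma\times\Real^{2m}]$; here the two suprema range over the same set $\{\rho>0,\ \mathbf{x}_0\in\Real^N\}$, so the comparison passes to the suprema. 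Letting $m\to\infty$, using $\hat{C}_{n,m}\to1$ and Proposition~\ref{StabConfVolMonDimProp} (which identifies $\lim_{m\to\infty}\bar{\lambda}_{LY}^{n+2m}[\Sigma\times\Real^{2m}]$ with $\bar{\lambda}_{LY}^\infty[\Sigma]$, finite or not), yields $\lambda_{CM}^n[\Sigma]\le\bar{\lambda}_{LY}^\infty[\Sigma]$.

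\emph{Main obstacle.} No step is genuinely hard; the points requiring care are the elementary inequality $\log(1+u)\ge u/(1+u)$, which underlies \emph{both} the pointwise and the constant monotonicities, and the bookkeeping of normalizations — in particular verifying $\hat{C}_{n,m}=1/I_{n,m}$ and that the base-point suprema defining $\lambda_{CM}^n$ and (through Lemma~\ref{IterateLem}) $\bar{\lambda}_{LY}^{n+2m}$ are over the same set, so that the inequality lifts cleanly to the suprema.
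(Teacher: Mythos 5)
Your proof is correct and follows essentially the same route as the paper: reduce the pointwise inequalities to monotonicity of a one-variable function, identify $\hat{C}_{n,m}$ via Lemma~\ref{IterateLem} applied to $\Sigma=\Real^n$, and pass to suprema. The only difference is cosmetic but nice: you verify $I_t'(s)\le 0$ directly from the inequality $\log(1+u)\ge u/(1+u)$, whereas the paper computes $I_t''$ and uses convexity together with $\lim_{s\to\infty}I_t'(s)=0$; both work, yours is a line shorter.
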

\begin{proof}
	Observe that $\hat{W}^n_{n+m+1, \rho}\leq \hat{W}^n_{n+m, \rho}$ if and only if for $t\geq 0$,
	$$
	-(n+m+1) \log \left(1+\frac{t}{n+m+1} \right)\leq -(n+m) \log (1+\frac{t}{n+m}).
	$$
	Fix $t\geq 0$ and consider the function
	$$
 I_t(s)= -s \log (1+\frac{t}{s}) , s>0.
	$$
	It will suffice to show this is a non-increasing function for $s\geq 1$.
	One has 
	$$I_t'(s)= -\log (1+\frac{t}{s}) +\frac{t}{s}  (1+\frac{t}{s})^{-1}= 1 -\log (1+\frac{t}{s}) - (1+\frac{t}{s})^{-1},$$
	$$
	I_t''(s)= \frac{t}{s^2}  (1+\frac{t}{s})^{-1} -\frac{t}{s^2}  (1+\frac{t}{s})^{-2}= \frac{t^2}{s^3}  (1+\frac{t}{s})^{-2} \geq 0.
	$$
	As $\lim_{s\to \infty} I_t'(s)=0$, it follows that $I_t'(s)\leq 0$ for $s\geq 1$ and $t\geq 0$.  That is,  $I_t(s)$ is non-increasing as claimed.
	
	For fixed $\mathbf{x}\in \Real^N$,  one has the elementary limit
	$$
	\lim_{m\to \infty}  (1+\frac{1}{4(n+m) \rho} |\mathbf{x}|^2)^{-n-m}= e^{-\frac{|\mathbf{x}|^2}{4\rho}}.
	$$
	Hence, the $\hat{W}^n_{n+m, \rho}$ decrease pointwise in $m$ to
	$$
	(4\pi \rho)^{-\frac{n}{2}} e^{-\frac{|\mathbf{x}|^2}{4 \rho}}=\lim_{m\to \infty}\hat{W}_{n+m, \rho}^n(\mathbf{x}).
	$$
	
	With $\Sigma=\Real^n$ the $n$-dimensional affine plane through the origin,  Lemma \ref{IterateLem} yields:
	$$
1=\bar{\lambda}_{LY}^{n+2m} [\Real^n \times \Real^{2m}]= \hat{C}_{n, m} \int_{\Real^n} \hat{W}_{n+m, 1}^n d\mathcal{H}^n=\hat{C}_{n, m}\int_{\Real^n} \hat{W}_{n+m, 1}^n  d\mathcal{L}^n.
$$
As the $\hat{W}_{n+m,1}^n$ are decreasing pointwise in $m$, the $\hat{C}_{n,m}$ are increasing in $m$.  Moreover,
$$
1=\int_{\Real^n} 	(4\pi )^{-\frac{n}{2}} e^{-\frac{|\mathbf{x}|^2}{4 }} d\mathcal{L}^n=\lim_{m\to \infty} \int_{\Real^n} \hat{W}^n_{n+m,1} d\mathcal{L}^n
$$
by the dominated convergence theorem and so $\lim_{m\to \infty} \hat{C}_{n,m}=1$.

Finally, by Lemma \ref{IterateLem} we see that
	\begin{align*}
	\hat{C}_{n,m}	F_{\mathbf{x}_0,t_0}[\Sigma] &= \hat{C}_{n,m}\int_{\Sigma} (4\pi t_0)^{-\frac{n}{2}} e^{-\frac{|\mathbf{x}-\mathbf{x}_0|^2}{4t_0}} d\mathcal{H}^n\\
		&\leq 	\hat{C}_{n,m}\int_{\Sigma} \hat{W}_{n+m, t_0}^n(\mathbf{x}-\mathbf{x}_0) d\mathcal{H}^{n}= \bar{\lambda}_{LY}^{n+2m} [\Sigma \times \Real^{2m}].
	\end{align*}
	By taking the supremum over $\mathbf{x}_0$ and $t_0$ we obtain the first estimate.  The second follows by taking $m\to \infty$.
	
\end{proof}
We record an elementary computation:
\begin{lem}\label{LogConvexComputeLem}
	For $M\geq 0$, 
	\begin{align*}
		\nabla^2_\Real \log \WW{ M}{\rho} & = -\frac{1}{2} M \rho^2 g_{\Real}+\frac{M}{8}  \frac{ \rho^4|\mathbf{x}|^2}{1+\frac{\rho^2}{4} |\mathbf{x}|^2} g_\Real +\frac{M}{16} \frac{\rho^4 d|\mathbf{x}|^2\otimes d |\mathbf{x}|^2 }{(1+\frac{\rho^2}{4} |\mathbf{x}|^2)^2}\geq -\frac{1}{2} M \rho^2 g_{\Real}
	\end{align*}
	with equality at $\mathbf{x}=\mathbf{0}$.
	In particular,
	$$
	\tau(\WW{ M}{ \rho})=\frac{1}{M \rho^2} \mbox{ and } \tau(\hat{W}_{n+m, \rho}^n )=\rho.
	$$
\end{lem}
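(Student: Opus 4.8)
The lemma is a direct computation, so the plan is to carry it out and then read off the consequences for the virtual time. Throughout one may assume $M>0$, the case $M=0$ being trivial. I would set $\phi(\mathbf{x})=1+\frac{1}{4}\rho^2|\mathbf{x}|^2$, so that $\log\WW{M}{\rho}=M\log\rho-M\log\phi$. Using $\nabla_\Real|\mathbf{x}|^2=2\mathbf{x}$ and $\nabla^2_\Real|\mathbf{x}|^2=2g_\Real$ gives $\nabla_\Real\phi=\frac{1}{4}\rho^2\nabla_\Real|\mathbf{x}|^2$ and $\nabla^2_\Real\phi=\frac{1}{2}\rho^2 g_\Real$, whence the chain rule $\nabla^2_\Real\log\phi=\phi^{-1}\nabla^2_\Real\phi-\phi^{-2}\,\nabla_\Real\phi\otimes\nabla_\Real\phi$ yields, after identifying $\nabla_\Real|\mathbf{x}|^2$ with $d|\mathbf{x}|^2$,
$$
\nabla^2_\Real\log\WW{M}{\rho}=-\frac{M\rho^2}{2\phi}\,g_\Real+\frac{M\rho^4}{16\phi^2}\,d|\mathbf{x}|^2\otimes d|\mathbf{x}|^2 .
$$
Splitting the first term with the algebraic identity $\phi^{-1}=1-\frac{1}{4}\rho^2|\mathbf{x}|^2\,\phi^{-1}$ then produces exactly the three terms in the asserted formula.

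For the inequality, I would observe that when $M>0$ both correction terms are positive semi-definite: $\frac{M}{8}\frac{\rho^4|\mathbf{x}|^2}{\phi}g_\Real$ is a non-negative multiple of $g_\Real$, and $d|\mathbf{x}|^2\otimes d|\mathbf{x}|^2$ is the square of a one-form, hence positive semi-definite. Therefore $\nabla^2_\Real\log\WW{M}{\rho}\geq-\frac{1}{2}M\rho^2 g_\Real$ pointwise, and both corrections vanish at $\mathbf{x}=\mathbf{0}$, which gives equality there.

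To compute $\tau(\WW{M}{\rho})$ I would combine these two facts. The pointwise lower bound gives, for every $0\leq\tau\leq(M\rho^2)^{-1}$, that $2\tau\,\nabla^2_\Real\log\WW{M}{\rho}+g_\Real\geq(1-\tau M\rho^2)g_\Real\geq 0$; conversely, at the origin $2\tau\,\nabla^2_\Real\log\WW{M}{\rho}(\mathbf{0})+g_\Real=(1-\tau M\rho^2)g_\Real$ exactly, which fails to be positive semi-definite once $\tau>(M\rho^2)^{-1}$. Hence $\tau(\WW{M}{\rho})=(M\rho^2)^{-1}$. Since $\hat{W}^n_{n+m,\rho}$ equals $\WW{n+m}{((n+m)\rho)^{-1/2}}$ multiplied by a positive constant independent of $\mathbf{x}$, the two functions have identical Hessians of their logarithms and hence the same virtual time, which evaluates to $\big((n+m)\cdot((n+m)\rho)^{-1}\big)^{-1}=\rho$.

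The argument is elementary and there is no serious obstacle; the only point that needs care is the sharpness of the virtual time, where one must verify that the two correction tensors genuinely vanish at $\mathbf{x}=\mathbf{0}$. It is precisely this equality case that upgrades the pointwise lower bound on $\nabla^2_\Real\log\WW{M}{\rho}$ to the exact value of $\tau$ rather than a mere lower bound for it.
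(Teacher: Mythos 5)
Your proof is correct and follows essentially the same route as the paper: a direct computation of the gradient and Hessian of $\log \WW{M}{\rho}$, observation that the residual tensors are positive semi-definite and vanish at the origin, and then reading off $\tau$ via the rescaling $\hat{W}^n_{n+m,\rho}=\mathrm{const}\cdot\WW{n+m}{((n+m)\rho)^{-1/2}}$. The paper simply states the gradient and Hessian and leaves the remaining steps as "straightforward to verify," which is precisely what you have spelled out.
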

\begin{proof}
	One directly computes
	$$	\nabla_\Real \log \WW{ M}{ \rho}(\mathbf{x})= -\frac{M \rho^2}{2} \frac{\mathbf{x}}{1+\frac{\rho^2}{4}|\mathbf{x}|^2}; 
	$$
	$$
	\nabla_\Real^2 \log \WW{ M}{ \rho} (\mathbf{x})= -\frac{M \rho^2}{2} \frac{g_{\Real}}{1+\frac{\rho^2}{4}|\mathbf{x}|^2}+\frac{M}{16} \frac{\rho^4 d|\mathbf{x}|^2\otimes d |\mathbf{x}|^2 }{(1+\frac{\rho^2}{4} |\mathbf{x}|^2)^2}.
	$$	
	It is straightforward to verify the claims from this.
\end{proof}

\begin{prop}\label{WeightInVTProp}
For $n,m, N\in \mathbb{N}$ with $m\geq N-n$,
$$
\hat{C}_{N, n-N+m} ( 4\pi \rho)^{\frac{n-N}{2}}\hat{W}_{n+m, \rho}^n \in \mathcal{VT}_1^+(\rho).
$$
In particular, if $\Sigma\subset \Real^N$ is a $n$-dimensional proper submanifold and $m\geq N=n$, then
$$
\mathcal{\lambda}_{V}^n[\Sigma]\geq \hat{C}_{N, n-N+m}  \hat{C}_{n,m}^{-1} \bar{\lambda}_{LY}^{n+2m}[\Sigma\times \Real^{2m}]
$$
$$
\mathcal{\lambda}_{V}^n[\Sigma]\geq\bar{\lambda}^\infty_{LY}[\Sigma].
$$
\end{prop}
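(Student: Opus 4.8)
The plan is to verify that $u_\rho := \hat C_{N,n-N+m}(4\pi\rho)^{\frac{n-N}{2}}\hat W_{n+m,\rho}^n$ satisfies the three defining conditions of $\mathcal{VT}_1^+(\rho)$, and then to feed this family — together with its Euclidean translates — into the definition of $\lambda_V^n$, using the integration identities of Lemma~\ref{IterateLem} to identify the resulting supremum.

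For membership in $\mathcal{VT}_1^+(\rho)$: positivity and $C^\infty$ regularity are immediate from the explicit formula for $\hat W_{n+m,\rho}^n$, and the hypothesis $m\geq N-n$ forces $2(n+m)\geq 2N>N$, so the weight lies in $L^1(\Real^N)$. The virtual-time condition is the one structural point: since $\tau(\cdot)$ depends only on $\nabla^2_\Real\log(\cdot)$, it is unchanged under multiplication by a positive constant, so $\tau(u_\rho)=\tau(\hat W_{n+m,\rho}^n)=\rho$ by Lemma~\ref{LogConvexComputeLem}, which gives $\tau(u_\rho)\geq\rho$. For the unit-mass condition I would first evaluate $\int_{\Real^N}\hat W_{n+m,\rho}^n\,d\mathcal{L}^N$: writing $\hat W_{n+m,\rho}^n=(4\pi\rho)^{\frac{N-n}{2}}\hat W_{n+m,\rho}^N$ and applying Lemma~\ref{IterateLem} with $\Sigma=\Real^N$ (so that the submanifold dimension there is $N$) and stabilization parameter $m':=n-N+m\geq 0$, so that $N+m'=n+m$, the left-hand side is the normalized conformal volume of the affine plane $\Real^{N+2m'}$, which equals $1$; since the relevant integral is independent of $\rho$ and of the translation parameter by scaling, this yields $\int_{\Real^N}\hat W_{n+m,\rho}^N\,d\mathcal{L}^N=\hat C_{N,m'}^{-1}=\hat C_{N,n-N+m}^{-1}$. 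Multiplying by the constant prefactor of $u_\rho$ produces total mass $1$.

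For the ``in particular'': for every $\mathbf{y}_0\in\Real^N$ the translate $u_\rho(\cdot-\mathbf{y}_0)$ also lies in $\mathcal{VT}_1^+(\rho)$, since all three conditions are translation-invariant (for $\tau$ because pointwise positive semidefiniteness of $\nabla^2_\Real\log(\cdot)$ is preserved). Taking $T=\rho$ in the definition of $\lambda_V^n[\Sigma]$ with these functions gives
$$
\lambda_V^n[\Sigma]\ \geq\ \sup_{\rho>0,\ \mathbf{y}_0\in\Real^N}\hat C_{N,n-N+m}\int_\Sigma \hat W_{n+m,\rho}^n(\mathbf{y}-\mathbf{y}_0)\,d\mathcal{H}^n\ =\ \hat C_{N,n-N+m}\,\hat C_{n,m}^{-1}\,\bar\lambda_{LY}^{n+2m}[\Sigma\times\Real^{2m}],
$$
the final equality being the identity for $\bar\lambda_{LY}^{n+2m}[\Sigma\times\Real^{2m}]$ in Lemma~\ref{IterateLem}. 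Letting $m\to\infty$ and using $\hat C_{N,n-N+m}\to1$, $\hat C_{n,m}\to1$ from Proposition~\ref{ConfVolCMentProp} together with $\bar\lambda_{LY}^{n+2m}[\Sigma\times\Real^{2m}]\to\bar\lambda^\infty_{LY}[\Sigma]$ from Proposition~\ref{StabConfVolMonDimProp} (valid also when the limit is $+\infty$) yields $\lambda_V^n[\Sigma]\geq\bar\lambda^\infty_{LY}[\Sigma]$.

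I do not expect a genuine obstacle here; the only delicate part is the constant bookkeeping in the mass computation — invoking Lemma~\ref{IterateLem} with the ambient dimension $N$ playing the role of its parameter ``$n$'' and $n-N+m$ playing the role of its ``$m$,'' and keeping the two families $\hat C_{N,\bullet}$ and $\hat C_{n,\bullet}$ straight. Everything else reduces to the scale- and translation-invariance of $\tau$ and to identities already established above.
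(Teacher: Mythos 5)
Your proof is correct and follows essentially the same route as the paper's: Lemma~\ref{LogConvexComputeLem} for the virtual time, the change of variables plus Lemma~\ref{IterateLem} applied to $\Sigma=\Real^N$ with parameters $(N,m')=(N,n-N+m)$ to compute the mass and identify the normalizing constant $\hat C_{N,n-N+m}$, and then Lemma~\ref{IterateLem} again together with the limits $\hat C_{N,n-N+m},\hat C_{n,m}\to 1$ from Proposition~\ref{ConfVolCMentProp} and Proposition~\ref{StabConfVolMonDimProp} to pass to $\bar\lambda_{LY}^\infty$. The constant bookkeeping, including the rewriting $\hat W^n_{n+m,\rho}=(4\pi\rho)^{\frac{N-n}{2}}\hat W^N_{N+m',\rho}$, matches the paper's computation exactly.
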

\begin{proof}
First observe that by Lemma \ref{LogConvexComputeLem}, $\tau(\hat{W}_{n+m, \rho}^n) =\rho$.  Hence, it suffices to calculate the mass of $\hat{W}_{n+m, \rho}^n$ and scale by it's inverse.  To that end, observe that $m+n \geq N\geq 1$ ensures $\hat{W}_{n+m, \rho}^n\in L^1(\Real^N)$.  By considering the change of variables $\mathbf{x}\mapsto \sqrt{\rho} \mathbf{x}$ one has
$$
\int_{\Real^N} \hat{W}_{n+m, \rho}^n d\mathcal{L}^N= \rho^{\frac{N-n}{2}} \int_{\Real^N}\hat{W}_{n+m, 1}^n  d\mathcal{L}^N.
$$
Define $\alpha_{n,m,N}\in (0, \infty)$ by
$$
\alpha_{n,m,N}^{-1} = \int_{\Real^N}\hat{W}_{n+m, 1}^n  d\mathcal{L}^N
$$
For,  $m'=n-N+m \geq 0$, it is clear that
$$\hat{W}_{n+m, 1}^n= (4\pi)^{\frac{N-n}{2}} \hat{W}_{N+m',1}^N.$$
Hence, for $m\geq N-n$, as $\bar{\lambda}_{LY}^{N+2m'}[\Real^N\times \Real^{2m'}]=1$, Lemma \ref{IterateLem} gives
$$
\int_{\Real^N} \hat{W}_{n+m, 1}^n= (4\pi)^{\frac{N-n}{2}} \int_{\Real^N}\hat{W}_{N+m',1}^N =  (4\pi)^{\frac{N-n}{2}} \hat{C}_{N, m'}^{-1}
$$
That is, for $m\geq N-n$,
$$
\alpha_{n,m,N}=  (4\pi)^{\frac{n-N}{2}} \hat{C}_{N, n-N+m} \mbox{ and } \alpha_{n,m, N} \hat{W}_{n+m,1}^n\in \mathcal{VT}_1^+(\rho).
$$

To show the first inequality for $\Sigma$ set
$$
\tilde{W}_{n+m,\rho}^n= \alpha_{n,m,N} \rho^{\frac{n-N}{2}} \hat{W}_{n+m, \rho}^n=(4\pi)^{\frac{n-N}{2}} \hat{C}_{N, n-N+m}\hat{W}_{n+m, \rho}^n.
$$
Clearly, along with any of its translates one has
$$
\tilde{W}_{n+m,\rho}^n\in \mathcal{VT}_1^+(\rho).
$$
Hence, the definition of virtual entropy gives
\begin{align*}
	\lambda_V^n[\Sigma] &\geq (4\pi \rho)^{\frac{N-n}{2}} \int_{\Sigma} \tilde{W}_{n+m,\rho}^n(\mathbf{x}-\mathbf{x}_0) d\mathcal{H}^n \\
	&= (4\pi )^{\frac{N-n}{2}}\alpha_{n,m,N} \hat{C}_{n,m}^{-1} \hat{C}_{n,m}\int_{\Sigma} \hat{W}_{n+m,\rho}^n(\mathbf{x}-\mathbf{x}_0) d\mathcal{H}^n.
\end{align*}
By Lemma \ref{IterateLem}, taking the supremum over $\rho>0$ and $\mathbf{x}_0\in \mathbb{R}^N$ yields
\begin{equation}\label{LVLYmEqn}
	\lambda_V^n[\Sigma]\geq \hat{C}_{N, n-N+m}  \hat{C}_{n,m}^{-1} \bar{\lambda}_{LY}^{n+2m}[\Sigma\times \Real^{2m}].
\end{equation}
This gives the first inequality for $\Sigma$.  Taking $m\to \infty$ and appealing to Proposition \ref{ConfVolCMentProp} gives the second one.
Note that plugging $\Sigma=\Real^n$ into the \eqref{LVLYmEqn} implies
$$
\hat{C}_{n,m}\geq \hat{C}_{N, n-N+m}.
$$
This gives an alternative way to conclude the second inequality.
\end{proof}

 Theorem \ref{CMLYVTIneqThm} is an immediate consequence of Propositions \ref{ConfVolCMentProp} and \ref{WeightInVTProp}.
\appendix

\section{Loxodromes} \label{LoxodromeSec}

For $\alpha\in [0, \infty)$ consider the rectifiable set:
$$
S_\alpha^0=\set{(e^t\cos \alpha t, e^t\sin \alpha t): t\in \Real}\subset \Real^2.
$$
This is a smooth curve away from $\mathbf{0}$.
When $\alpha>0$ this is a logarithmic spiral (i.e., a special case of a loxodrome or rhumb line) and for $\alpha= 0$ it is the non-negative part of the $x$-axis.    If $R_\theta: \Real^2 \to \Real^2$ is rotation counter clockwise by $\theta$, then we see that for $\alpha>0$, $S^\theta_\alpha=R_{\theta} (S_\alpha^0)= e^{-\alpha^{-1}\theta} \cdot S_\alpha^0$, i.e., scaling $S_\alpha^0$  about the origin is the same as rotating it.  Consider the doubled curve
$$
S_\alpha=S_\alpha^0\cup S_\alpha^\pi
$$
which is symmetric with respect to reflection through the origin.

\begin{prop}\label{LoxodromeConfLenProp}
For any $\alpha\geq 0$ one has,
	$$
	\lambda_{CM}^1[S_\alpha]=\bar{\lambda}_{LY}^\infty[S_\alpha]=\bar{\lambda}_{LY}^1[S_\alpha]=\sqrt{1+\alpha^2}.
	$$
\end{prop}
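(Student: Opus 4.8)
The plan is to reduce each of the three functionals to its value at weights centred at the origin by means of one elementary inequality adapted to the symmetry of $S_\alpha$, and then to evaluate the resulting one-variable integrals. I would first record two structural facts about $S_\alpha$. Writing $r=|\mathbf{x}|$, each of the branches $S_\alpha^0$ and $S_\alpha^\pi=-S_\alpha^0$ meets the circle $\{|\mathbf{x}|=r\}$ in exactly one point, and the two points are antipodal; along each branch $d\mathcal{H}^1=\sqrt{1+\alpha^2}\,dr$, since for $\gamma(t)=e^t(\cos\alpha t,\sin\alpha t)$ one has $|\gamma(t)|=e^t$ and $|\gamma'(t)|=e^t\sqrt{1+\alpha^2}$. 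Hence, letting $\mathbf{p}(r)$ be the point of $S_\alpha^0$ at radius $r$, for any Borel $g\ge 0$,
$$\int_{S_\alpha} g\,d\mathcal{H}^1=\sqrt{1+\alpha^2}\int_0^\infty\big(g(\mathbf{p}(r))+g(-\mathbf{p}(r))\big)\,dr .$$
The same two facts hold for the $x$-axis $\mathbb{R}^1\subset\mathbb{R}^2$ (the case $\alpha=0$), with $\sqrt{1+\alpha^2}$ replaced by $1$.

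The core is the following \emph{centring lemma}: if $w(\mathbf{x})=w_0(|\mathbf{x}|^2)$ with $w_0\colon[0,\infty)\to[0,\infty)$ convex and $u\mapsto w_0(u^2)$ in $L^1(\mathbb{R})$, then $\int_{S_\alpha}w(\mathbf{x}-\mathbf{y}_0)\,d\mathcal{H}^1\le\int_{S_\alpha}w(\mathbf{x})\,d\mathcal{H}^1$ for every $\mathbf{y}_0\in\mathbb{R}^2$. To prove it I fix $r$, set $b=|\mathbf{y}_0|$ and $D=|\mathbf{p}(r)-\mathbf{y}_0|^2$; then $|\mathbf{p}(r)+\mathbf{y}_0|^2=2(r^2+b^2)-D$ and $D\in[(r-b)^2,(r+b)^2]$, and $D\mapsto w_0(D)+w_0(2(r^2+b^2)-D)$ is convex and symmetric about $D=r^2+b^2$, so its maximum on that interval is attained at the endpoints, where (as $2(r^2+b^2)-(r-b)^2=(r+b)^2$) it equals $w_0((r-b)^2)+w_0((r+b)^2)$. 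Integrating this pointwise bound in $r$ and substituting $u=r-b$, respectively $u=r+b$, gives $\int_0^\infty\big(w_0((r-b)^2)+w_0((r+b)^2)\big)\,dr=2\int_0^\infty w_0(u^2)\,du$, which by the radial formula above equals $\int_{S_\alpha}w\,d\mathcal{H}^1/\sqrt{1+\alpha^2}$.

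Given the lemma, the three equalities are direct computations. Both $w=(4\pi\rho)^{-1/2}e^{-|\cdot|^2/4\rho}$ and $w=\WW{1}{\rho}$ have convex radial profiles, so in the suprema defining $\lambda_{CM}^1[S_\alpha]$ and $\bar{\lambda}_{LY}^1[S_\alpha]$ one may take the centre to be $\mathbf{0}$; the radial formula then gives $\int_{S_\alpha}e^{-|\mathbf{x}|^2/4\rho}\,d\mathcal{H}^1=2\sqrt{\pi\rho}\,\sqrt{1+\alpha^2}$ and $\int_{S_\alpha}\WW{1}{\rho}\,d\mathcal{H}^1=2\pi\sqrt{1+\alpha^2}$ (both independent of $\rho$), so $\lambda_{CM}^1[S_\alpha]=\bar{\lambda}_{LY}^1[S_\alpha]=\sqrt{1+\alpha^2}$. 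For $\bar{\lambda}_{LY}^\infty[S_\alpha]$, Lemma~\ref{IterateLem} rewrites $\int_{S_\alpha\times\mathbb{R}^{2m}}\WW{1+2m}{\rho}(\mathbf{x}-\mathbf{x}_0)\,d\mathcal{H}^{1+2m}$ as $C_{1,m}\rho^{-m}\int_{S_\alpha}\WW{1+m}{\rho}(\mathbf{y}-\mathbf{y}_0)\,d\mathcal{H}^1$ (with $\mathbf{x}_0=(\mathbf{y}_0,\mathbf{z}_0)$; the $\mathbf{z}_0$ drops out), and $\rho^{-m}\WW{1+m}{\rho}=\rho\big(1+\tfrac{\rho^2}{4}|\cdot|^2\big)^{-(1+m)}$ also has a convex radial profile, so again the supremum is at $\mathbf{y}_0=\mathbf{0}$. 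This expresses $\bar{\lambda}_{LY}^{1+2m}[S_\alpha\times\mathbb{R}^{2m}]$ as a constant depending only on $m$ times $\sqrt{1+\alpha^2}$; running the identical computation with $S_\alpha$ replaced by $\mathbb{R}^1$, for which $\bar{\lambda}_{LY}^{1+2m}[\mathbb{R}^1\times\mathbb{R}^{2m}]=\bar{\lambda}_{LY}^{1+2m}[\mathbb{R}^{1+2m}]=1$, forces that constant to be $1$, so $\bar{\lambda}_{LY}^{1+2m}[S_\alpha\times\mathbb{R}^{2m}]=\sqrt{1+\alpha^2}$ for all $m$ and hence $\bar{\lambda}_{LY}^\infty[S_\alpha]=\sqrt{1+\alpha^2}$.

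The only step with real content is the centring lemma — the observation that antipodal pairing on $S_\alpha$ together with convexity of the profile $w_0$ moves the translated integral down to the centred one with no loss; everything afterward is evaluation of Gaussian and Beta-type integrals and the bookkeeping already supplied by Lemma~\ref{IterateLem}. (One could alternatively deduce $\lambda_{CM}^1[S_\alpha]\le\bar{\lambda}_{LY}^\infty[S_\alpha]\le\lambda_{V}^{1}[S_\alpha]$ and $\bar{\lambda}_{LY}^1[S_\alpha]\le\bar{\lambda}_{LY}^\infty[S_\alpha]$ from Theorem~\ref{CMLYVTIneqThm} and Proposition~\ref{StabConfVolMonDimProp}, but this is unnecessary once the lemma is available, and one should merely note that for $\alpha=0$ the curve $S_0$ is the $x$-axis with the origin removed, so all formulas apply verbatim.)
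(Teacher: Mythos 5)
Your proof is correct, and while it rests on the same underlying mechanism as the paper's, the organization is genuinely different and cleaner. The common core is the antipodal pairing of $S_\alpha$ together with a convexity/majorization argument; but where the paper proves a specific pointwise inequality $(x-a)^m+(x+a)^m\leq(x-b)^m+(x+b)^m$ for the particular weights $\WW{1+m}{\rho}$, bounds only the stabilized quantities $\bar{\lambda}_{LY}^{1+2m}$, and then retrieves $\lambda_{CM}^1$ via the chain $\lambda_{CM}^1\le\bar{\lambda}_{LY}^\infty$ from Proposition~\ref{ConfVolCMentProp}, you abstract the mechanism into a single centring lemma (translated integral $\le$ centred integral whenever the radial profile $w_0$ is convex) and apply it uniformly to the Gaussian, to $\WW{1}{\rho}$, and to $\rho^{-m}\WW{1+m}{\rho}$, computing each quantity directly. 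This buys a shorter argument that treats all three functionals on an equal footing, avoids the scaling-equals-rotation reduction, and makes the convexity hypothesis (which is what the power inequality is secretly encoding) explicit. The bookkeeping all checks out: the parametrization gives $d\mathcal{H}^1=\sqrt{1+\alpha^2}\,dr$; the identity $|\mathbf{p}(r)-\mathbf{y}_0|^2+|\mathbf{p}(r)+\mathbf{y}_0|^2=2(r^2+b^2)$ together with $D\in[(r-b)^2,(r+b)^2]$ and convexity of $D\mapsto w_0(D)+w_0(2(r^2+b^2)-D)$ gives the pointwise bound; the change of variables $u=r\mp b$ collapses $\int_0^\infty\bigl(w_0((r-b)^2)+w_0((r+b)^2)\bigr)dr$ to $2\int_0^\infty w_0(u^2)\,du$; and the $\rho$-independence after centring, normalized against $S_0=\Real^1$, forces the constants to work out. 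Two small points worth being explicit about: the centring lemma needs $w_0$ convex on $[0,\infty)$ (not merely where the weights live), which holds for all three profiles; and for $\bar{\lambda}_{LY}^\infty$ you rely on Proposition~\ref{StabConfVolMonDimProp} to replace the $\liminf$ in its definition by a limit, which is fine since you show each term equals $\sqrt{1+\alpha^2}$ anyway.
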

\begin{rem}
  This shows the existence of multiplicity one curves  that have entropy any value larger than one.  In particular, it suggests some of the results of \cite{chenRigidityStabilitySubmanifolds2021} are optimal.
\end{rem}
\begin{proof}
When $\alpha=0$ this is elementary as $S_0$ is a line. In what follows we suppose $\alpha>0$.
In general,  the parameterization of $S_\alpha^0$ and a change of variables gives
$$
\int_{	S_\alpha} f(|\mathbf{x}|) d\mathcal{H}^1=2\int_{-\infty}^\infty f(e^r) \sqrt{1+\alpha^2}e^r dr=2\sqrt{1+\alpha^2} \int_0^\infty f(u) du.
$$
As scaling $S_\alpha$ is the same as rotating it around the origin, it follows from the change of variables formula and rotational symmetry of the weights that
$$
	\lambda_{CM}^1[S_\alpha]=\sup_{\mathbf{x}_0\in \Real^2} (4\pi)^{-\frac{1}{2}} \int_{S_\alpha} e^{-\frac{|\mathbf{x}-\mathbf{x}_0|^2}{4}} d\mathcal{H}^1.
$$
Hence,
$$
	\lambda_{CM}^1[S_\alpha]\geq  (4\pi)^{-\frac{1}{2}} \int_{S_\alpha} e^{-\frac{|\mathbf{x}|^2}{4}} d\mathcal{H}^1= 2 (4\pi)^{-\frac{1}{2}} \sqrt{1+\alpha^2} \int_0^\infty  e^{-\frac{u^2}{4}} du=\sqrt{1+\alpha^2}.
$$
For the same reason,
$$
	\bar{\lambda}_{LY}^\infty[S_\alpha]\geq 	\bar{\lambda}_{LY}^1[S_\alpha]=(2\pi)^{-1}\sup_{\mathbf{x}_0\in \Real^2} \int_{S_\alpha} \WW{1}{1}(\mathbf{x}-\mathbf{x}_0) d\mathcal{H}^1 \geq \sqrt{1+\alpha^2}.
$$

In order to establish the claim, let us suppose $\mathbf{x}_0=(x_0,y_0)\in\mathbb{S}^1\subset \Real^2$ is a fixed unit vector. 
For $t\in \Real$ and $m\geq 0$ set
$$
L_m^{\alpha, \mathbf{x}_0}(t)= 	\int_{S_\alpha} \WW{ m+1}{1}(\mathbf{x}-t\mathbf{x}_0)d\mathcal{H}^1.
$$
Up to a constant factor, these are the integrals that compute $\bar{\lambda}_{LY}^{1+2m}[S_\alpha\times \Real^{2m}]$.

Using the parameterization of $S_\alpha$ we have
\begin{align*}
L_{m-1}^{\alpha, \mathbf{x}_0}(t)&= \int_{S_\alpha} \frac{1}{(1+\frac{1}{4} |\mathbf{x}-t\mathbf{x}_0|^2)^m} d\mathcal{H}^1\\
&=\int_{-\infty}^\infty   \frac{\sqrt{1+\alpha^2}e^r}{(1+\frac{1}{4} ((e^r \cos(\alpha r)-t x_0)^2+ (e^r \sin (\alpha r)-ty_0)^2))^m} dr\\
&+\int_{-\infty}^\infty   \frac{\sqrt{1+\alpha^2} e^r}{(1+\frac{1}{4} ((e^r\cos(\alpha r)+t x_0)^2+ (e^r \sin (\alpha r)+ty_0)^2))^m} dr\\
&=\sqrt{1+\alpha^2}\int_{-\infty}^\infty \frac{e^r}{(I_1(r)+I_2(r))^m}+\frac{e^r}{(I_1(r)-I_2(r))^m}dr
\end{align*}
where here
$$I_1(r)= 1+ \frac{1}{4} (e^{2r} +t^2) \geq 1 \mbox{ and } I_2(r) =-\frac{1}{2} tx_0 e^r\cos(\alpha r) -\frac{1}{2} ty_0 e^r\sin(\alpha r).$$
By the Cauchy-Schwarz and absorbing inequality, 
$$
I_2(r)^2\leq J_2(r)^2= \frac{1}{4} t^2 e^{2r}\leq \frac{1}{16}( t^2 +e^{2r})^2 \leq I_1(r)^2.
$$
The following inequality holds: for $a^2\leq b^2$, $ x\geq 0$ and an integer $m\geq 1$,
$$
(x-a)^m+(x+a)^m\leq (x-b)^m+(x+b)^m.
$$
Using this we obtain the estimate
\begin{align*}
\frac{L_{m-1}^{\alpha, \mathbf{x}_0}(t)}{\sqrt{1+\alpha^2}}&=\int_{-\infty}^\infty \frac{(I_1(r)-I_2(r))^m+(I_1(r)+I_2(r))^m}{(I_1(r)^2-I_2(r)^2)^m} e^r dr\\
&\leq \int_{-\infty}^\infty \frac{(I_1(r)-J_2(r))^m+(I_1(r)+J_2(r))^m}{(I_1(r)^2-J_2(r)^2)^m} e^r dr\\
&=\int_{-\infty}^\infty  \frac{e^r}{ (I_1(r)+J_2(r))^m}+ \frac{e^r}{ (I_1(r)-J_2(r))^m} dr\\
&=\frac{L_{m-1}^{0, \mathbf{e}_1}(t)}{\sqrt{1+\alpha^2}}=\frac{L_{m-1}^{0, \mathbf{e}_1}(0)}{\sqrt{1+\alpha^2}}
\end{align*}
Where here the final equality follows from the observation that last line is exactly what would be obtained from evaluating on $S_0$, i.e.,  the $x$-axis, and with $\mathbf{x}_0=\mathbf{e}_1=(1,0)$.  In particular, the value is independent of $t$.  We conclude that for all $m\geq 0$, 
$$
\bar{\lambda}_{LY}^{1+2m}[S_\alpha \times \Real^{2m}]\leq \sqrt{1+\alpha^2} \bar{\lambda}_{LY}^{1+2m}[S_0 \times \Real^{2m}]= \sqrt{1+\alpha^2}.
$$
Combined with the lower bound already established this verifies that
$$
 \sqrt{1+\alpha^2}=\bar{\lambda}_{LY}^\infty[S_\alpha]=\bar{\lambda}_{LY}^1[S_\alpha].
 $$
 
Finally, by Proposition \ref{ConfVolCMentProp} and the previously established lower bound
$$
\sqrt{1+\alpha^2}\leq \lambda_{CM}^1[S_\alpha]\leq \bar{\lambda}_{LY}^\infty[S_\alpha]= \sqrt{1+\alpha^2},
$$
which completes the proof.
\end{proof}

\section{Further Questions and Observations}
First, we ask whether the inequalities between Colding-Minicozzi entropy, stable conformal volume and virtual entropy are ever strict when $\Sigma$ is not a self-shrinker.
\begin{ques}
	Is there a $n$-dimensional submanifold, possibly singular,  $\Sigma \subset \Real^N$, with $\lambda_{CM}^n[\Sigma]<\lambda_V^n[\Sigma]$?  More generally, can one ever have $\lambda_{CM}^n[\Sigma]<\bar{\lambda}_{LY}^\infty[\Sigma]<\lambda_V^n[\Sigma]$? If $\lambda_{CM}^n[\Sigma]<\bar{\lambda}_{LY}^\infty[\Sigma]$ can occur,  may one also have $\lambda_{CM}^n[\Sigma]<\bar{\lambda}_{LY}^n[\Sigma]$?
\end{ques} 
	It would be particularly interesting to compute $\lambda_{V}^1[S_\alpha]$ for  the $S_{\alpha}$ from Appendix \ref{LoxodromeSec}.

\begin{ques}
	What is the lowest entropy self-shrinking $\mathbb{RP}^2$ in $\Real^N$?
\end{ques}
 When $N=3$, any immersed $\mathbb{RP}^2$ must have a triple point \cite{banchoffTriplePointsSingularities1974}, and so entropy at least three.  There are no candidate self-shrinkers in this setting -- see however \cite{whiteBoundarySingularitiesMean2022} where the existence of a shrinking M\"{o}bius band bounded by a straight line in $\Real^3$ is shown. For $N\geq 5$, it follows from \cite[Theorem 1.1]{andrewsStabilitySelfShrinkingSolutions2014} that $\Sigma_{\mathbb{RP}^2}$ is (Colding-Minicozzi) entropy unstable, suggesting there should be a lower entropy unoriented shrinker.  Likewise, as $ \Sigma_{\mathbb{RP}^2}$ is unstable as a minimal surface in $\mathbb{S}^4$, there is a non-constant flow out of it that remains in the sphere.  As observed in \cite{zhuGeometricVariationalProblems2018} such a flow has decreasing Colding-Minicozzi entropy when viewed in the ambient $\Real^5$.  As the flow is constrained to lie in $\mathbb{S}^4$ and must become singular, this suggests the existence of an unoriented self-shrinker of lower entropy in $\mathbb{R}^4$.  However, the fact that $\lambda^2_{CM}[\Sigma_{\mathbb{RP}^2}]>2$ introduces significant technical issues.

In their proof of the Willmore conjecture \cite{MarquesNeves}, Marques-Neves showed that any positive genus surface in $\mathbb{S}^3$ sits inside a five parameter family and at least one member of the family has area at least that of the Clifford torus.  Together with Theorem \ref{LYCMIneqThm},  this suggests:
\begin{conj}
	Let $\Sigma\subset \Real^3$ be a closed self-shrinker of positive genus, then
	$$
	1.57 \approx \frac{\pi}{2} =\bar{\lambda}_{LY}^2[\Sigma_C]\leq    \lambda_{CM}^2[\Sigma] 
	$$
	where here $\Sigma_C\subset \mathbb{S}^3$ is the Clifford torus.
\end{conj}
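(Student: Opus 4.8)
The plan is to reduce the estimate to a lower bound for the Li--Yau conformal volume and then to invoke the min--max technology behind the Marques--Neves proof of the Willmore conjecture. Since a closed self-shrinker is in particular (up to translation and dilation) a self-shrinker, Theorem \ref{LYCMIneqThm} applies and gives $\bar{\lambda}_{LY}^2[\Sigma]\leq \lambda_{CM}^2[\Sigma]$, so it suffices to prove that every closed surface $\Sigma\subset \Real^3$ of positive genus satisfies $\bar{\lambda}_{LY}^2[\Sigma]\geq \frac{\pi}{2}$. Unwinding the normalization, $\bar{\lambda}_{LY}^2[\Sigma]=(4\pi)^{-1}V_c(3,\mathcal{S}(\Sigma))$, so this is equivalent to showing that the Li--Yau $3$-conformal volume of $\widehat\Sigma:=\mathcal{S}(\Sigma)\subset\mathbb{S}^3$ is at least $2\pi^2$, the area of the Clifford torus. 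Here $\mathcal{S}$ is a conformal diffeomorphism of $\Real^3$ onto $\mathbb{S}^3$ minus a point, so $\widehat\Sigma$ is an embedded closed surface in $\mathbb{S}^3$ of the same positive genus, and $\Sigma_C$, being minimal, has $V_c(3,\Sigma_C)=|\Sigma_C|=2\pi^2$, so the target bound is sharp; note that after this reduction the shrinker hypothesis is no longer used.

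To bound $V_c(3,\widehat\Sigma)$ from below I would attach to $\widehat\Sigma$ the Marques--Neves canonical family: a continuous $5$-parameter family $\{\widehat\Sigma_{(v,t)}\}$ of mod-$2$ cycles in $\mathbb{S}^3$, with $v$ in the closed $4$-ball (its interior identified with the conformal group of $\mathbb{S}^3$) and $t\in[-\pi,\pi]$, where $\widehat\Sigma_{(v,0)}$ is the conformal image of $\widehat\Sigma$ under $v$, $\widehat\Sigma_{(v,t)}$ for $t\neq 0$ is the boundary of the signed distance-$t$ neighborhood of the region bounded by $\widehat\Sigma_{(v,0)}$, and the parameters degenerate at the boundary of the ball to great spheres of multiplicity two. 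Two ingredients are then needed. First, that this family is a genuine $5$-sweepout whose min--max width is at least $2\pi^2$; this is exactly the geometric measure theory argument of \cite{MarquesNeves}, combining the topological nontriviality of the family, a topological argument ruling out great spheres when the genus is positive, and a theorem of Urbano (a minimal surface in $\mathbb{S}^3$ of index at most $5$ is a great sphere or the Clifford torus). Second, and crucially, that $\sup_{(v,t)}|\widehat\Sigma_{(v,t)}|\leq V_c(3,\widehat\Sigma)$. Granting both, the min--max width $w$ satisfies $2\pi^2\leq w\leq \sup_{(v,t)}|\widehat\Sigma_{(v,t)}|\leq V_c(3,\widehat\Sigma)$, finishing the argument.

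The main obstacle is the second ingredient. For $t=0$ the members are literally conformal images of $\widehat\Sigma$ and so have area $\leq V_c(3,\widehat\Sigma)$ by definition, but for $t\neq 0$ they are modified surfaces, and the area estimate actually available in \cite{MarquesNeves} controls them only by the Willmore energy $W(\widehat\Sigma)$, which by the Li--Yau inequality satisfies $W(\widehat\Sigma)\geq V_c(3,\widehat\Sigma)$ --- the wrong direction. Replacing $W$ by $V_c$ here is precisely the ``conformal volume'' strengthening of the Willmore conjecture, namely $V_c(3,\cdot)\geq 2\pi^2$ for positive genus, which does not appear to be known in general. Two routes around this seem worth attempting. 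One is analytic: refine the surgery defining $\widehat\Sigma_{(v,t)}$ (or alter the sweepout) so that every member has area within $o(1)$ of $\sup_v|\widehat\Sigma_{(v,0)}|=V_c(3,\widehat\Sigma)$; heuristically the distance-$t$ surfaces should only exceed the conformal images near the boundary of the parameter ball, where they are close to single great spheres of area $4\pi<2\pi^2$. The other is to bypass conformal volume entirely and use the shrinker hypothesis, for instance combining the standard fact that $\lambda_{CM}^2[\Sigma]=(4\pi)^{-1}\int_\Sigma e^{-|\mathbf{x}|^2/4}\,d\mathcal{H}^2$ for a closed self-shrinker with Colding--Minicozzi entropy stability and the classification of low-entropy self-shrinking surfaces in $\Real^3$ to push the known lower bound on $\lambda_{CM}^2[\Sigma]$ (roughly $1.52$, the entropy of the shrinking cylinder) up to $\frac{\pi}{2}\approx 1.57$. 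In either case the analytic heart is a sharp area bound for a min--max sweepout; that estimate, and not the reduction via Theorem \ref{LYCMIneqThm}, is where I expect the genuine difficulty to lie.
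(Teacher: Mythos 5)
This statement is posed in the paper as a \emph{conjecture}; the paper contains no proof of it. The preceding text in the paper says only that the Marques--Neves five-parameter sweepout together with Theorem \ref{LYCMIneqThm} ``suggests'' the inequality, which is exactly the reduction you carry out: $\bar{\lambda}_{LY}^2[\Sigma]\leq \lambda_{CM}^2[\Sigma]$ holds for self-shrinkers, so the conjecture would follow from the purely conformal-geometric statement that every closed positive-genus surface in $\mathbb{S}^3$ has Li--Yau $3$-conformal volume at least $2\pi^2$. Your unwinding of the normalization (identifying $\frac{\pi}{2}$ with $(4\pi)^{-1}\cdot 2\pi^2$) is correct.

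You have also correctly identified the gap that keeps this a conjecture. The Marques--Neves area bound for the canonical family $\{\widehat\Sigma_{(v,t)}\}$ controls $\sup_{(v,t)}|\widehat\Sigma_{(v,t)}|$ by the Willmore energy $W(\widehat\Sigma)$, and Li--Yau gives $V_c(3,\widehat\Sigma)\leq W(\widehat\Sigma)$ --- as you say, the wrong direction. A genuine proof would require either a sweepout whose areas are bounded by $V_c$ rather than by $W$ (a conformal-volume strengthening of the Willmore inequality, which is open), or a different mechanism that uses the self-shrinker hypothesis beyond what Theorem \ref{LYCMIneqThm} extracts. Your ``second route'' --- pushing the cylinder lower bound $\lambda_{CM}^2[\Sigma]\gtrsim 1.52$ up to $\pi/2$ via entropy stability and the low-entropy classification --- is a reasonable thing to try, but as written it is only a suggestion, not an argument; the known stability/classification results give $1.52$, not $1.57$. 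In short: your reduction matches the paper's intent, your diagnosis of the obstruction is accurate, and you should not claim a proof because neither you nor the paper has one.
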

It was shown in \cite{BernsteinWang2} that such a $\Sigma$ satisfies the bound
$$
1.52 \approx \lambda_{CM}^2[\mathbb{S}^1\times \Real]\leq \lambda_{CM}^2[\Sigma]. 
$$
In \cite{AngenentEntropy}, numerical methods established that there a rotationally symmetric genus one self-shrinker, i.e., an Angenent torus \cite{Angenent}, $\Sigma_A\subset \Real^3$,  whose entropy satisfies 
$$
\lambda_{CM}^2[\Sigma_A]\approx 1.852.
$$
Recently,  Chu-Sun \cite{ChuSun} have shown that no Angenent torus can be the lowest entropy positive genus shrinker.  Their work suggests that the lowest entropy is achieved by a non-compact shrinker of genus one.

Finally, by using the heat kernel on hyperbolic space, a notion of Colding-Minicozzi entropy has been extended to some spaces of non-positive curvature \cite{bernsteinColdingMinicozziEntropy2020, BernsteinBhattacharyaCartan, bernsteinMinimalSurfacesColdingMinicozzi2024, zhuGeometricVariationalProblems2018}. 
\begin{ques}
	Is there a useful notion of virtual entropy in other ambient Riemannian manifolds -- in particular those of non-positive curvature?
\end{ques}

\bibliographystyle{hamsabbrv}
\bibliography{Library}

\providecommand{\bysame}{\leavevmode\hbox to3em{\hrulefill}\thinspace}
\providecommand{\MR}{\relax\ifhmode\unskip\space\fi MR }
\providecommand{\MRhref}[2]{%
  \href{http://www.ams.org/mathscinet-getitem?mr=#1}{#2}
}
\providecommand{\href}[2]{#2}
\begin{thebibliography}{10}

\bibitem{andrewsStabilitySelfShrinkingSolutions2014}
B.~Andrews, H.~Li, and Y.~Wei, \emph{{$\mathcal{F}$}-{S}tability for
  {Self-Shrinking Solutions} to {Mean Curvature Flow}}, Asian Journal of
  Mathematics \textbf{18} (2014), no.~5, 757--778.

\bibitem{Angenent}
S.~B. Angenent, \emph{Shrinking doughnuts}, Nonlinear Diffusion Equations and
  Their Equilibrium States, 3 ({{Gregynog}}, 1989), Progr. {{Nonlinear}}
  Differential Equations Appl., vol.~7, Birkh{\"a}user Boston, Boston, MA,
  1992, pp.~21--38.

\bibitem{banchoffTriplePointsSingularities1974}
T.~Banchoff, \emph{Triple {{Points}} and {{Singularities}} of {{Projections}}
  of {{Smoothly Immersed Surfaces}}}, Transactions of the American Mathematical
  Society \textbf{46} (1974), no.~3, 402--406.

\bibitem{AngenentEntropy}
Y.~I. {Berchenko-Kogan}, \emph{The entropy of the {{Angenent}} torus is
  approximately 1.85122}, Journal of Experimental Mathematics \textbf{30}
  (2021), no.~4, 587--594.

\bibitem{bernsteinColdingMinicozziEntropy2020}
J.~Bernstein, \emph{Colding {{Minicozzi Entropy}} in {{Hyperbolic Space}}},
  Nonlinear Anal. \textbf{210} (2020), 112401.

\bibitem{BernsteinRigidity}
\bysame, \emph{Rigidity properties of {{C}}olding-{{M}}inicozzi entropies},
  Advanced Nonlinear Studies \textbf{24} (2024), no.~1, 155--166.

\bibitem{BernsteinBhattacharyaCartan}
J.~Bernstein and A.~Bhattacharya, \emph{{C}olding-{M}inicozzi entropies in
  {C}artan-{H}adamard manifolds}, 2022.

\bibitem{bernsteinMinimalSurfacesColdingMinicozzi2024}
\bysame, \emph{Minimal surfaces and {{Colding-Minicozzi}} entropy in complex
  hyperbolic space}, Geometriae Dedicata \textbf{218} (2024), Article 61.

\bibitem{BernsteinWang1}
J.~Bernstein and L.~Wang, \emph{A sharp lower bound for the entropy of closed
  hypersurfaces up to dimension six}, Invent. Math. \textbf{206} (2016), no.~3,
  601--627.

\bibitem{BernsteinWang2}
\bysame, \emph{A topological property of asymptotically conical self-shrinkers
  of small entropy}, Duke Math J. \textbf{166} (2017), no.~3, 403--435.

\bibitem{BernsteinWang3}
\bysame, \emph{Topology of closed hypersurfaces of small entropy}, Geom. Topol.
  \textbf{22} (2018), no.~2, 1109--1141.

\bibitem{BWIsotopy}
\bysame, \emph{Closed hypersurfaces of low entropy in {{R}}{$^4$} are
  isotopically trivial}, Duke Math. J. \textbf{171} (2022), no.~7, 1531--1558.

\bibitem{BWDensity}
\bysame, \emph{Lower bounds on density for topologically nontrivial minimal
  cones up to dimension six},  (2024).

\bibitem{bryantSurfacesConformalGeometry1988}
R.~L. Bryant, \emph{Surfaces in conformal geometry}, Proceedings of
  {{Symposia}} in {{Pure Mathematics}} (R.~Wells, ed.), vol.~48, {American
  Mathematical Society}, {Providence, Rhode Island}, 1988, pp.~227--240.

\bibitem{bryantConformalVolume2tori2015}
\bysame, \emph{On the conformal volume of 2-tori}, July 2015.

\bibitem{caoMatrixLiYauHamiltonEstimates2005}
H.-D. Cao and L.~Ni, \emph{Matrix {{Li-Yau-Hamilton}} estimates for the heat
  equation on {{K}}\"{a}hler manifolds}, Math. Ann. \textbf{331} (2005), no.~4,
  795--807.

\bibitem{chenRigidityStabilitySubmanifolds2021}
L.~Chen, \emph{Rigidity and stability of submanifolds with entropy close to
  one}, Geom Dedicata \textbf{215} (2021), no.~1, 133--145.

\bibitem{CCMS}
O.~Chodosh, K.~Choi, C.~Mantoulidis, and F.~Schulze, \emph{Mean curvature flow
  with generic low-entropy initial data}, 2021.

\bibitem{CMS}
O.~Chodosh, C.~Mantoulidis, and F.~Schulze, \emph{Mean curvature flow with
  generic low-entropy initial data {II}}, 2023.

\bibitem{ChuSun}
A.~C.-P. Chu and A.~Sun, \emph{Genus one singularities in mean curvature flow},
  2023.

\bibitem{CIMW}
T.~H. Colding, T.~Ilmanen, W.~P. Minicozzi~II, and B.~White, \emph{The round
  sphere minimizes entropy among closed self-shrinkers}, J. Differential Geom.
  \textbf{95} (2013), no.~1, 53--69.

\bibitem{Coldinga}
T.~H. Colding and W.~P. Minicozzi~II, \emph{Generic mean curvature flow {{I}};
  generic singularities}, Ann. of Math. (2) \textbf{175} (2012), no.~2,
  755--833.

\bibitem{coldingEntropyCodimensionBounds2019}
\bysame, \emph{Entropy and codimension bounds for generic singularities}, July
  2019.

\bibitem{coldingComplexityParabolicSystems2020}
\bysame, \emph{Complexity of parabolic systems}, Publ.math.IHES \textbf{132}
  (2020), no.~1, 83--135.

\bibitem{Dodziuk}
{Dodziuk}, \emph{Maximum principle for parabolic inequalities and the heat flow
  on open manifolds}, Indiana Univ. Math. J. \textbf{32} (1983), no.~5,
  703--716.

\bibitem{donnellyUniquenessPositiveSolutions1987}
H.~Donnelly, \emph{Uniqueness of positive solutions of the heat equation},
  Proceedings of the American Mathematical Society \textbf{99} (1987), no.~2,
  353--356.

\bibitem{Ecker2001}
K.~Ecker, \emph{Lectures on {{Regularity}} for {{Mean Curvature Flow}}}, vol.
  154, 2001.

\bibitem{gromovFillingRiemannianManifolds1983}
M.~Gromov, \emph{Filling {{Riemannian}} manifolds}, J. Differential Geom.
  \textbf{18} (1983), no.~1, 1--147.

\bibitem{hamiltonMatricHarnackEstimate1993}
R.~S. Hamilton, \emph{A {{Matric Harnack Estimate}} for the {{Heat Equation}}},
  Communications in Analysis and Geometry \textbf{1} (1993), no.~1, 113--126.

\bibitem{hamiltonMonotonicityFormulasParabolic1993}
\bysame, \emph{Monotonicity formulas for parabolic flows on manifolds},
  Communications in Analysis and Geometry \textbf{1} (1993), no.~1, 127--137.

\bibitem{HuiskenMon}
G.~Huisken, \emph{Asymptotic behaviour for singularities of the mean curvature
  flow}, J. Differential Geom. \textbf{31} (1990), no.~1, 285--299.

\bibitem{liNewConformalInvariant1982a}
P.~Li and S.-T. Yau, \emph{A new conformal invariant and its applications to
  the {{Willmore}} conjecture and the first eigenvalue of compact surfaces},
  Invent Math \textbf{69} (1982), no.~2, 269--291.

\bibitem{liParabolicKernelSchr6dinger1986}
\bysame, \emph{On the parabolic kernel of the {{S}}chr\"{o}dinger operator},
  Acta Math. \textbf{156} (1986), 53--201.

\bibitem{magman08}
A.~Magni and C.~Mantegazza, \emph{Some remarks on {H}uisken's monotonicity
  formula for mean curvature flow}, in ``Singularities in Nonlinear Evolution
  Phenomena and Applications" (M. Novaga and G. Orlandi eds.), CRM Series of
  Center ``Ennio De Giorgi", Pisa (2009), 157--169.

\bibitem{MarquesNeves}
F.~C. Marques and A.~Neves, \emph{Min-max theory and the {{Willmore}}
  conjecture}, Ann. of Math. (2) \textbf{179} (2014), no.~2, 683--782.

\bibitem{montielMinimalImmersionsSurfaces1986}
S.~Montiel and A.~Ros, \emph{Minimal immersions of surfaces by the first
  {{Eigenfunctions}} and conformal area}, Invent Math \textbf{83} (1986),
  no.~1, 153--166.

\bibitem{vazquezAsymptoticBehaviourMethods2018}
J.~L. V{\'a}zquez, \emph{Asymptotic behaviour methods for the {{Heat
  Equation}}. {{Convergence}} to the {{Gaussian}}}, November 2018.

\bibitem{whiteBoundarySingularitiesMean2022}
B.~White, \emph{Boundary singularities in mean curvature flow and total
  curvature of minimal surface boundaries}, Comment. Math. Helv. \textbf{97}
  (2022), no.~4, 669--689.

\bibitem{zhuGeometricVariationalProblems2018}
J.~Zhu, \emph{Geometric {{Variational Problems}} for {{Mean Curvature}}}, Ph.D.
  thesis, Harvard, Cambridge, MA, 2018.

\bibitem{JZhu}
\bysame, \emph{On the entropy of closed hypersurfaces and singular
  self-shrinkers}, J. Differential Geom. \textbf{114} (2020), no.~3, 551--593.

\end{thebibliography}
\end{document}